\documentclass[11pt,reqno]{amsart}

\usepackage{amsmath,amsthm,amssymb,thmtools,tikz-cd,comment,hyperref,cleveref,nicematrix,enumitem,breqn,float,caption,kotex,graphicx,todonotes}

\allowdisplaybreaks
\theoremstyle{plain}
\newtheorem{theorem}{Theorem}[section]
\newtheorem{lemma}[theorem]{Lemma}
\newtheorem{proposition}[theorem]{Proposition}
\newtheorem{corollary}[theorem]{Corollary}
\newtheorem{thmalphabetintro}{Theorem}

\newtheorem{thmalphabetmaintext}{Theorem}

\newtheorem{coralphabetintro}[thmalphabetintro]{Corollary}

\newtheorem{coralphabetmaintext}[thmalphabetmaintext]{Corollary}

\theoremstyle{definition}
\newtheorem{definition}[theorem]{Definition}
\newtheorem{example}[theorem]{Example}

\newtheorem{remark}[theorem]{Remark}

\newtheorem{problem}[theorem]{Problem}
\newtheorem{open problem}[theorem]{Open Problem}
\newtheorem{open question}[theorem]{Open question}
\newtheorem{remark and notation}[theorem]{Remark and Notation}

\def \p {\mathbb{P}}
\def \g {\mathbb{G}}
\def \z {\mathbb{Z}}

\def \c {\mathbb{C}}

\def \N {\textnormal{\bf{N}}}
\def \K {\textnormal{\bf{K}}}

\def \sing {\textnormal{sing}}

\def \leng {\operatorname{length}}
\def \supp {\operatorname{Supp}}

\def \tan {\operatorname{Tan}}
\def \bl {\operatorname{Bl}}
\def \rank {\operatorname{rank}}
\def \proj {\operatorname{Proj}}
\def \codim {\operatorname{codim}}
\def \depth {\operatorname{depth}}
\def \tor {\operatorname{Tor}}

\def \Span {\operatorname{Span}}
\def \maxdeg {\operatorname{maxdeg}}

\def \depth {\operatorname{depth}}
\def \seg {\operatorname{Seg}}
\def \Sym {\operatorname{Sym}}

\def \wt {\widetilde}

\title[Linear syzygies and multisecants]{Linear syzygies and linear subspaces whose lines are multisecant}

\author{Jong In Han}
\address{Jong In Han, School of Mathematics, Korea Institute for Advanced Study (KIAS), 85 Hoegi-ro, Dongdaemun-gu, Seoul, Republic of Korea}
\email{jihan09@kias.re.kr}

\author{Sijong Kwak}
\address{Sijong Kwak, Department of Mathematical Sciences, Korea Advanced Institute of Science and Technology (KAIST), 291, Daehak-ro, Yuseong-gu, Daejeon, Republic of Korea} 
\email{sjkwak@kaist.ac.kr}

\subjclass[2020]{Primary: 14N05; Secondary: 13D02, 14N07}
\keywords{multisecant lines, syzygies, graded Betti numbers, secant varieties}
\thanks{}

\begin{document}
	\begin{abstract}
		The locus $S_d(X)$ of $d$-secant lines to a projective variety $X$ plays an important role in the study of projective varieties via projections \cite{MR894589,MR1620706,MR2601633}. For a projective scheme $X\subseteq\mathbb{P}^r$ satisfying $\textbf{N}_{d,2}$, we show that $S_d(X)\cup X$ is cut out set-theoretically by the $r$-minors of a matrix $M$ constructed from $d$-forms scheme-theoretically defining $X$ and their linear syzygies. In particular, $S_d(X)=\mathbb{P}^r$ if and only if every $r$-minor of $M$ vanishes. Using this, we find a singular threefold $X\subseteq\mathbb{P}^5$ with $S_4(X)\ne \mathbb{P}^5$ and $(I_X)_3=0$, and a normal fourfold $X\subseteq\mathbb{P}^7$ with $S_3(X)\ne \mathbb{P}^7$ and $(I_X)_2=0$. The nonexistence of such varieties in the smooth case is an open question raised by the second author and by Gruson--Peskine. Next, we consider the locus $S_{k,d}(X)$ of $k$-planes $L$ such that $L\cap X$ contains a hypersurface of degree $\ge d$ in $L$ so that $S_{1,d}(X)=S_d(X)$. The locus $S_{k,d}(X)\cup X$ is set-theoretically defined by the $(r+1-k)$-minors of $M$. As a result, we obtain determinantal equations vanishing on the $(q+1)$-secant variety $\sigma_{q+1}X$ from the equations of $\sigma_qX$ and their linear syzygies when $\sigma_qX$ satisfies $\textbf{N}_{q+1,2}$. Finally, we extend the $d$-secant lemma for lines to the locus $S_{k,d}(X)$ of $k$-planes. This yields a lower bound on the number of linear syzygies.
		%
		%
		% abstract without \cite
		%The locus $S_d(X)$ of $d$-secant lines to a projective variety $X$ plays an important role in the study of projective varieties via projections (Lazarsfeld 1987, Kwak 1998, Beheshti-Eisenbud 2010). For a projective scheme $X\subseteq\mathbb{P}^r$ satisfying $\textbf{N}_{d,2}$, we show that $S_d(X)\cup X$ is cut out set-theoretically by the $r$-minors of a matrix $M$ constructed from $d$-forms scheme-theoretically defining $X$ and their linear syzygies. In particular, $S_d(X)=\mathbb{P}^r$ if and only if every $r$-minor of $M$ vanishes. Using this, we find a singular threefold $X\subseteq\mathbb{P}^5$ with $S_4(X)\ne \mathbb{P}^5$ and $(I_X)_3=0$, and a normal fourfold $X\subseteq\mathbb{P}^7$ with $S_3(X)\ne \mathbb{P}^7$ and $(I_X)_2=0$. The nonexistence of such varieties in the smooth case is an open question raised by the second author and by Gruson-Peskine. Next, we consider the locus $S_{k,d}(X)$ of $k$-planes $L$ such that $L\cap X$ contains a hypersurface of degree $\ge d$ in $L$ so that $S_{1,d}(X)=S_d(X)$. The locus $S_{k,d}(X)\cup X$ is set-theoretically defined by the $(r+1-k)$-minors of $M$. As a result, we obtain determinantal equations vanishing on the $(q+1)$-secant variety $\sigma_{q+1}X$ from the equations of $\sigma_qX$ and their linear syzygies when $\sigma_qX$ satisfies $\textbf{N}_{q+1,2}$. Finally, we extend the $d$-secant lemma for lines to the locus $S_{k,d}(X)$ of $k$-planes. This yields a lower bound on the number of linear syzygies.
	\end{abstract}
	\maketitle
	
	\section{Introduction}\label{sec_introduction}
	
	Let $X\subseteq\p^r=\proj \c[x_0,\dots,x_r]$ be a nondegenerate smooth projective variety over $\c$ of dimension $n$ with the homogeneous ideal $I_X$ and the homogeneous coordinate ring $S_X$. Then a line $l\subseteq\p^r$ is called a $d$-secant line of $X$ if \[\leng(l\cap X)\geq d.\] We denote the locus of $d$-secant lines of $X$ as $S_d(X)$.
	We also regard lines lying in $X$ as $d$-secant lines.
	Classically, the trisecant lemma states that for any nondegenerate smooth projective curve $C\subseteq\p^r$ ($r\geq 3$), it holds that
	\[
		\dim S_3(C)\le 2.
	\]
	In 1991, Ran \cite{MR1123373} generalized the trisecant lemma to the (dimension$+2$)-secant lemma: for any smooth $n$-fold $X$, it holds that
	\[
		\dim S_{n+2}(X)\le n+1.
	\]
	In 2010, Beheshti and Eisenbud \cite{MR2601633} generalized it further to the $d$-secant lemma: for any smooth $n$-fold $X$, it holds that
	\[
		\dim S_{d}(X)\le n+1+\dfrac{n}{d-1}.
	\]
	In 2013, Gruson and Peskine \cite[p. 557]{MR3024093} gave another proof of this using the aligned Hilbert scheme theorem, and in 2017, Bastianelli, De Poi, Ein, Lazarsfeld, and Ullery \cite{MR3705293} gave another proof of this.
	The statement proved in \cite{MR3705293} allows $X$ to have singularities (\Cref{thm_k-secant_lemma_singular_case}).
	
	However, despite these bounds, even the dimension of such a locus is hard to compute. The following problem, which naturally arises in the study of Castelnuovo--Mumford regularity via general projections, is of particular interest (see e.g. \cite{MR894589,MR1064868,MR1620706}).
	
	\begin{problem}\label{problem_fill_up_general}
		Classify nondegenerate smooth $n$-folds $X\subseteq\p^r$ such that $(I_X)_{d-1}=0$ and $S_d(X)\neq\p^r$.
	\end{problem}
	
	The assumption $(I_X)_{d-1}=0$ is necessary since if $(I_X)_{d-1}\ne 0$, then it trivially holds that $S_d(X)\neq\p^r$ since $S_d(X)$ lies in the hypersurface defined by $f\in (I_X)_{d-1}\setminus \{0\}$.
	This problem is studied not only for its own sake but also for its connection to the Eisenbud--Goto conjecture \cite{MR741934}.
	For example, the above results give $\dim S_4(X)\le 5$, and if one can prove $\dim S_4(X)\le 4$ for a smooth threefold $X$, then the Eisenbud--Goto conjecture holds for such $X$ (cf. \cite{MR894589,MR1064868}).
	Although the Eisenbud--Goto conjecture has singular counterexamples \cite{MR3758150}, the smooth case remains open.

	To study the locus of $d$-secant lines, one may ask how to compute such a locus for a given $X$.
	In general, this is a difficult question to answer.
	Studying this question, we are naturally led to the following locus.
	{\small
	\[
	S_{k,d}(X) := \left\{
	\begin{aligned}
		&\text{Union of $k$-planes $L$ such that $L\cap X$ scheme-}\\
		&\text{theoretically contains a hypersurface of degree $\geq d$ in $L$}
	\end{aligned}
	\right\}\subseteq\p^r
	\]%
	}%
	This locus is closed (\Cref{prop:Skd_closed}).
	Note that when $k=1$, such $1$-planes are exactly the $d$-secant lines, hence
	\[
		S_{1,d}(X)=S_d(X).
	\]

	With this definition, we give set-theoretic equations of $S_{k,d}(X)\cup X$ when $X$ satisfies the property $\N_{d,2}$.
	Here, we say a projective scheme $X\subseteq\p^r$ satisfies the property $\N_{d,p}$ if for all $i\le p$ and $j\ge d$, the $(i,j)$-th Betti number vanishes, i.e., $\beta_{i,j}(X):=\dim_\c\tor_i(S_X,\c)_{i+j}=0$ where $S_X$ denotes the homogeneous coordinate ring of $X$ \cite{MR2188445}. Similarly, we say $d$-forms $F_0,\dots,F_s$ satisfy $\N_{d,p}$ if the ideal $I$ generated by those satisfies $\dim_\c\tor_i(S/I,\c)_{i+j}=0$ for all $i\le p$ and $j\ge d$.
	When $X$ satisfies $\N_{d,2}$, it is scheme-theoretically defined by $d$-forms satisfying $\N_{d,2}$. Indeed, we may replace any $(d-1)$-form $F$ in the minimal generating set of $I_X$ by $x_0F,\dots,x_rF$, and apply a similar process to the forms of lower degrees in the generating set. The resulting generators, consisting of $d$-forms only, define $X$ scheme-theoretically and still satisfy $\N_{d,2}$.
	In the following, $I_t(M)$ denotes the ideal of $t$-minors of $M$ and $\K_d$ is a property defined by Vermeire (see \Cref{def_Kd}).
	
	\begin{thmalphabetintro}[Set-theoretic equations of $S_{k,d}(X)\cup X$]\label{intro_set-theoretic_eq}
		Let $X\subseteq\p^r=\textup{Proj}\ \c[x_0,\dots,x_r]$ be a (possibly singular) projective scheme that is scheme-theoretically defined by $d$-forms $F_0,\dots,F_s$ satisfying $\N_{d,2}$ (or more generally $\K_d$). Let \[\{(L_{i0},\dots,L_{is})\}_{1\leq i\leq m}\] be a generating set of the linear syzygies of the $F_i$'s, and write
		\[
		L_{ij} =a^0_{ij}x_0+\cdots+a^r_{ij}x_r
		\]
		where $a^k_{ij}\in\c$.
		Let $M$ be the $m\times (r+1)$ matrix of $d$-forms as follows.
		\[M=\begin{bmatrix}
			\sum\limits^s_{j=0} a^0_{1j}F_j &\sum\limits^s_{j=0} a^1_{1j}F_j& \cdots & \sum\limits^s_{j=0} a^r_{1j}F_j\\
			\sum\limits^s_{j=0} a^0_{2j}F_j &\sum\limits^s_{j=0} a^1_{2j}F_j& \cdots & \sum\limits^s_{j=0} a^r_{2j}F_j\\
			\vdots & \vdots & \ddots & \vdots \\
			\sum\limits^s_{j=0} a^0_{mj}F_j &\sum\limits^s_{j=0} a^1_{mj}F_j& \cdots & \sum\limits^s_{j=0} a^r_{mj}F_j\\
		\end{bmatrix}\]
		Then $I_{r+1-k}(M)$ defines $S_{k,d}(X)\cup X$ set-theoretically for every $k\geq 1$.
		In particular, $I_r(M)$ defines $S_{1,d}(X)\cup X$ set-theoretically.
		Hence $S_{1,d}(X)=\p^r$ if and only if every $r$-minor of $M$ is zero.
	\end{thmalphabetintro}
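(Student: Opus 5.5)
The plan is to work pointwise. Fix a point $p\in\p^r\setminus X$, so that the vector $F(p):=(F_0(p),\dots,F_s(p))$ is nonzero. Then show that the projectivized kernel of the constant matrix $M(p)$,
\[
\Lambda_p:=\{y\in\p^r : M(p)\,y=0\},
\]
is exactly the union of the $k$-planes through $p$ along which $X$ has the required contact. Everything rests on the identity
\[
(M(p)\,y)_i=\sum_{k=0}^r\sum_{j=0}^s a^k_{ij}F_j(p)\,y_k=\sum_{j=0}^s L_{ij}(y)\,F_j(p).
\]
Applying the $i$-th linear syzygy $\sum_j L_{ij}F_j=0$ at the point $p$ gives $p\in\Lambda_p$. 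Hence $\operatorname{rank}M(p)\le r$ always, and $\Lambda_p$ is a linear space through $p$.

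The theorem then reduces to the following claim. For $p\notin X$, a $k$-plane $L\ni p$ satisfies the defining condition of $S_{k,d}(X)$ if and only if $L\subseteq\Lambda_p$. Granting the claim, $p\in S_{k,d}(X)$ iff $\dim\Lambda_p\ge k$, i.e. iff $\operatorname{rank}M(p)\le r-k$, i.e. iff every $(r+1-k)$-minor of $M$ vanishes at $p$. Points of $X$ lie in $V(I_{r+1-k}(M))$ trivially, since $M$ vanishes identically on $X$. One detail must be checked: a $k$-plane meeting $X$ in a degree-$\ge d$ hypersurface but not passing through $p$ is irrelevant, because $S_{k,d}(X)$ is the union of such planes, so any $p$ in it lies on one of them.

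For the ``only if'' direction, suppose $L\cap X$ contains a hypersurface $\{G=0\}\subseteq L$ of degree $\ge d$. Each $F_j|_L$ has degree $d$ and vanishes on it, so $G$ has degree exactly $d$ and $F_j|_L=c_jG$. Moreover $G(p)\ne0$ since $p\notin X$, so $c_j=F_j(p)/G(p)$. Restricting the syzygy to $L$ gives $G\cdot\sum_jL_{ij}|_L\,F_j(p)=0$, hence $\sum_jL_{ij}(y)F_j(p)=0$ for all $y\in L$, that is, $L\subseteq\Lambda_p$.

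The converse is where $\N_{d,2}$ (or $\K_d$) enters, and I expect it to be the main obstacle. Take $L\subseteq\Lambda_p$ through $p$; we must show $F_j|_L$ is proportional to $F_j(p)$, i.e. that the $d$-forms
\[
f_{jk}:=F_j(p)F_k-F_k(p)F_j
\]
vanish on $L$. Then $G:=\sum_j\overline{F_j(p)}F_j/\|F(p)\|^2$ restricted to $L$ works, and $G|_L\ne0$ since $G(p)=1$. I would use $\N_{d,2}$ in the form that every syzygy of the $F_j$ is generated by linear ones. In particular this applies to the syzygy
\[
\tau_k:=\sum_j F_j(p)\,(F_k e_j-F_j e_k),
\]
built from the Koszul relations: write $\tau_k=\sum_i A_{ik}\,(L_{i0},\dots,L_{is})$ with $\deg A_{ik}=d-1$. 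Its $j$-th component is $F_j(p)F_k-\delta_{jk}\sum_lF_l(p)F_l$. Pairing the identity componentwise with the constant vector $F(p)$ gives
\[
\|F(p)\|^2F_k-F_k(p)\sum_lF_l(p)F_l=\sum_iA_{ik}\sum_jL_{ij}F_j(p),
\]
where the pairing uses the Hermitian conjugate, or alternatively a suitable bilinear choice after a coordinate change making $F(p)=e_0$. The right-hand side vanishes on $L\subseteq\Lambda_p$. Choosing coordinates with $F(p)=(1,0,\dots,0)$ makes this clean: we get $F_k|_L=0$ for $k\ge1$, so all $F_j|_L$ are multiples of $F_0|_L\neq0$. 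This is exactly the contact condition. The delicate part is verifying that this step uses only the part of $\N_{d,2}$ (equivalently $\K_d$) concerning Koszul syzygies, which is presumably the content of Vermeire's definition in \Cref{def_Kd}. Finally, $k=1$ gives the statement for $I_r(M)$ and the criterion $S_{1,d}(X)=\p^r$.
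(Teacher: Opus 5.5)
Your proof is correct. It is a more elementary, pointwise version of the paper's argument rather than a new idea.

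The paper works with the closure of the graph $\Gamma_\varphi\cong\bl_X\p^r$ of $\varphi=[F_0,\dots,F_s]$. By Vermeire's \Cref{prop_fiber_of_lifted_kd_map} and \Cref{rem_fiber_of_lifted_kd_map_without_assumption}, for $p\notin X$ the set $\pi_1(\pi_2^{-1}(\varphi(p)))$ is the linear space cut out by the forms $\sum_j L_{ij}\,F_j(p)$. This is exactly your $\Lambda_p=\ker M(p)$, and it meets $X$ in a degree $d$ hypersurface. \Cref{rem_k-plane_maps_to_a_point}, which is your ``only if'' direction, then places every $k$-plane of $\Sigma_{k,d}(X)$ through $p$ inside that fiber. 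Your computation in the normalization $F(p)=e_0$ writes the Koszul syzygy $F_ke_0-F_0e_k$ in terms of linear syzygies to get $F_k\in(L_{10},\dots,L_{m0})$ for $k\ge 1$. That is precisely the algebra behind the paper's assertion that $\Gamma_\varphi\subseteq T\subseteq S$ coincide off $E$.

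Your route buys self-containedness. It needs no blow-up, no closure-of-the-graph argument, and no case analysis for fibers contained in $E$. It also shows plainly that only the Koszul syzygies are used, so $\K_d$ is exactly the hypothesis required. The paper's route gives in addition the global statement that every positive-dimensional fiber of $\varphi$ is a linear space meeting $X$ in a degree $d$ hypersurface.

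One step needs fixing. The displayed identity with $\|F(p)\|^2$ does not follow from $\tau_k$ as you defined it, under either pairing.
\begin{itemize}
\item Pairing Hermitian-wise turns the right-hand side into $\sum_iA_{ik}\sum_jL_{ij}\overline{F_j(p)}$, which need not vanish on $\Lambda_p$.
\item Pairing bilinearly gives the coefficient $\sum_jF_j(p)^2$ instead of $\|F(p)\|^2$, and that coefficient can be $0$.
\end{itemize}
There are two easy repairs.
\begin{itemize}
\item Put the conjugates into the syzygy, $\tau_k=\sum_j\overline{F_j(p)}\,(F_ke_j-F_je_k)$, and pair bilinearly with $F(p)$. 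This yields $\|F(p)\|^2F_k-F_k(p)\sum_l\overline{F_l(p)}F_l$, which matches your choice of $G$.
\item Use your normalization $F(p)=e_0$. This is legitimate: replacing the column $F$ by $gF$ and each row $(L_{i0},\dots,L_{is})$ by $(L_{i0},\dots,L_{is})g^{-1}$ preserves $\K_d$ and leaves $M$ literally unchanged.
\end{itemize}

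Two minor wording points. First, $\N_{d,2}$ implies $\K_d$ (\Cref{prop_Nd2_implies_Kd}) but is not equivalent to it. Second, if the $F_j$ are linearly dependent there are constant syzygies, so ``every syzygy is generated by linear ones'' holds only in degrees $\ge d+1$. That range covers the Koszul syzygies, which are all you use.
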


	The proof of \Cref{intro_set-theoretic_eq} is based on Vermeire's description of the fibers of the rational map $\p^r\dashrightarrow\p^s$ defined by $F_0,\dots,F_s$ (see \Cref{prop_fiber_of_lifted_kd_map}).

	Now we return to \Cref{problem_fill_up_general} and recall the history of the codimension two case.
	Due to Zak \cite[Corollary II.2.11]{MR1234494}, when $n> \dfrac{2}{3}(r-2)$, we have $S_{2}(X)=\p^r$.
	Consequently, we have $S_{2}(X)=\p^{n+2}$ in the case of codimension two.
	Furthermore, the (dimension+2)-secant lemma shows $S_{d}(X)\neq \p^{n+2}$ for all $d\geq n+2$.
	Hence it is enough to consider the case $3\leq d\leq n+1$.
	The case $n\geq 4$ was solved by Ran \cite{MR714095}, the case $n=2$ was solved by Aure \cite{MR962514}\footnote{Severi had also studied this case \cite{zbMATH02663596}.}, and the case $n=3,d=3$ was solved by the second author \cite{MR1857133}.
	Hence the case $n=3,d=4$ is the only unsolved case for codimension two.
	This makes the case interesting, and it is covered in various papers, see \cite{MR1857133,MR1984214}, and also \cite[p. 575]{MR3024093} for a related discussion.
	
	For the case of codimension greater than two, almost nothing is known. As Gruson and Peskine pointed out, no smooth variety $X$ of dimension $2n$ in $\p^{3n+1}$ ($n\geq 2$) is known such that $(I_X)_2=0$ and $S_{3}(X)\neq\p^{3n+1}$ \cite[p. 575]{MR3024093}. Therefore, the most basic question that has to be resolved currently is the following.
	
	\begin{open question}[{\cite[Open questions 4.7]{MR1857133},\cite[pp. 574-575]{MR3024093}}]\label{question:conjecture}
		Is there any nondegenerate smooth $n$-fold $X\subseteq\p^r$ satisfying 
		\[S_{d}(X)\neq\p^r\text{ and }(I_X)_{d-1}=0\]
		when
		\begin{enumerate}
			\item $n=3$, $r=5$, $d=4$, or \label{question:threefold}
			\item $n=4$, $r=7$, $d=3$? \label{question:fourfold}
		\end{enumerate}
	\end{open question}

	Currently, no such varieties are known in either case. To the best of the authors' knowledge, even the existence of such singular varieties was not known. Meanwhile, one can determine whether $S_{d}(X)\neq\p^r$ very fast using \Cref{intro_set-theoretic_eq}: one can evaluate the rank of the matrix at a point that is a certificate of $S_{d}(X)\neq\p^r$ (\Cref{rem_determine_probabilistically}).
	Using this, we get the following corollary.
	
	\begin{coralphabetintro}\label{cor:existence}
		There exist the following nondegenerate projective varieties:
		\begin{enumerate}
			\item a threefold $X\subseteq\p^5$ regular in codimension $1$ with $S_{4}(X)\neq \p^5$ and $(I_X)_3=0$,
			\item a normal fourfold $X\subseteq\p^7$ with $S_{3}(X)\neq\p^7$ and $(I_X)_2=0$.
		\end{enumerate}
	\end{coralphabetintro}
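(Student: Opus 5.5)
The plan is to prove \Cref{cor:existence} by exhibiting explicit examples and certifying their properties with \Cref{intro_set-theoretic_eq}. For (1), I will look for a threefold $X\subseteq\p^5$ of small degree whose ideal is generated by quartics (with $(I_X)_3=0$) satisfying $\N_{4,2}$, i.e.\ with a $4$-linear resolution at least in the first two steps. Natural candidates are determinantal threefolds: the degeneracy locus of a map $\mathcal{O}^{a}\to\mathcal{O}(1)^{b}$ on $\p^5$, chosen so that the Eagon--Northcott complex is $4$-linear in its first steps. The simplest such choice is the maximal minors of a $4\times 5$ matrix of linear forms; by Hilbert--Burch/Eagon--Northcott this gives a codimension two scheme of degree $10$, generated by five quartics with linear first syzygies (the rows), so $\N_{4,2}$ holds and $(I_X)_3=0$. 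A general such matrix yields a smooth threefold, where \Cref{question:conjecture} predicts $S_4(X)=\p^5$. So I will instead take special linear matrices (for instance with repeated or coordinate-sparse entries) until the degeneracy locus becomes singular while staying irreducible and reduced.

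Given a candidate, I form the $m\times 6$ matrix $M$ of quartics from the generators and linear syzygies as in \Cref{intro_set-theoretic_eq}. By that theorem, $S_4(X)\neq\p^5$ holds iff some $5$-minor of $M$ is nonzero. Following \Cref{rem_determine_probabilistically}, it suffices to find one rational point $p$ with $\rank M(p)=5$; evaluating $M$ at $p$ yields a numerical matrix whose rank can be computed exactly over $\mathbb{Q}$, giving a rigorous certificate. The remaining checks are computational and routine in Macaulay2: $(I_X)_3=0$ by the degree of the generators; irreducibility and dimension via primary decomposition; nondegeneracy since there are no linear forms in $I_X$; and regularity in codimension one by showing the singular locus, computed from the Jacobian ideal, has dimension at most $1$.

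For (2), the strategy is the same with $d=3$ in $\p^7$. I will take a codimension three scheme whose ideal is generated by cubics with linear first syzygies, for example determinantal loci of a $3\times 5$ or similar matrix of linear forms (or Pfaffian or Buchsbaum--Eisenbud constructions), specialized to make the fourfold singular. After checking $\N_{3,2}$, i.e.\ generation by cubics with only linear first syzygies, I will form $M$ and find a point where $\rank M=7$. Normality is checked by Serre's criterion: $R_1$ from the Jacobian, and $S_2$ via depth computations, which hold automatically if $X$ is arithmetically Cohen--Macaulay as determinantal loci are.

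The main obstacle is the search itself. The open question suggests that smooth examples fill $\p^r$, so singularities must be forced in just the right amount: enough for $S_d(X)\neq\p^r$, but mild enough for $R_1$ or normality. I would first try sparse specializations of the generic linear matrices and scan many random sparse choices, using the fast rank test as a filter. I expect this experimental step, rather than any proof step, to be the hard part.
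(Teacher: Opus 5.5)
There is a genuine gap. An existence statement is proved only by exhibiting examples, and the families you propose to search provably contain none. Your verification machinery is the same as the paper's: \Cref{intro_set-theoretic_eq}, a rational point $p$ with $\rank M(p)=r$ as a certificate, and Jacobian and depth checks. The problem is the search space.

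For (1), suppose $X\subseteq\p^5$ is cut out by the $4$-minors of a $4\times 5$ matrix $A$ of linear forms and has codimension $2$. Then for every specialization, the Hilbert--Burch complex $0\to S(-5)^4\to S(-4)^5\to S$ is the minimal resolution, so $\beta_{2,3}(X)=4$. The matrix $M$ of \Cref{intro_set-theoretic_eq} is therefore $4\times 6$ and has no $5$-minors at all. By the very criterion you intend to use, $S_4(X)=\p^5$; compare \Cref{rem_lower_bound_linear_syzygies}, which requires at least $5$ linear syzygies. Geometrically, take $p\notin X$ with $\ker A(p)=\langle v_p\rangle$. The linear space $\{x : A(x)v_p=0\}$ passes through $p$, has dimension $\ge 1$, and $X$ cuts a quartic hypersurface in it. Making $X$ singular changes nothing.

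This is an instance of the fact the paper recalls at the start of its last section: for an ACM codimension-two $X$, $(I_X)_3=0$ already forces $S_4(X)=\p^5$. So the threefold must be non-ACM. The paper's example is of this kind: it is the image of $\p^3\dashrightarrow\p^5$ under the six explicit cubic monomials and binomials, has projective dimension $3$, and is singular along three lines (regular in codimension $1$ but not normal).

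For (2), your named candidates fail for the same structural reason.
\begin{itemize}
\item For the $3$-minors of a $3\times 5$ linear matrix, the linear syzygies make the fiber of $\varphi$ through $p\notin X$ equal to $\{x : A(x)w=0 \text{ for all } w\in\ker A(p)\}$. This is cut out by at most $6$ linear conditions in $\p^7$.
\item For the $6\times 6$ Pfaffians of a $7\times 7$ skew-symmetric linear matrix, the fiber is $\{x : A(x)v_p=0\}$. This again imposes at most $6$ conditions, because $v_p^{T}A(x)v_p\equiv 0$.
\end{itemize}
In both cases $\rank M(p)\le 6$ everywhere, so $S_3(X)=\p^7$ for every member, smooth or singular. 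The paper's fourfold is again the image of an explicit cubic map. It is ACM with Betti table $9,12,3$ in row $2$ and $1$ in row $3$, so it is of neither type.

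The missing idea is that $S_d(X)\neq\p^r$ is governed by how many independent linear conditions the linear syzygies impose at a general point, not by singularities. One must choose a family where these conditions are numerous enough. For example, for (2) the $3$-minors of a general symmetric $4\times 4$ linear matrix on $\p^7$ do work, as one checks: the relation $A\operatorname{adj}A=\det A\cdot I$ forces the fiber through any $p$ with $\det A(p)\neq 0$ to be $\{p\}$. That $X$ is normal and singular only along a curve. For (1), by contrast, you must leave the ACM world altogether.
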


	In particular, if the smoothness condition in \Cref{question:conjecture} is weakened to Serre's condition $R_1$ (resp. the normality condition) in Case \eqref{question:threefold} (resp. Case \eqref{question:fourfold}), then the answer is positive.
	
	We give another application of \Cref{intro_set-theoretic_eq}.
	Denote the $q$-secant variety of $X$ as
	\[
	\sigma_qX:=\overline{\bigcup_{p_i\in X}\langle p_1,\dots,p_q\rangle}.
	\]
	In \cite[Lemma 4.1]{MR1985451}, Vermeire showed $\sigma_{q+1}X\subseteq S_{q+1}(\sigma_q X)$.
	With the locus $S_{k,d}$ defined above, we make a finer filtration
	\[
	\sigma_{q+1}X\subseteq S_{q,q+1}(\sigma_{q}X)\subseteq S_{q-1,q+1}(\sigma_{q}X)\subseteq\cdots\subseteq S_{1,q+1}(\sigma_{q}X)=S_{q+1}(\sigma_{q}X).
	\]
	Consequently, we get the following corollary of \Cref{intro_set-theoretic_eq}.
	
	\begin{coralphabetintro}\label{intro_secant_eq}
		Let $X\subseteq\p^r$ be a nondegenerate projective variety such that $\sigma_qX\subsetneq\p^r$ satisfies $\N_{q+1,2}$ where $q\ge 1$.
		Define $M$ as in \Cref{intro_set-theoretic_eq} for the minimal generators of $I_{\sigma_qX}$. Then it holds that
		\[
			I_{r-q+1}(M)\subseteq I_{\sigma_{q+1}X}.
		\]
	\end{coralphabetintro}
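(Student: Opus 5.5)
The plan is to apply \Cref{intro_set-theoretic_eq} to the scheme $Y:=\sigma_qX$ with $d=q+1$ and $k=q$. Then use the inclusion $\sigma_{q+1}X\subseteq S_{q,q+1}(\sigma_qX)$ from the filtration displayed before the statement, and pass from set-theoretic containment to ideal containment via the radical.

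\textbf{Step 1 (hypotheses of \Cref{intro_set-theoretic_eq}).} Since $\sigma_qX$ satisfies $\N_{q+1,2}$, its minimal generators have degree at most $q+1$. If the theorem needs $(q+1)$-forms, replace the lower-degree generators by their multiples by $x_0,\dots,x_r$, as explained in the introduction. The resulting forms define $\sigma_qX$ scheme-theoretically and still satisfy $\N_{q+1,2}$. The statement says ``minimal generators''; I would interpret $M$ as built from these $(q+1)$-forms, or check that minimal generators already have degree $q+1$. Indeed, $X$ nondegenerate gives $\sigma_qX$ with no equations of degree $\le q$ (a form of degree $\le q$ vanishing on $\sigma_qX$ vanishes on every $q$-secant space, hence is identically zero by a standard argument), so the generators are exactly $(q+1)$-forms. \Cref{intro_set-theoretic_eq} with $k=q$ then says that $I_{r+1-q}(M)$ cuts out $S_{q,q+1}(\sigma_qX)\cup\sigma_qX$ set-theoretically.

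\textbf{Step 2 (geometric inclusion).} I would justify $\sigma_{q+1}X\subseteq S_{q,q+1}(\sigma_qX)$, which is the main geometric point. Take general points $p_1,\dots,p_{q+1}\in X$ and let $L=\langle p_1,\dots,p_{q+1}\rangle\cong\p^q$. For each $i$, the hyperplane $H_i=\langle p_j: j\ne i\rangle\subseteq L$ is contained in $\sigma_qX$. So $L\cap\sigma_qX$ contains the union of the $q+1$ coordinate hyperplanes of $L$, a hypersurface of degree $q+1$ in $L$. Here scheme-theoretic containment follows because $\sigma_qX$ is reduced and the union of the $H_i$ is a reduced hypersurface. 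Thus a general point of $\sigma_{q+1}X$ lies in $S_{q,q+1}(\sigma_qX)$, and since this locus is closed (\Cref{prop:Skd_closed}), we get $\sigma_{q+1}X\subseteq S_{q,q+1}(\sigma_qX)$.

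\textbf{Step 3 (ideal containment).} By Steps 1 and 2, the zero locus $V(I_{r+1-q}(M))$ contains $\sigma_{q+1}X$, but this alone gives the wrong direction. What we need is that every $(r+1-q)$-minor vanishes on $\sigma_{q+1}X$, i.e.\ $\sigma_{q+1}X\subseteq V(I_{r+1-q}(M))$. This is exactly what the set-theoretic description gives: $V(I_{r+1-q}(M))=S_{q,q+1}(\sigma_qX)\cup\sigma_qX\supseteq\sigma_{q+1}X$. Since $\sigma_{q+1}X$ is a reduced variety, $I_{\sigma_{q+1}X}$ is radical, and $I(V(I_{r+1-q}(M)))\subseteq I_{\sigma_{q+1}X}$. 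Moreover $I_{r+1-q}(M)\subseteq\sqrt{I_{r+1-q}(M)}=I(V(I_{r+1-q}(M)))$, which yields $I_{r-q+1}(M)\subseteq I_{\sigma_{q+1}X}$.

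The expected obstacle is only bookkeeping: verifying that the degree convention for $M$ matches the theorem's hypothesis that $\sigma_qX$ is defined by $(q+1)$-forms, and checking the scheme-theoretic containment in Step 2. Neither seems serious.
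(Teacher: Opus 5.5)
Your proposal is correct and matches the paper's proof. The paper also shows $\sigma_{q+1}X\subseteq S_{q,q+1}(\sigma_qX)$ using the $q+1$ coordinate hyperplanes of a $(q+1)$-secant $q$-plane together with closedness, uses $(I_{\sigma_qX})_q=0$ to see that the minimal generators are $(q+1)$-forms, and then applies \Cref{intro_set-theoretic_eq} with $d=q+1$, $k=q$. The differences are cosmetic: the paper cites that vanishing where you prove it by the coordinate-subspace argument, and your ``wrong direction'' remark in Step 3 is only a slip of wording, since both phrasings state $\sigma_{q+1}X\subseteq V(I_{r+1-q}(M))$, which is what you need.
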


	The condition that $\sigma_qX\subsetneq\p^r$ satisfies $\N_{q+1,2}$ holds in natural cases.
	Indeed, for a given integer $q\ge 1$ and for a smooth $n$-fold $X$ that is embedded by the complete linear system of a sufficiently positive line bundle, its secant variety $\sigma_qX$ satisfies $\N_{q+1,2}$ when $n\le 2$ or $q\le 3$ \cite[Theorem B]{CLPS25}.
	
	On the other hand, we show that there is a dimension bound on $S_{k,d}(X)$ when $X$ is smooth, without the condition $\N_{d,2}$ required before.
	To achieve this, we naturally prove a slightly more general bound, that is, a bound for $\dim S^*_{k,d}(X)$ where $S^*_{k,d}(X)\subseteq S_{k,d}(X)$ is the union of $k$-planes $L$ such that
	\begin{itemize}
		\item $L\cap X$ scheme-theoretically contains a hypersurface of degree $\geq d$ in $L$, as before, and
		\item $\dim (L\cap X_\sing)\le k-2$.
	\end{itemize}
	Note that when $\dim X_\sing\le k-2$, in particular if $X$ is smooth, it holds that
	\[
		S^*_{k,d}(X)=S_{k,d}(X).
	\]
	\begin{thmalphabetintro}\label{intro_dimension_bound}
		Let $X\subseteq\p^r$ be a projective variety of dimension $n$ and let $1\le k\le n+1$ and $d\ge 2$. Then \[\dim S^*_{k,d}(X)\leq \max(n+1+\dfrac{n+1-k}{d-1},n+k).\]
	\end{thmalphabetintro}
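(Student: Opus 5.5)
Our plan is to reduce to the $d$-secant lemma for lines in the form allowing singularities (\Cref{thm_k-secant_lemma_singular_case}, from \cite{MR3705293}), applied to lines inside the $k$-planes. For this we use an incidence correspondence and a dimension count. Let $\Sigma\subseteq\g(k,r)$ be the closure of the family of $k$-planes $L$ such that $L\cap X$ contains a hypersurface $D_L\subseteq L$ of degree $\ge d$ and $\dim(L\cap X_\sing)\le k-2$. Then $\dim S^*_{k,d}(X)\le \dim\Sigma+k$. We split according to the dimension of the family $\mathcal{D}=\{D_L\}$ of divisors, which are $(k-1)$-dimensional subschemes of $X$.

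\emph{Case 1: a fixed divisor $D$ lies in many $L$.} Suppose a general $D_L$ spans a linear space of dimension $k-1$, i.e.\ $D_L$ is a hyperplane section of $L$ set-theoretically, with $\deg D_L\ge d\ge 2$. Then $D_L$ sits in a $(k-1)$-plane $\Lambda$ with $\Lambda\cap X\supseteq D_L$ of dimension $k-1$. Then $L$ ranges over the $k$-planes through $\Lambda$. Such $\Lambda$'s lie in $X$ or are components of $\Lambda\cap X$, so they sweep out at most $X$ itself in dimension $n$. Counting, the $k$-planes containing a $(k-1)$-dimensional piece of $X$ sweep out at most $n+k$ dimensions: roughly, choose a point of $X$ and then $k$ further directions. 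We expect to make this rigorous via the family of $(k-1)$-dimensional subvarieties of $X$ and their cones. This is the source of the term $n+k$.

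\emph{Case 2: $D_L$ genuinely determines $L$.} Here we take a general point $p\in S^*_{k,d}(X)$ lying on a general $L$. Lines $\ell\subseteq L$ through $p$ meet $D_L$ in a scheme of length $\ge d$, so they are $d$-secant lines of $X$. The condition $\dim(L\cap X_\sing)\le k-2$ guarantees that a general such line avoids $X_\sing$. Thus these lines lie in the subfamily $\mathcal{S}^*$ of $d$-secant lines meeting $X$ only in smooth points, and \cite{MR3705293} bounds the dimension of $\mathcal{S}^*$ by $2n-(d-1)\cdot(\text{excess})$. We then form the incidence $\{(\ell,L): \ell\subseteq L\}$. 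Each $L$ contributes a $(2k-2)$-dimensional family of lines: a general line of $L$ meets $D_L$ in $\ge d$ points. Distinct $L$ share few lines, since a $d$-secant line together with the divisor determines $L$ in this case. This gives
\[
\dim\Sigma+2k-2\le \dim\{\text{$d$-secant lines}\}+(\text{fiber dimension}).
\]
Using the refined estimate that the $d$-secant lines through a general point of their locus form a family of dimension $\ge$ the locus's excess, we rewrite the Beheshti--Eisenbud type inequality. With $m=\dim S^*_{k,d}(X)$, the lines through a general point form a family of dimension $\ge m-(k)\,$ plus $(k-1)$ from within $L$. The bound $(m-n-1)(d-1)\le n$ then improves to $(m-n-1)(d-1)\le n+1-k$. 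Solving gives $m\le n+1+\frac{n+1-k}{d-1}$.

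The main obstacle is Case 2: justifying that the extra $k-1$ dimensions of lines through $p$ inside $L$ enter the bound of \cite{MR3705293} precisely as a reduction of $k-1$ in the numerator. We would try to redo their argument, which deforms the $d$ points on a line along $X$ and bounds tangent directions, now deforming the divisor $D_L$ inside $X$. Its $d$-fold intersections with lines provide $k-1$ extra independent constraints from tangency of $X$ along $D_L$ at smooth points.
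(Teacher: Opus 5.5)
There is a genuine gap, and it is at the step that carries the content of the theorem. In Case 2 you correctly note that a general line through a general point of a general $L$ lies in $\Sigma^*_d(X)$. But this only gives $S^*_{k,d}(X)\subseteq\overline{S^*_d(X)}$, so invoking the $d$-secant lemma for lines (\Cref{thm_k-secant_lemma_singular_case}) yields nothing better than $n+1+\frac{n}{d-1}$.

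The improvement of the numerator from $n$ to $n+1-k$ is only asserted (``the bound then improves to''). The ``refined estimate'' on excess families of lines through a general point is not an available result, and your last paragraph concedes you do not know how to get the extra $k-1$.

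The paper obtains it from a focal-locus computation for \emph{congruences of $k$-planes}. In \Cref{thm_dim_skd<=r-1} it assumes $S^*_{k,d}(X)=\p^r$, and \Cref{lem_reduce_base_in_grassmannian} then gives an $(r-k)$-dimensional closed $\Sigma_0\subseteq\g(k,r)$ covering $\p^r$ with general member in $\Sigma^*_{k,d}(X)$. After desingularizing, $\mu:\Gamma_\Sigma\to\p^r$ is generically finite. On a general fibre $L_0$, two facts are compared:
\begin{itemize}
\item the ramification divisor $R$ restricts to a hypersurface of degree $r-k$ in $L_0$ (\Cref{lem_focal_locus_is_hypersurface_of_degree_r-k});
\item \Cref{lem_order_of_ramification_divisor} gives $R\ge (c-1)E$, where $E$ is the part of the divisorial component of $\mu^{-1}(X)$ dominating $\Sigma$ and meeting $X_{\textnormal{sm}}$, and $E\cap L_0$ contains the degree $\ge d$ hypersurface $H$. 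This is where $\dim(L\cap X_\sing)\le k-2$ and the absence of embedded points on $H$ are used.
\end{itemize}
Hence $r-k\ge (c-1)d$ with $c=r-n$, which is exactly $r\le n+1+\frac{n+1-k}{d-1}$. A plain congruence of lines would have focal degree $r-1$ and return the old numerator $n$; the gain comes from the $k$-plane structure. This argument needs the planes to fill the ambient space, so the paper first reduces to that case by general projections (\Cref{lem_general_projection_preserves_dim_skd}). Your plan has no such reduction.

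Your Case 1 is also incorrect, and it misidentifies where the term $n+k$ comes from. The count ``a point of $X$ and then $k$ further directions'' ignores that the $k$-planes through a $(k-1)$-plane $\Lambda\subseteq X$ fill $\p^r$. Imposing the contact condition only forces $L\subseteq T_xX$ for $x\in\Lambda\cap X_{\textnormal{sm}}$, and that does not give $n+k$. For example, with $k=1$ and $d=2$ your Case 1 lines are exactly the tangent lines. For the Veronese surface in $\p^5$ they sweep out the fourfold $\sigma_2X$, whereas $n+k=3$.

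In the paper, $n+k$ is an artifact of the projection step. To keep $\pi_q(L)\in\Sigma^*_{k,d}(\pi_q(X))$ without creating new points of $\pi_q(L)\cap\pi_q(X)$, the center $q$ must avoid the join $J(L_0,X)$, which can have dimension $n+k+1$. So one can only project down to $\p^{n+k+1}$. When $n+k+1$ exceeds $n+1+\frac{n+1-k}{d-1}$, the filling argument in $\p^{n+k+1}$ then gives $\dim S^*_{k,d}(X)\le n+k$.
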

	When $k=1$, this recovers the $d$-secant lemma in \cite{MR2601633} (smooth case) and \cite{MR3705293} (singular case).
	
	As a corollary, we get the following lower bound on the number of linear syzygies.
	\begin{coralphabetintro}\label{cor_lower_bound_beta_2d-1_intro}
		Let $X\subseteq\p^r$ be a projective variety of dimension $n$ and let $1\le k\le n+1$ and $d\ge 2$.
		Suppose that the homogeneous ideal $I_X$ is generated by $d$-forms satisfying $\N_{d,2}$.
		If $r>n+1+\dfrac{n+1-k}{d-1}$ and $\dim X_{\sing}\leq k-2$, then $\beta_{2,d-1}(X)\geq r+1-k$.
	\end{coralphabetintro}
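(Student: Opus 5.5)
The plan is to combine \Cref{intro_set-theoretic_eq} with \Cref{intro_dimension_bound}. First, since $\dim X_{\sing}\le k-2$, every $k$-plane $L$ meets $X_{\sing}$ in dimension at most $k-2$, so $S^*_{k,d}(X)=S_{k,d}(X)$. By \Cref{intro_dimension_bound},
\[
\dim S_{k,d}(X)\le \max\Bigl(n+1+\tfrac{n+1-k}{d-1},\,n+k\Bigr).
\]
I want this maximum to be $<r$. The first term is $<r$ by hypothesis. For the second, I expect that either $n+k\le n+1+\frac{n+1-k}{d-1}$ holds in the relevant range, or the $n+k$ term comes from planes contained in $X$, which can be handled separately; this comparison needs a case check. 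Note that $n+k<r$ follows easily whenever $k\le n+1$ and $r$ is large relative to $n$. I also need $X\ne\p^r$, which follows since $r>n+1$. Together these give $S_{k,d}(X)\cup X\subsetneq\p^r$.

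Next, apply \Cref{intro_set-theoretic_eq} to the minimal $d$-form generators $F_0,\dots,F_s$ of $I_X$, which satisfy $\N_{d,2}$. Take the generating set of linear syzygies to be a minimal one, so that the number of rows is $m=\beta_{2,d-1}(X)$. Linear syzygies of the $F_i$ are the degree-$(d+1)$ first syzygies, and these are counted by $\beta_{1,d}$ of $I_X$, i.e.\ $\beta_{2,d-1}(S_X)$ in the paper's indexing. The theorem says $I_{r+1-k}(M)$ cuts out $S_{k,d}(X)\cup X$ set-theoretically. This locus is a proper subset of $\p^r$, so some $(r+1-k)$-minor of $M$ is nonzero.

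A nonzero $(r+1-k)$-minor requires $M$ to have at least $r+1-k$ rows. Hence $m\ge r+1-k$, giving $\beta_{2,d-1}(X)\ge r+1-k$.

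The main obstacle is the bookkeeping in the first step: making sure the maximum in \Cref{intro_dimension_bound} is strictly below $r$ under only the stated hypothesis. If the term $n+k$ can exceed $n+1+\frac{n+1-k}{d-1}$, I would try to show that the $n+k$ contribution corresponds only to $k$-planes lying in $X$ or in the cone over $X$, a case not present when $r$ is large. Failing that, I would accept the hypothesis as implicitly including $r>n+k$.
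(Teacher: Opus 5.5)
The gap is in your first step, and it is not a matter of bookkeeping. \Cref{intro_dimension_bound} is too weak here: under the stated hypotheses its bound $\max\bigl(n+1+\frac{n+1-k}{d-1},\,n+k\bigr)$ is often $\ge r$, so it gives no information.

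\begin{itemize}
\item In \Cref{rem_smooth_threefold_in_p5} ($n=3$, $r=5$, $k=2$, $d=4$), the maximum is $n+k=5=r$.
\item In the sharp examples of \Cref{exp_sharp_cases_of_lower_bound} ($n=d^2+d-3$, $r=d^2+d-1$, $k=d^2$), one has $n+k=2d^2+d-3>r$.
\end{itemize}

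So no case check can rescue the argument. Your fallback of adding $r>n+k$ to the hypotheses proves a strictly weaker statement, and it excludes exactly the cases where the corollary is applied and is sharp.

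Your guess about where the $n+k$ term comes from is also off. It does not come from special $k$-planes lying in $X$ or in a cone. It comes from the projection step: \Cref{lem_general_projection_preserves_dim_skd} needs $r\ge n+k+2$ so that the center $q$ can avoid the join $J(L_0,X)$, which has dimension up to $n+k+1$.

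The missing ingredient is \Cref{thm_dim_skd<=r-1}, which states exactly what you need: if $r>n+1+\frac{n+1-k}{d-1}$, then $\dim S^*_{k,d}(X)\le r-1$. It is proved directly in $\p^r$, with no projection:

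\begin{itemize}
\item Assuming $\dim S^*_{k,d}(X)=r$, one obtains an $(r-k)$-dimensional family of $k$-planes, desingularizes it, and studies $\mu:\Gamma_\Sigma\to\p^r$.
\item \Cref{lem_order_of_ramification_divisor} gives $R\ge (c-1)E$ with $c=r-n$, where $R$ is the ramification divisor of $\mu$ and $E$ is the part of $\mu^{-1}(X)$ dominating the family.
\item By \Cref{lem_focal_locus_is_hypersurface_of_degree_r-k}, $R$ cuts a general plane $L_0$ in a hypersurface of degree $r-k$. That hypersurface contains $(c-1)H$ with $\deg H\ge d$.
\item Hence $r-k\ge (r-n-1)d$, i.e.\ $r\le n+1+\frac{n+1-k}{d-1}$, a contradiction.
\end{itemize}

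\Cref{intro_dimension_bound} is derived from this theorem via general projections, and it loses information when $r$ is already close to the threshold. Replace your appeal to \Cref{intro_dimension_bound} with \Cref{thm_dim_skd<=r-1}, and use $S_{k,d}(X)=S^*_{k,d}(X)$, which holds because $\dim X_{\sing}\le k-2$.

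Your second step is correct and is exactly \Cref{rem_lower_bound_linear_syzygies}. Building $M$ from a basis of the linear syzygies of the minimal generators gives $m=\beta_{2,d-1}(X)$ rows, and a nonvanishing $(r+1-k)$-minor forces $m\ge r+1-k$. With the substitution above, your argument becomes the paper's proof.
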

	This bound is sharp, attained by certain secant varieties of Segre varieties (\Cref{exp_sharp_cases_of_lower_bound}), so it cannot be improved in general.
	\\
	\\
	\indent
	This paper is organized as follows. In \Cref{sec_preliminaries}, we give preliminaries. In \Cref{sec:Skd_eqns}, we give set-theoretic equations of $S_{k,d}(X)$ and prove \Cref{intro_set-theoretic_eq} and \Cref{intro_secant_eq}. In \Cref{sec_dimension_bound}, we prove the dimension bound (\Cref{intro_dimension_bound}). In \Cref{sec:applications}, we give applications of main results. In \Cref{sec_Multisecant Lines to the Varieties of Codimension Two}, we show \Cref{cor:existence}.
	The computations in this paper are done by Macaulay2 \cite{M2} and the code can be found in \url{https://github.com/jihan099/multisecant}.
	
	\section*{Acknowledgements}
	J. I. Han was partially supported by Basic Science Research Program through the National Research Foundation of Korea (NRF) funded by the Ministry of Education (2019R1A6A1A10073887) when he was at Korea Advanced Institute of Science and Technology (KAIST). J. I. Han is currently supported by a KIAS Individual Grant (MG101401) at Korea Institute for Advanced Study.
	S. Kwak was supported by the National Research Foundation of Korea (NRF) grant funded	by the Korea government (MSIT) (No. RS-2024-00352592).
	The authors thank Junho Choe for helpful discussions. The proof of \Cref{lem_general_projection_preserves_dim_skd} is based on his argument given when we were reading \cite{MR3705293}.
	
	\section{Preliminaries}\label{sec_preliminaries}
	
	In this paper, we work over $\c$ and require a variety to be integral. We use the following notation unless we state otherwise.
	\begin{itemize}
		\item The homogeneous coordinate ring of $X$ is denoted as $S_X$.
		\item The polynomial ring $\c[x_0,\dots,x_r]$ of $\p^r$ is denoted as $S$.
		\item The $(i,j)$-th graded Betti number of $X$ is denoted by \[\beta_{i,j}(X):=\dim\tor^S_i(S_X,\c)_{i+j}.\]
		\item The image of the projection of $X\subseteq\p^r$ from the point $q\in \p^r$ to $\p^{r-1}$ is denoted as $X_q$.
		\item The image of a rational map is defined as the closure of its set-theoretic image.
		\item The depth of $X$ is defined as the depth of $S_X$.
	\end{itemize}
	
	We often identify a point $l\in\g(1,r)$ with the corresponding line $l\subseteq\p^r$.
	We set $\dim\varnothing=-1$.
	
	\begin{definition}\label{def_sd*}
		For a projective scheme $X\subseteq \p^r$ and $d\geq 2$, define
		\begin{itemize}
			\item $\Sigma_{d}(X) =\{l\in\g(1,r)\mid \leng(l\cap X)\geq d\}$,\\
			\item $\Sigma^*_{d}(X) =\{l\in\g(1,r)\mid \leng(l\cap X)\geq d,\,\supp(l\cap X)\subseteq X_{\text{sm}}\}$,\\
			\item $S_{d}(X) =\bigcup\limits_{l\in\Sigma_{d}(X)}l\subseteq\p^r$, and\\
			\item $S^*_{d}(X) =\bigcup\limits_{l\in\Sigma^*_{d}(X)}l\subseteq\p^r$.
		\end{itemize}
		The elements in $\Sigma_{d}(X)$ are called the \textit{$d$-secant lines} of $X$.
	\end{definition}
	
	Note that if $X$ is smooth, then \[\Sigma^*_{d}(X)=\Sigma_{d}(X)\text{ and }S^*_{d}(X)=S_{d}(X).\]
	
	\subsection{The $d$-secant lemma}\label{subsec_k-secant_lemma}	
	The following theorem, the $d$-secant lemma, is proved in \cite{MR2601633} for the smooth case, and in \cite{MR3705293} for the singular case.
	\begin{theorem}[\cite{MR2601633}, \cite{MR3705293}]\label{thm_k-secant_lemma_singular_case}
		Let $X\subseteq \p^r$ be a projective variety of dimension $n$. Then \[\dim S^*_{d}(X)\leq n+1+\dfrac{n}{d-1}.\]
	\end{theorem}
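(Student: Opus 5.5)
This is the $d$-secant lemma of Beheshti--Eisenbud and Bastianelli--De~Poi--Ein--Lazarsfeld--Ullery. I would follow the proof in \cite{MR3705293}, which works directly with singular $X$ by restricting to lines meeting $X$ only in smooth points. The strategy is to bound $\dim\Sigma^*_d(X)$ by $n+n/(d-1)$. Since every line is one-dimensional, this gives
\[
\dim S^*_d(X)\le \dim\Sigma^*_d(X)+1.
\]

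\textbf{Step 1: reduce to an incidence variety.} Let $\Sigma$ be an irreducible component of $\Sigma^*_d(X)$ with $\dim\Sigma=\sigma$. For a general $l\in\Sigma$, the scheme $l\cap X$ has length $\ge d$ and is supported on $X_{\text{sm}}$. Write $l\cap X$ as a sum of points with multiplicities $m_1,\dots,m_t$, where $\sum m_i\ge d$. Consider the correspondence
\[
\{(x,l) : x\in l\cap X\}\longrightarrow X,
\]
and the family of lines through a general point $x$ of $X$. The aim is a lower bound on $\dim T_xX$ in terms of $\sigma$: specifically, showing that the lines in $\Sigma$ force $X$ to have a tangent space that is too large.

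\textbf{Step 2: first-order deformation argument.} Take a general $l\in\Sigma$ and the tangent space $T_l\Sigma\subseteq H^0(N_{l/\p^r})=H^0(\mathcal O_l(1)^{r-1})$. A first-order deformation of $l$ preserving the condition ``length $\ge d$ at smooth points'' must keep each point of multiplicity $m_i$ on $X$ to order $m_i$. This means the section of $N_{l/\p^r}$, evaluated along the length-$m_i$ subscheme at $x_i$, lands in $N_{l\cap T_{x_i}X}$-type subspaces. Each section of $\mathcal O_l(1)$ is determined by its values at two points. Using the constraints at the $d$ (counted with multiplicity) points, one obtains
\[
\sigma\le \dim T_l\Sigma \le \text{contribution } (n-1)\text{ from moving along }X+\text{extra},
\]
and the extra is cut down by codimension $r-n$ conditions at each of the points beyond the first two. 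Making this precise, as in BDELU, one shows that for a general member, a general fiber of
\[
\{(x,l): x\in l\cap X\}\to X
\]
has dimension $\ge \sigma-n$. The lines through a general $x$ then sweep out a cone $C_x$ of dimension $\sigma-n+1$. At the other $d-1$ points (with multiplicity) of $l\cap X$, this cone meets $X$ in a locus of dimension $\ge \sigma-n$.

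\textbf{Step 3: the dimension count.} Project from $x$. Each line through $x$ becomes a point, and the residual $(d-1)$ intersection points map to the same point. Hence the projection $\pi_x:X\dashrightarrow\p^{r-1}$ has a locus of dimension $\sigma-n$ of points in the image whose fibers have length $\ge d-1$. Using that a general projection from a general point is birational onto its image, and that a non-reduced contact is controlled by tangency at smooth points, one derives
\[
(d-1)(\sigma-n)\le n.
\]
This gives $\sigma\le n+n/(d-1)$, and hence $\dim S^*_d(X)\le n+1+n/(d-1)$.

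\textbf{Main obstacle.} The main obstacle is Step 3. Its inequality comes from considering the variety of $(d-1)$-tuples on $X$ collinear with $x$, which has dimension $\ge (d-1)(\sigma-n)$. One must show that this variety injects into $X$ with fibers bounded so that its dimension is $\le n$, i.e., that the residual points vary independently. I would handle this by an induction on $d$: fix one residual point $y$, and apply the statement for $d-1$ to the cone over $X$ from $y$, or to the image $X_y$ of the projection from $y$. The smooth-support hypothesis is what makes the tangency computations valid, and the general projection lemma keeps $\Sigma^*$ of the same dimension.
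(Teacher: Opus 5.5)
Your proposal has a genuine gap. The whole strategy rests on the claim $\dim\Sigma^*_d(X)\le n+n/(d-1)$, and this claim is false, so the reduction $\dim S^*_d(X)\le\dim\Sigma^*_d(X)+1$ cannot work as stated.

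Take a smooth hypersurface $X\subseteq\p^{n+1}$ of degree $\ge d\ge 3$, e.g.\ a smooth plane cubic with $d=3$. A general line meets $X$ transversally in at least $d$ smooth points. So $\Sigma^*_d(X)$ is dense in $\g(1,n+1)$ and has dimension $2n>n+n/(d-1)$, even though $S^*_d(X)=\p^{n+1}$ satisfies the theorem. The family of lines can be much larger than the union minus one, because many of its lines pass through each point.

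You would first have to replace $\Sigma^*_d(X)$ by a subfamily of dimension $\dim S^*_d(X)-1$ with the same union, as Lemma~\ref{lem_reduce_base_in_grassmannian} does by cutting with general Pl\"ucker hyperplanes. For such a minimal family, however, your desired inequality is equivalent to the theorem itself, so everything rests on Steps 2--3.

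Step 3 does not supply that inequality. The residual points on the lines of your family through $x$ are parametrized, finite-to-one, by those lines. Hence the ``variety of $(d-1)$-tuples collinear with $x$'' has dimension $\sigma-n$, not $(d-1)(\sigma-n)$. Nothing makes the $d-1$ residual points move independently. Your proposed induction via projection from a residual point $y$ is not set up so that the $(d-1)$-case applies, since that case is a statement about secant lines, not about multiple-point loci of a projection from a point of $X$.

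Step 2 is also off. A transverse contact point only forces $v(x_i)$ into the $n$-dimensional space $(T_{x_i}X+T_{x_i}l)/T_{x_i}l$. That is $c-1=r-n-1$ conditions, not $r-n$, and these conditions need not be independent.

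Notice that in the congruence case $\sigma=r-1$, your target $(d-1)(\sigma-n)\le n$ is exactly $r-1\ge d(c-1)$. This is what the actual argument proves (\cite{MR3705293}, and the paper's Theorem~\ref{thm_dim_skd<=r-1} with $k=1$), by a focal-locus count:
\begin{enumerate}
\item Suppose $S^*_d(X)=\p^r$. Take an $(r-1)$-dimensional congruence of such lines and desingularize it, obtaining a generically finite $\mu:\Gamma_\Sigma\to\p^r$.
\item The ramification divisor of $\mu$ restricts on a general line to the focal divisor, of degree $r-1$ (Theorem~\ref{lem_focal_locus_is_hypersurface_of_degree_r-k}).
\item Lemma~\ref{lem_order_of_ramification_divisor} shows this divisor contains $(c-1)$ times the divisor of degree $\ge d$ cut on the line by the components of $\mu^{-1}(X)$ meeting smooth points of $X$.
\item Comparing degrees gives $S^*_d(X)\ne\p^r$ whenever $r>n+1+n/(d-1)$.
\item Lemma~\ref{lem_general_projection_preserves_dim_skd} shows general projections preserve $\dim S^*_d$, which reduces arbitrary $r$ to this case.
\end{enumerate}
The comparison of ramification order against focal degree, together with this projection reduction, is the idea your proposal is missing.
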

	
	We extend this to the locus of $k$-planes in Section \ref{sec_dimension_bound}.
	For that purpose, we use the following results.
	
	\begin{theorem}[{\cite[Theorem 2.4]{MR2441617}}]\label{lem_focal_locus_is_hypersurface_of_degree_r-k}
		Let $\Sigma\subseteq\g(k,r)$ be a congruence of $k$-planes in $\p^r$, i.e. an $(r-k)$-dimensional subvariety of $\g(k,r)$.
		Let $\Gamma\subseteq\Sigma\times\p^r$ be the incidence correspondence.
		\[
		\begin{tikzcd}
			\Gamma \ar[r,"\pi_2"]\ar[d,"\pi_1"] &\p^r\\
			\Sigma
		\end{tikzcd}
		\]
		Let $R$ be the ramification divisor of $\pi_2$, and pick any $L\in\Sigma$.
		Then the focal locus for $L$, i.e. the restriction of $R$ to $\pi_1^{-1}(L)$, is either a hypersurface of degree $r-k$ in $L$, or the entire $L$.
	\end{theorem}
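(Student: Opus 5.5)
The plan is to compute the differential of $\pi_2$ explicitly along the fiber $\pi_1^{-1}(L)\cong L\cong\p^k$. From that computation the ramification divisor, restricted to this fiber, will appear as the zero scheme of a single section of $\mathcal{O}_L(r-k)$. Such a section is either identically zero, in which case the focal locus is all of $L$, or nonzero, in which case its zero scheme is a hypersurface of degree $r-k$ in $L$.

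First I fix dimensions and a description of tangent spaces. Since $\Sigma$ has dimension $r-k$ and the fibers of $\pi_1$ are $k$-planes, $\Gamma$ has dimension $r$, so $\pi_2\colon\Gamma\to\p^r$ is a map between $r$-dimensional spaces. Its ramification divisor $R$ is cut out by the Jacobian determinant $\det(d\pi_2)$, viewed as a section of $\omega_\Gamma\otimes\pi_2^*\omega_{\p^r}^{-1}$.

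Next I compute $d\pi_2$ at a point $(L,x)$ with $L$ a smooth point of $\Sigma$. Write $\hat L\subseteq\c^{r+1}$ for the affine cone over $L$, so that $T_L\g(k,r)=\operatorname{Hom}(\hat L,\c^{r+1}/\hat L)$. Then
\[
T_{(L,x)}\Gamma=\{(\varphi,v)\mid \varphi\in T_L\Sigma,\ v\in T_x\p^r,\ v\equiv\varphi(\hat x)\bmod \hat L\}.
\]
The vertical directions $T_xL$ inject under $d\pi_2$. Modulo them, $d\pi_2$ is the evaluation map
\[
\mathrm{ev}_x\colon T_L\Sigma\to \c^{r+1}/\hat L,\qquad \varphi\mapsto\varphi(\hat x),
\]
suitably twisted. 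Hence $(L,x)\in R$ if and only if $\mathrm{ev}_x$, a linear map between two $(r-k)$-dimensional spaces, is not an isomorphism.

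Then I globalize over $L$. Since $\mathrm{ev}_x$ depends linearly on $\hat x$, these maps glue to a bundle map
\[
T_L\Sigma\otimes\mathcal{O}_L\longrightarrow N_{L/\p^r}\cong\mathcal{O}_L(1)^{\oplus(r-k)}.
\]
Its determinant is a section of $\det N_{L/\p^r}\otimes\det(T_L\Sigma)^{\vee}\cong\mathcal{O}_L(r-k)$, and this section agrees with the restriction of $\det(d\pi_2)$ to $\pi_1^{-1}(L)$. So the restriction of $R$ to $\pi_1^{-1}(L)$ is the zero scheme of a section of $\mathcal{O}_L(r-k)$, which gives exactly the dichotomy in the statement.

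The step I expect to be the main obstacle is $L$ a singular point of $\Sigma$, where $T_L\Sigma$ has the wrong dimension and $\Gamma$ may be singular along the fiber. My first attempt is to replace $\Sigma$ by a resolution or its normalization $\wt\Sigma\to\Sigma$ and pull back the incidence correspondence; the computation above then applies verbatim at every point of $\wt\Sigma$ over $L$. The fiber statement is then read off on each such preimage. The remaining bookkeeping is to check that the ramification divisor defined on the pulled-back family is the one intended in the statement.
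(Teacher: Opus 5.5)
The paper gives no proof of this statement. It is quoted from \cite[Theorem 2.4]{MR2441617} and used as a black box, so your argument supplies what the paper outsources. Your proof is correct at smooth points of $\Sigma$, and it is the standard characteristic-map argument. Near $F=\pi_1^{-1}(L)$ the map $\pi_1$ is smooth, so $N_{F/\Gamma}\cong T_L\Sigma\otimes\mathcal{O}_F$. Also $d\pi_2$ restricts to an isomorphism $T_F\to T_L$. Hence $\det(d\pi_2)|_F$ is, up to a unit, the determinant of $T_L\Sigma\otimes\mathcal{O}_L\to N_{L/\p^r}\cong\mathcal{O}_L(1)^{\oplus(r-k)}$. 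A useful feature of your route is that it gives the conclusion scheme-theoretically. That is what the proof of \Cref{thm_dim_skd<=r-1} needs when it converts the scheme containment $(c-1)H\subseteq R\cap L_0$ into $(c-1)d\le r-k$.

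The bookkeeping you defer at singular points cannot be completed for the statement read literally, but it does not need to be.
\begin{itemize}
\item Since $\Gamma\to\Sigma$ is the projectivization of a vector bundle, it is a Zariski-locally trivial $\p^k$-bundle. So $\Gamma$ is singular along the entire fibre over a singular $L$, and $\pi_2$ has no ramification divisor there in the usual sense.
\item If you instead take $R$ to be the closure of the ramification divisor over $\Sigma_{\textnormal{sm}}$, its trace on that fibre will in general contain the focal hypersurfaces of every branch of $\Sigma$ through $L$. So the dichotomy can genuinely fail there.
\end{itemize}
The correct formulation is for a smooth parameter variety $\wt\Sigma$ with a morphism $\wt\Sigma\to\g(k,r)$, and your resolution argument proves exactly that. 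It needs one adjustment to ``verbatim'': $\mathrm{ev}_x$ is now precomposed with the differential of $\wt\Sigma\to\g(k,r)$. Where that differential is not injective, the determinant vanishes identically, which is the ``entire $L$'' case. This is precisely how the paper applies the result: to a desingularization of $\Sigma_0$, which is not a subvariety of $\g(k,r)$, and only at a general fibre $L_0$. So your proof covers every use the paper makes of it.
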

	
	\begin{lemma}[{\cite[Lemma A.5]{MR3705293}}]\label{lem_order_of_ramification_divisor}
		Let $\Gamma$ and $P$ be $r$-dimensional smooth projective varieties.
		Let $\mu:\Gamma\to P$ be a surjective generically finite morphism.
		Let $X$ be a codimension $c$ subvariety of $P$, and let $E_0$ be a prime divisor of $\Gamma$ such that $\mu(E_0)\subseteq X$ and $\mu(E_0)\cap X_{\textnormal{sm}}\neq \varnothing$.
		Let $\mathcal{I}_X$ be the ideal sheaf of $X$ in $P$, and define $a:=\textup{ord}_{E_0}(\mathcal{I}_X\cdot \mathcal{O}_\Gamma)$.
		Denote the ramification divisor of $\mu$ as $R$.
		Then $\textup{ord}_{E_0}(R)\geq ca-1$.
	\end{lemma}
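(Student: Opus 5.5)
The plan is a local computation of the Jacobian determinant of $\mu$ at a general point of $E_0$. The ramification divisor is defined by $K_\Gamma=\mu^*K_P+R$, so locally $R$ is the divisor of $\det(d\mu)$ written in local coordinates. It therefore suffices to show that $\det(d\mu)$ vanishes to order at least $ca-1$ along $E_0$. Since $\textup{ord}_{E_0}$ can be computed at any point of a dense open subset of $E_0$, we may work near a general point $p\in E_0$.

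\emph{Choice of point and coordinates.} The set $\mu^{-1}(X_{\textnormal{sm}})\cap E_0$ is open in $E_0$. It is nonempty because $\mu(E_0)\cap X_{\textnormal{sm}}\neq\varnothing$, so it is dense in the irreducible $E_0$. Choose $p$ in this set, also a smooth point of $(E_0)_{\mathrm{red}}$, and put $q=\mu(p)\in X_{\textnormal{sm}}$. Since $P$ is smooth and $X$ is smooth of codimension $c$ at $q$, there are local (analytic or étale) coordinates $y_1,\dots,y_r$ at $q$ with $\mathcal{I}_X=(y_1,\dots,y_c)$ near $q$. Since $\Gamma$ is smooth, there are local coordinates $t,z_2,\dots,z_r$ at $p$ with $E_0=\{t=0\}$.

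\emph{Orders of the pulled-back equations.} Near $p$ the ideal $\mathcal{I}_X\cdot\mathcal{O}_\Gamma$ is generated by $f_i:=\mu^*y_i$ for $i=1,\dots,c$. Hence $a=\min_i \textup{ord}_{E_0}(f_i)$, and in particular each of these orders is at least $a$. So for $i\le c$ we can write $f_i=t^{a}g_i$ with $g_i$ regular near $p$. Then
\[
\partial f_i/\partial z_j=t^a\,\partial g_i/\partial z_j \quad\text{is divisible by } t^a,
\qquad
\partial f_i/\partial t=t^{a-1}\bigl(a g_i+t\,\partial g_i/\partial t\bigr) \quad\text{is divisible by } t^{a-1}.
\]
The remaining rows $f_{c+1},\dots,f_r$ are arbitrary regular functions.

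\emph{Determinant estimate.} Expand $\det\bigl(\partial f_i/\partial(t,z_2,\dots,z_r)\bigr)$ by the Leibniz formula. Each term is a product containing exactly one entry from each of the first $c$ rows, with these entries lying in distinct columns. At most one of them can lie in the $t$-column, so each term is divisible by $t^{a-1}\cdot t^{a(c-1)}=t^{ca-1}$. Therefore $\textup{ord}_{E_0}(R)\ge ca-1$. If $a=0$ the bound is trivial, since $R$ is effective because $\mu$ is generically finite between smooth varieties.

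\emph{Main obstacle.} The only point needing care is the first step. We must justify that the general point of $E_0$ maps into $X_{\textnormal{sm}}$, and that transversal coordinates for $X$ at $q$ exist and pull back to generators of $\mathcal{I}_X\cdot\mathcal{O}_\Gamma$. Both follow from the hypotheses and the smoothness of $P$ and $\Gamma$. After that, the determinant expansion is routine.
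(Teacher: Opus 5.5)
Your proposal is correct and complete. At a general point of $E_0$ lying over $X_{\textnormal{sm}}$, the first $c$ rows of the Jacobian have their $t$-column entries divisible by $t^{a-1}$ and all their other entries divisible by $t^{a}$, so every Leibniz term is divisible by $t^{ca-1}$.

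The paper gives no proof of its own here; it imports the lemma from \cite{MR3705293}, and your local Jacobian computation is the standard argument for it. One small remark: $a\ge 1$ holds automatically because $\mu(E_0)\subseteq X$, so your separate $a=0$ case never occurs.
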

	
	\subsection{The fibers of the morphism defined by $d$-forms satisfying $\K_d$}\label{sec_vermeire}
	
	In this subsection, we review some results on the fibers of the morphism defined by $d$-forms satisfying $\K_d$, as studied by Vermeire \cite{MR1818982,MR1985451}.
	
	\begin{definition}
		For homogeneous polynomials $F_0,\dots,F_s$ in a polynomial ring, the \textit{trivial syzygies of $F_0,\dots,F_s$} are the syzygies of the form
		\[(0,\dots,F_j,\dots,-F_i,\dots,0)\]
		corresponding to the trivial relation $F_jF_i+(-F_i)F_j=0$.
	\end{definition}
	
	\begin{definition}\label{def_Kd}
		Let $X$ be a projective scheme. We say \textit{$(X,F_0,\dots,F_s)$ satisfies the property $\K_d$} if $X$ is scheme-theoretically defined by $d$-forms $F_0,\dots,F_s$ whose trivial syzygies are generated by linear syzygies. We say \textit{$X$ satisfies the property $\K_d$} if there exist such $d$-forms. 
	\end{definition}
	
	Let $X\subseteq\p^r$ be a projective scheme satisfying $\K_{d}$ by $d$-forms $F_0,\dots,F_s$.
	Let $V\subseteq H^0(\p^r,\mathcal{O}_{\p^r}(d))$ be the linear system spanned by $F_0,\dots,F_s$ and $\varphi:\p^r\backslash X\to \p^s$ be the corresponding map.
	
	\begin{remark}\label{rem_k-plane_maps_to_a_point}
		Suppose a $k$-plane $L$ intersects $X$ in a hypersurface of degree $d$ of $L$, and let $f\in H^0(L,\mathcal{O}_L(d))$ be the element that defines the hypersurface.
		Since $\varphi$ is defined by a subsystem of $H^0(\p^r,\mathcal{O}_{\p^r}(d))$, the morphism $\varphi|_L$ should be defined by a subsystem of $H^0(L,\mathcal{O}_{L}(d))$ with the base scheme $L\cap X$.
		Hence the linear system corresponding to $\varphi|_L$ is $$\Span(f)\subseteq H^0(L,\mathcal{O}_L(d)).$$ Thus $L$ maps to a point under $\varphi$.
	\end{remark}
	
	Let $\Gamma_{\varphi}\subseteq\p^r\times\p^s$ be the closure of the graph corresponding to the rational map $\varphi:\p^r\dashrightarrow\p^s$.
	Let $\pi_1:\Gamma_\varphi\to\p^r$ and $\pi_2:\Gamma_\varphi\to\p^s$ be the induced projections.
	Then $\pi_1:\Gamma_\varphi\to\p^r$ is in fact the blowing up of $\p^r$ along $X$. Let $E$ be the exceptional divisor of the blowing up.
	
	\[
	\begin{tikzcd}
		\bl_X\p^r=\Gamma_\varphi \ar[r,"\pi_2"]\ar[d,"\pi_1"] &\p^s \\
		\p^r \ar[ur,"\varphi",dotted,swap]
	\end{tikzcd}
	\]
	
	Then we have the following result.
	\begin{proposition}[{\cite[Proposition 2.8]{MR1818982}}]\label{prop_fiber_of_lifted_kd_map}
		Let $X\subseteq\p^r$ be a projective scheme satisfying $\K_d$ not containing a line. Pick $a\in\pi_2(\Gamma_\varphi)$. Then \[\pi_2^{-1}(a)=\p^k\times\{a\}\subseteq\p^r\times\p^s\] where
		\begin{enumerate}[label=(\alph*)]
			\item $k=0$, or
			\item $\pi_1(\p^k\times\{a\})=\p^k\subseteq\p^r$ intersects $X$ in a hypersurface of degree $d$ in $\p^k$.
		\end{enumerate}
	\end{proposition}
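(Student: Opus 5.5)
The idea is to describe $\Gamma_\varphi$ by bilinear equations coming from the linear syzygies, and then read off the fibers of $\pi_2$ as linear spaces cut out by linear forms.

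\textbf{Step 1: the bilinear locus $Z$.} Let $y_0,\dots,y_s$ be coordinates on $\p^s$, and let $(L_{l0},\dots,L_{ls})$, $1\le l\le m$, generate the linear syzygies of $F_0,\dots,F_s$. Define
\[
Z:=V\Bigl(\textstyle\sum_k L_{lk}(x)\,y_k \ :\ 1\le l\le m\Bigr)\subseteq\p^r\times\p^s,
\]
a locus cut out by forms of bidegree $(1,1)$. The graph of $\varphi$ lies in $Z$, because setting $y=F(x)$ gives $\sum_k L_{lk}F_k=0$. Since $Z$ is closed, $\Gamma_\varphi\subseteq Z$.

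\textbf{Step 2: $Z$ agrees with the graph away from $X$.} Here I use $\K_d$. The trivial syzygy $F_je_i-F_ie_j$ can be written as $\sum_l c_l(x)(L_{l0},\dots,L_{ls})$ with polynomial coefficients $c_l$. Pairing both sides with $y$ gives
\[
F_j(x)y_i-F_i(x)y_j=\sum_l c_l(x)\sum_k L_{lk}(x)y_k .
\]
The right side vanishes on $Z$. So if $(x,y)\in Z$ with $x\notin X$, the vectors $y$ and $F(x)\neq0$ are proportional, i.e. $y=\varphi(x)$. Hence $Z\setminus(X\times\p^s)$ is exactly the graph of $\varphi$ on $\p^r\setminus X$. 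This is the step where the hypothesis really enters, and I expect it to be the main point to get right.

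\textbf{Step 3: the fibers are linear.} Fix $a\in\pi_2(\Gamma_\varphi)$ and set
\[
\Lambda_a:=\{x\mid \textstyle\sum_k L_{lk}(x)a_k=0\ \text{for all } l\},
\]
which is a linear subspace $\p^k$. By Step 1, $\pi_2^{-1}(a)\subseteq\Lambda_a\times\{a\}$.

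If $k=0$, the fiber is nonempty and contained in a single point, so it equals $\p^0\times\{a\}$.

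If $k\ge1$, then $\Lambda_a\not\subseteq X$ because $X$ contains no line, so $\Lambda_a\setminus X$ is dense in $\Lambda_a$. By Step 2, every $x\in\Lambda_a\setminus X$ satisfies $\varphi(x)=a$. Therefore $(\Lambda_a\setminus X)\times\{a\}\subseteq\Gamma_\varphi$, and taking closures gives $\Lambda_a\times\{a\}\subseteq\Gamma_\varphi$. This yields the equality $\pi_2^{-1}(a)=\p^k\times\{a\}$.

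\textbf{Step 4: the intersection with $X$ is a hypersurface.} For $x\in\Lambda_a\setminus X$, the vector $F(x)$ is proportional to $a$. Hence the minors $a_iF_j-a_jF_i$ vanish on a dense subset of $\Lambda_a$, and so vanish identically on $\Lambda_a$. Choosing an index $i$ with $a_i\neq0$ and putting $f:=F_i|_{\Lambda_a}/a_i$, we get $F_j|_{\Lambda_a}=a_jf$ for every $j$. Here $f$ is a $d$-form on $\Lambda_a$, and $f\neq0$ since $\Lambda_a\not\subseteq X$. Because the $F_j$ define $X$ scheme-theoretically,
\[
\Lambda_a\cap X=V\bigl(F_0|_{\Lambda_a},\dots,F_s|_{\Lambda_a}\bigr)=V(f)
\]
as schemes, which is a hypersurface of degree $d$ in $\p^k$, as in \Cref{rem_k-plane_maps_to_a_point}.
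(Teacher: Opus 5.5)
Your proposal is correct and follows essentially the same route as the paper: your $Z$ is the paper's bilinear scheme $T$, your Step 2 proves the claim the paper calls ``easy to see'' (via $\K_d$, $T$ lies in the scheme cut out by the forms $F_iy_j-F_jy_i$, so it agrees with $\Gamma_\varphi$ off the exceptional divisor), and the fiber over $a$ is the linear space cut out by the forms $\sum_j L_{ij}a_j$, which for $k\ge 1$ is not contained in $X$ because $X$ contains no line. Your Step 4, which derives $F_j|_{\Lambda_a}=a_jf$ directly, is an explicit version of the paper's appeal to the one-dimensional restricted linear system (\Cref{rem_k-plane_maps_to_a_point}).
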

	
	\begin{proof}
		Let $[x_0,\dots,x_r;y_0,\dots,y_s]$ be the bihomogeneous coordinates of $\p^r\times\p^s$. Let \[\{(L_{i0},\dots,L_{is})\}_{1\leq i\leq m}\] be the linear syzygies of $F_0,\dots,F_s$ that generate the trivial syzygies of $F_i$'s. Let $T$ and $S$ be the subschemes of $\p^r\times\p^s$ defined by the bilinear forms $\{\sum\limits_{j=0}^s L_{ij}y_j\}_{1\leq i\leq m}$ and the forms $\{F_iy_j-F_jy_i\}_{i\neq j}$ of bidegree $(d,1)$, respectively. Then it is easy to see $\Gamma_\varphi\subseteq T\subseteq S$ and they are all identified off of $E$ as schemes.
		
		Pick $a\in\pi_2(\Gamma_\varphi)$. Let $T_a$ be the fiber over $a$ of $T\to\p^s$.
		We may assume $a=[1,0,\dots,0]\in\p^s$ by a coordinate change. Then $T_a$ is scheme-theoretically defined by the bihomogeneous forms $$\{\sum\limits_{j=0}^s L_{ij}y_j\}_{1\leq i\leq m}\cup\{y_1,\dots,y_s\},$$ hence defined by $\{L_{10}y_0,\dots,L_{m0}y_0,y_1,\dots,y_s\}$. Therefore \[T_a=\p^k\times[1,0,\dots,0]\subseteq\p^r\times\p^s\] where $\p^k$ is the linear space in $\p^r$ defined by $\{L_{i0}\}_{1\leq i\leq m}$. If $k=0$, then we are done. Suppose $k\geq 1$.
		Then $\p^k$ is not contained in $X$ as $X$ does not contain a line. Hence $T_a$ is not contained in $E$. With the fact that $\pi_2^{-1}(a)=T_a$ off of $E$, we get $\pi_2^{-1}(a)=T_a$. As it maps to a point $a$, the restriction of $\varphi$ to $\pi_1(\pi_2^{-1}(a))=\p^k$ corresponds to the linear system spanned by an element in $H^0(\p^k,\mathcal{O}_{\p^k}(d))$. Hence $\pi_1(\pi_2^{-1}(a))=\p^k$ intersects $X$ in a hypersurface of degree $d$ in $\p^k$.
	\end{proof}
	
	\begin{remark}[{cf. \cite[Remark 2.11]{MR1818982}}]\label{rem_fiber_of_lifted_kd_map_without_assumption}
		Note that if we drop the condition that $X$ does not contain a line, then now there are three possibilities:
		\begin{enumerate}[label=(\alph*)]
			\item $k=0$, or
			\item $\pi_1(\p^k\times\{a\})=\p^k\subseteq\p^r$ intersects $X$ in a hypersurface of degree $d$ in $\p^k$, or
			\item $\pi_2^{-1}(a)$ is contained in $E$.
		\end{enumerate}	
	\end{remark}
	
	\subsection{d-ple point loci of outer projections}
	\begin{definition}
		Let $X\subseteq\p^r$ be a projective variety and $\pi_p:X\to\p^{r-1}$ be an outer projection of $X$ from a point $p\in\p^r\backslash X$.
		We denote its image as $X_p$ and define the \textit{$d$-ple point locus of $\pi_p$} to be the set
		\[
			Z_d(p,X):=\{x\in\p^{r-1}\mid\leng\pi_p^{-1}(x)\geq d\}\subseteq\p^{r-1},
		\]
		and the \textit{$d$-secant locus of $\pi_p$} to be the set
		\[
			\wt{Z}_d(p,X):=\{x\in X\mid\leng\pi_p^{-1}(\pi_p(x))\geq d	\}\subseteq X\subseteq\p^r.
		\]
	\end{definition}
	
	When $X\subseteq\p^r$ satisfies $\N_{d,2}$, it is known that the $d$-ple point locus and $d$-secant locus have nice properties.
	We define $\tan(X):=\bigcup_{x\in X}T_xX\subseteq\p^r$.
	
	\begin{theorem}[{\cite[Theorem 3.3]{MR4633626}}]\label{thm_d-ple_linear}
		Let $X\subseteq\p^r$ be a nondegenerate variety satisfying $\N_{d,2}$ ($d\ge 2$) and pick $p\in \left(\sigma_2X\cup \tan(X)\right)\backslash X$.
		Then
		\begin{enumerate}
			\item $Z_d(p,X)$ is either empty or a linear space, and
			\item $\wt{Z}_d(p,X)$ is a hypersurface of degree $d$ in the linear span $\langle p,Z_d(p,X)\rangle$ unless $Z_d(p,X)$ is empty.
		\end{enumerate}
	\end{theorem}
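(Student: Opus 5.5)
The plan is to read both statements off the fibre structure of the rational map $\varphi$ given by Vermeire's result, \Cref{prop_fiber_of_lifted_kd_map} together with \Cref{rem_fiber_of_lifted_kd_map_without_assumption}.

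\emph{Setup.} Since $X$ satisfies $\N_{d,2}$, we may replace the generators of $I_X$ by $d$-forms $F_0,\dots,F_s$ that define $X$ scheme-theoretically and still satisfy $\N_{d,2}$, as explained in the introduction. In particular their trivial syzygies are generated by linear syzygies, so $(X,F_0,\dots,F_s)$ satisfies $\K_d$. Let $\varphi\colon\p^r\dashrightarrow\p^s$ be the associated map and $\Gamma_\varphi=\bl_X\p^r$. Since $p\notin X$, $\varphi$ is defined at $p$; put $a:=\varphi(p)$. By \Cref{prop_fiber_of_lifted_kd_map} and \Cref{rem_fiber_of_lifted_kd_map_without_assumption}, $\pi_2^{-1}(a)=\p^k\times\{a\}$, where $\p^k$ is the linear space cut out by the forms $L_{i0}$ in suitable coordinates. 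This fibre contains the point $(p,a)$, which is not in $E$, so alternative (c) of the remark is excluded. Hence either $k=0$, or $\Lambda:=\pi_1(\pi_2^{-1}(a))\cong\p^k$ is a linear space through $p$ with $\Lambda\cap X$ a degree $d$ hypersurface of $\Lambda$.

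\emph{Key step: every $d$-secant line through $p$ lies in $\Lambda$.} Let $l\ni p$ be a line with $\leng(l\cap X)\ge d$.
\begin{itemize}
\item If the length were $\ge d+1$, every $F_i|_l$ would vanish, since each has degree $d$. Then $l\subseteq X$ because the $F_i$ cut out $X$ scheme-theoretically, contradicting $p\notin X$. So the length is exactly $d$.
\item The restrictions $F_i|_l$ all lie in the one-dimensional space $H^0(\mathcal{O}_l(d))\otimes \mathcal{I}_{l\cap X}$. As in \Cref{rem_k-plane_maps_to_a_point}, this means $\varphi$ is constant on $l\setminus X$, with value $\varphi(p)=a$.
\item Therefore $l\setminus X\subseteq\pi_1(\pi_2^{-1}(a))=\Lambda$, and so $l\subseteq\Lambda$.
\end{itemize}
The point to get right here is that the fibre of $\varphi$ over $a$, taken on $\p^r\setminus X$, is exactly $\Lambda\setminus X$. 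This follows from the identification $\Gamma_\varphi=T$ off $E$ used in the proof of \Cref{prop_fiber_of_lifted_kd_map}. I expect this to be the main (though mild) obstacle, together with checking that $p$ itself lies in $\Lambda$ and not on $E$.

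\emph{Conversely.} Suppose $k\ge 1$, and let $l\subseteq\Lambda$ be a line through $p$. Because $p\notin\Lambda\cap X$, the line $l$ meets the degree $d$ hypersurface $\Lambda\cap X$ of $\Lambda$ in a scheme of length exactly $d$. So $l$ is a $d$-secant line.

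\emph{Conclusion.}
\begin{enumerate}
\item The $d$-secant lines through $p$ are precisely the lines through $p$ in $\Lambda$. Hence $Z_d(p,X)=\pi_p(\Lambda\setminus\{p\})\cong\p^{k-1}$ is a linear space; it is empty when $k=0$.
\item $\wt{Z}_d(p,X)=\Lambda\cap X$ is a hypersurface of degree $d$ in $\Lambda=\langle p,Z_d(p,X)\rangle$.
\end{enumerate}

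The hypothesis $p\in(\sigma_2X\cup\tan(X))\setminus X$ enters only through $p\notin X$. Outside $\sigma_2X\cup\tan(X)$ no line through $p$ is even $2$-secant, so there the statement is vacuous.
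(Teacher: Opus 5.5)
Your proof is correct. The paper gives no proof of this statement: it is quoted from \cite[Theorem 3.3]{MR4633626}, so there is no internal argument to match yours against. Your route derives it from Vermeire's fibre description (\Cref{prop_fiber_of_lifted_kd_map} together with \Cref{rem_fiber_of_lifted_kd_map_without_assumption}). That is the same mechanism the paper uses for \Cref{thm_equations_of_skd}.

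The steps all hold up:
\begin{itemize}
\item Since $\Gamma_\varphi\subseteq T\subseteq S$ with equality over $\p^r\setminus X$, you get $\varphi^{-1}(\varphi(p))=\Lambda\setminus X$ with $\Lambda$ linear. The inclusion $\Gamma_\varphi\subseteq T$ gives one containment, and $\K_d$, through $T\subseteq S$, gives the other.
\item A $d$-secant line $l$ through $p\notin X$ meets $X$ in length exactly $d$. Hence the $F_i|_l$ are proportional, $\varphi$ is constant on $l\setminus X$, and $l\subseteq\Lambda$ follows.
\item Writing $F_i|_\Lambda=a_if$ with $f(p)\neq 0$ gives the converse. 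It also gives $\wt{Z}_d(p,X)=\Lambda\cap X=V(f)$ as a degree $d$ hypersurface of $\Lambda=\langle p,Z_d(p,X)\rangle$.
\end{itemize}

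Your route buys three things.
\begin{itemize}
\item It makes the result self-contained within the paper.
\item It shows the conclusion holds for every $p\in\p^r\setminus X$ and every projective scheme cut out by $d$-forms satisfying $\K_d$. Nondegeneracy, integrality and $p\in\sigma_2X\cup\tan(X)$ play no role, as you observe.
\item It identifies $\langle p,Z_d(p,X)\rangle$ with $\pi_1(\pi_2^{-1}(\varphi(p)))=\{x\mid M(p)x=0\}$. This gives $\dim Z_d(p,X)=r-\rank M(p)-1$ directly, so the detour through \Cref{prop_Skd_characterization} in the proof of \Cref{cor_depth_formula} becomes unnecessary.
\end{itemize}
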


	In particular, when $d=2$, the secant locus $\wt{Z}_2(p,X)$ of $\pi_p$ plays an important role when we determine projective invariants of $X_p$.
	\begin{proposition}[{\cite[Proposition 4.11]{MR2774656}}]\label{prop_Xp_using_secant_locus}
		Let $X\subseteq\p^r$ be a nondegenerate variety of dimension $n$ satisfying $\N_{2,2}$ and pick $p\in \p^r\backslash X$.
		Then
		\[
			\depth X_p=\min(\depth X, \dim\wt{Z}_2(p,X)+2)
		\]
		if $H^i(\mathcal{O}_X(j))=0$ for all $1\leq i\leq n-1$ and $j\leq -i$.
	\end{proposition}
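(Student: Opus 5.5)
This is a cited result (Proposition 4.11 of \cite{MR2774656}), so the plan follows the standard approach: compare $S_X$ with the coordinate ring $S_{X_p}$ through the projection. Choose coordinates with $p=[0,\dots,0,1]$ and set $R=\c[x_0,\dots,x_{r-1}]$. Then $S_{X_p}\subseteq S_X$, and $S_X$ is a finitely generated graded $R$-module because $p\notin X$. Under $\N_{2,2}$ the inner structure of $S_X$ over $R$ is governed by the partial elimination ideal theory of Green: $S_X$ as an $R$-module is generated by $1$ and $x_r$. This gives an exact sequence of graded $R$-modules
\[
0\to S_{X_p}\to S_X\to (R/K)(-1)\to 0,
\]
where $K$ is the first partial elimination ideal. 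Set-theoretically $K$ defines the double point locus $Z_2(p,X)$, and $\N_{2,2}$ forces $K$ to be generated by linear forms. Hence $R/K$ is the coordinate ring of a linear space $\Lambda$ with $\dim\Lambda=\dim\wt{Z}_2(p,X)-1$. This uses \Cref{thm_d-ple_linear}: $\wt{Z}_2$ is a quadric hypersurface in $\langle p,\Lambda\rangle$. If $p\notin\sigma_2X\cup\tan(X)$, then $K=R$ and $\wt Z_2=\varnothing$, giving $\dim+2=1$ with the convention $\dim\varnothing=-1$.

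The key steps are as follows.

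\textbf{Step 1.} Establish the sequence above with $K$ linear. Here $\N_{2,2}$ is used to show that the second partial elimination ideal vanishes, so that $x_r^2\in S_{X_p}+x_rS_{X_p}$, and that $K$ is linear.

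\textbf{Step 2.} Compute depths. We have $\depth R/K=\dim\Lambda+1=\dim\wt Z_2+1$, and $R/K$ is Cohen--Macaulay. Depth over $R$ equals depth over $S$ for $S_X$.

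\textbf{Step 3.} Apply the depth lemma to the short exact sequence. It gives $\depth S_{X_p}\ge\min(\depth S_X,\depth R/K+1)$. Equality holds whenever $\depth S_X\ne\depth R/K$, and a case check handles the coincident case.

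To make this precise we use local cohomology $H^i_{\mathfrak m}$ and the long exact sequence. Write $t=\dim\wt Z_2+2=\depth R/K+1$. For $i<t$ we need the connecting map $H^{i-1}_{\mathfrak m}((R/K)(-1))\to H^i_{\mathfrak m}(S_{X_p})$. This map is zero for $i-1<t-1$ because the source vanishes.

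The upper bound $\depth S_{X_p}\le t$ is the main obstacle. We must show that the nonzero module $H^{t-1}_{\mathfrak m}(R/K)$ injects into $H^t_{\mathfrak m}(S_{X_p})$, that is, that $H^{t-1}_{\mathfrak m}(S_X)\to H^{t-1}_{\mathfrak m}(R/K)(-1)$ is not surjective. For $t-1\ge 2$, the source is $\bigoplus_j H^{t-2}(\mathcal{O}_X(j))$. The module $H^{t-1}_{\mathfrak m}(R/K)(-1)$ is concentrated in degrees $\le -(t-1)+1$, since $R/K$ is a polynomial ring in $t-1$ variables. The hypothesis $H^i(\mathcal{O}_X(j))=0$ for $1\le i\le n-1$ and $j\le -i$ kills the source exactly in those degrees, so the map vanishes and the injection follows. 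The small cases $t\le 2$, which involve $H^0_{\mathfrak m}$ and $H^1_{\mathfrak m}$, are handled directly: $S_X$ is a domain and $S_{X_p}\subseteq S_X$, so we check $H^1$ degreewise using the normality-type information encoded in degrees $\le 0$. When $t>\depth S_X$, the lower-degree local cohomology of $S_X$ injects into that of $S_{X_p}$ by the same degree argument, which gives equality with $\depth X$.
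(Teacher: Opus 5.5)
The overall architecture is right. You present $S_X$ over $R=\c[x_0,\dots,x_{r-1}]$ as an extension $0\to S_{X_p}\to S_X\to (R/K)(-1)\to 0$ with $K=K_1(I_X)$, and then run local cohomology, using the vanishing hypothesis in the degrees where $H^{\bullet}_{\mathfrak m}(R/K)$ lives. The paper gives no proof of its own; it only quotes \cite{MR2774656}.

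\textbf{Gap: linearity of $K$ is asserted, not proved.} The whole computation needs $K$ to be generated by linear forms \emph{as an ideal}. Without this, $R/K$ need not be Cohen--Macaulay, and your degree bookkeeping collapses. This cannot come from \Cref{thm_d-ple_linear}, which is only set-theoretic. This is where $\N_{2,2}$ actually enters. The Koszul complex on $x_0,\dots,x_r$ is the mapping cone of $x_r$ acting on the Koszul complex on $x_0,\dots,x_{r-1}$, which gives exact sequences
\[
\tor^S_{i+1}(S_X,\c)_j\to\tor^R_i(S_X,\c)_{j-1}\xrightarrow{\,x_r\,}\tor^R_i(S_X,\c)_j\to\tor^S_i(S_X,\c)_j .
\]
For $i=0$, the vanishing of $\tor^S_0(S_X,\c)_j$ for $j\ge 1$ and of $\tor^S_1(S_X,\c)_j$ for $j\ge 3$, together with finite length, gives $\tor^R_0(S_X,\c)_j=0$ for $j\ge 2$. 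Hence $S_X=R+Rx_r$. For $i=1$, the vanishing of $\tor^S_2(S_X,\c)_j$ for $j\ge 4$ makes $x_r$ injective on $\tor^R_1(S_X,\c)_{\ge 3}$, so $\tor^R_1(S_X,\c)_j=0$ for $j\ge 3$. Thus the relation module of $(1,x_r)$ is generated in degrees $\le 2$. Its projection to the $x_r$-coordinate is $K(-1)$, so $K$ is generated in degrees $\le 1$. Nondegeneracy rules out constants, so $K$ is generated by linear forms.

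\textbf{Error in the base case and two slips.} For $p\notin\sigma_2X\cup\tan(X)$ you cannot have $K=R$: $1\in K_1(I_X)$ would mean a linear form $x_r+h\in I_X$. Taken literally, your sequence would then give $S_{X_p}=S_X$ and $\depth X_p=\depth X$. In fact an isomorphic projection of a nondegenerate variety is not linearly normal, so it has depth $1$. The correct statement is $K=\mathfrak m_R$, so $R/K=\c$ has depth $0=\dim\varnothing+1$, and $\c(-1)=H^0_{\mathfrak m}(\c(-1))\hookrightarrow H^1_{\mathfrak m}(S_{X_p})$. Next, $\dim\Lambda=\dim\wt Z_2(p,X)$, not $\dim\wt Z_2(p,X)-1$, because $\wt Z_2$ is a hypersurface in $\langle p,\Lambda\rangle$. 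Your next line already uses the correct value. Also, the second partial elimination ideal is the unit ideal, not zero.

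\textbf{Where the hypothesis is really used.} The bound $\depth X_p\le t$ is not the obstacle when $\depth X\ge t$, since then $H^{t-1}_{\mathfrak m}(S_X)=0$ already. The hypothesis is needed exactly in the coincident case $\depth X=t-1$. There
\[
H^{t-1}_{\mathfrak m}(S_{X_p})=\ker\bigl(H^{t-1}_{\mathfrak m}(S_X)\to H^{t-1}_{\mathfrak m}((R/K)(-1))\bigr)
\]
must be shown nonzero. Your degree argument shows this map is zero, which settles it.
\begin{itemize}
\item For $t=2$, use $H^0(\mathcal O_X(j))=0$ for $j<0$ and $H^0(\mathcal O_X)=\c$.
\item The case $\dim\wt Z_2=n$ lies outside the hypothesis range. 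It occurs only for a quadric hypersurface, where $X_p=\p^n$ and the formula holds directly.
\end{itemize}
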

	
	\section{Set-theoretic equations of $S_{k,d}(X)$}\label{sec:Skd_eqns}
	First, we extend the notions $\Sigma_d(X)$, $\Sigma^*_d(X)$, $S_d(X)$, and $S^*_d(X)$ defined in \Cref{def_sd*}. The following notions agree with \Cref{def_sd*} when $k=1$.
	
	\begin{definition}\label{def_skd}
		For a projective scheme $X\subseteq \p^r$, $k\geq 1$, and $d\geq 2$, define
		{\small
		\begin{itemize}[leftmargin=10pt]
			\item $\Sigma_{k,d}(X)= \left\{L\in\g(k,r)\,\middle\vert\,
			\begin{aligned}
				&L\cap X\textnormal{ scheme-theoretically contains a hypersurface}\\
				&\textnormal{of degree $\geq d$ in $L$}
			\end{aligned}
			\right\}_{\textstyle\raisebox{4pt},}$\\
			\item $\Sigma^*_{k,d}(X) = \left\{L\in\g(k,r)\,\middle\vert\,
			\begin{aligned}
				&L\cap X\textnormal{ scheme-theoretically contains a hypersurface}\\
				&\textnormal{of degree $\geq d$ in $L$ and }\dim(L\cap X_{\textnormal{sing}})\leq k-2
			\end{aligned}
			\right\}_{\textstyle\raisebox{4pt},}$\\
			\item $S_{k,d}(X)=\bigcup\limits_{L\in \Sigma_{k,d}(X)}L\subseteq\p^r,\text{ and }$\\
			\item $S^*_{k,d}(X)=\bigcup\limits_{L\in \Sigma^*_{k,d}(X)}L\subseteq\p^r.$
		\end{itemize}
		}
	\end{definition}

	We also denote
	\begin{align*}
			\Sigma_{k,\infty}(X)&=\{L\in\g(k,r)\mid L\subseteq X\}\text{ and}\\
			S_{k,\infty}(X)&=\bigcup\limits_{L\in \Sigma_{k,\infty}(X)}L\subseteq\p^r.
	\end{align*}
	By definition, we have $\Sigma_{k,\infty}(X)\subseteq \Sigma_{k,d}(X)$ and $S_{k,\infty}(X)\subseteq S_{k,d}(X)$.
	
	\begin{remark}\label{rem_skd=skd*_if_sing_small}
		If $\dim X_{\text{sing}}\leq k-2$, in particular, if $X$ is smooth, then \[\Sigma_{k,d}(X)=\Sigma^*_{k,d}(X)\text{ and }S_{k,d}(X)=S^*_{k,d}(X).\]
	\end{remark}

	\begin{proposition}\label{prop:Skd_closed}
		The loci $\Sigma_{k,d}(X)\subseteq \g(k,r)$ and $S_{k,d}(X)\subseteq\p^r$ are closed.
	\end{proposition}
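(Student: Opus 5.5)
We will show that $\Sigma_{k,d}(X)$ is closed in $\g(k,r)$ by exhibiting it as the image of a closed set under a proper map. Then $S_{k,d}(X)$ is closed because it is the image of a closed subset of the universal family $\mathcal{U}=\{(L,x)\mid x\in L\}\subseteq \g(k,r)\times\p^r$. Concretely, $S_{k,d}(X)=\pi_2(\pi_1^{-1}(\Sigma_{k,d}(X)))$, and $\pi_2:\mathcal{U}\to\p^r$ is proper since $\g(k,r)$ is projective.

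For $\Sigma_{k,d}(X)$, first reduce "contains a hypersurface of degree $\ge d$" to a finite union of conditions. A hypersurface $D\subseteq L$ of degree $e\ge d$ contains a hypersurface of degree exactly $d$, or of degree $e$ for $e$ in a bounded range. Indeed, if $f$ defines $D$, any factor $g\mid f$ with $\deg g\ge d$ defines a subscheme $V(g)\subseteq V(f)\subseteq L\cap X$. Taking an irreducible factorization, we can find $g$ of degree between $d$ and $d+\deg(\text{largest irreducible factor})$. This is awkward, so instead we bound degrees directly. If $L\cap X\supseteq V(f)$ scheme-theoretically, then $f$ divides every $F|_L$ with $F\in I_X$; choosing $F\in I_X$ with $F|_L\neq 0$, we get $\deg f\le \deg F$. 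If no such $F$ exists, then $L\subseteq X$, and $L$ contains hypersurfaces of every degree. Hence
\[
\Sigma_{k,d}(X)=\bigcup_{e=d}^{N}\Sigma^{=e}_{k}(X),
\]
where $N$ is the maximal degree of a fixed finite generating set $G_1,\dots,G_t$ of $I_X$, and $\Sigma^{=e}_k(X)$ is the set of $L$ such that $L\cap X$ contains a hypersurface of degree exactly $e$. Here we use that $L\subseteq X$ already lies in each $\Sigma^{=e}_k$. It therefore suffices to show each $\Sigma^{=e}_k(X)$ is closed.

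For fixed $e$, consider the incidence
\[
W_e=\{(L,[f])\mid L\in\g(k,r),\ [f]\in\p(H^0(\mathcal{O}_L(e))),\ f\mid G_i|_L \text{ for all } i\},
\]
inside the projective bundle $\p(\Sym^e\mathcal{Q}^\vee)$ over $\g(k,r)$, where $\mathcal{Q}$ is the rank $k+1$ quotient bundle with fibers $H^0(\mathcal{O}_L(1))^\vee$. Note that $V(f)\subseteq L\cap X$ holds iff $G_i|_L\in (f)$ for each $i$. This is in turn equivalent to the following condition for each $i$: the multiplication map
\[
H^0(\mathcal{O}_L(\deg G_i-e))\to H^0(\mathcal{O}_L(\deg G_i))/\c G_i|_L,\qquad h\mapsto hf,
\]
fails to be injective on the span of its source and $G_i|_L$. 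More cleanly, the vector $G_i|_L$ lies in the image of multiplication by $f$. Multiplication by $f\neq 0$ is injective, so its image has constant rank $\binom{\deg G_i-e+k}{k}$. Therefore the condition is that the matrix formed by this image together with the column $G_i|_L$ has rank at most $\binom{\deg G_i-e+k}{k}$. This is a vanishing of minors whose entries are regular (bihomogeneous) functions in local trivializations over $\g(k,r)$ and the coordinates of $f$, so $W_e$ is closed. The projection $W_e\to\g(k,r)$ is proper, since the fibers are projective spaces, and its image is $\Sigma^{=e}_k(X)$. Hence each $\Sigma^{=e}_k(X)$ is closed, and so is the finite union $\Sigma_{k,d}(X)$.

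The main point needing care is the degree bound and the treatment of $L\subseteq X$; the rest is a standard properness argument. The determinantal description also requires care when $\deg G_i<e$. In that case the condition simply reads $G_i|_L=0$, which is closed as well.
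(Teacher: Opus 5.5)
Your proposal is correct and is the paper's proof: you bound the degree of a contained hypersurface by the top generator degree, so that $\Sigma_{k,d}(X)$ becomes a finite union of fixed-degree loci; each of these is closed as the image, under the proper projection to $\g(k,r)$, of a closed subset of the projective bundle of degree-$e$ hypersurfaces in $k$-planes; and $S_{k,d}(X)$ is the image of $\Sigma_{k,d}(X)$ through the incidence correspondence. Your determinantal description spells out the ``closed condition'' that the paper only asserts. One small fix: the union should run over $d\le e\le\max(d,N)$, because for $d>N$ your displayed formula gives the empty set, whereas $\Sigma_{k,d}(X)=\Sigma_{k,\infty}(X)=\Sigma^{=d}_k(X)$.
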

	\begin{proof}
		Note that $\Sigma_{k,d}(X)=\Sigma_{k,\infty}(X)$ when $d>\maxdeg X$ where $\maxdeg(X)$ denotes the maximal degree in a minimal generating set of $I_X$.
		Hence $\Sigma_{k,d}(X)$ is a finite union of the subsets of the form
		\[
			\Gamma_\delta=\left\{L\in\g(k,r)\,\middle\vert\,
				\begin{aligned}
					L\cap X\text{ contains a hypersurface of degree $\delta$ in $L$}
				\end{aligned}
				\right\}.
		\]
		Let $\mathcal{S}\subseteq \g(k,r)\times \c^{r+1}$ be the universal subbundle over $\g(k,r)$.
		Then the projective bundle $$\p(\Sym^\delta(\mathcal{S}^{\vee}))\to\g(k,r)$$ is identified with the variety of hypersurfaces of degree $\delta$ in $k$-planes in $\p^r$ (see \cite[Example 14.7.12]{MR1644323}).
		As the relation $H\subseteq X$ gives a closed condition for $H\in \p(\Sym^\delta(\mathcal{S}^{\vee}))$, it holds that $\Gamma_\delta$ is closed.
		Hence $\Sigma_{k,d}(X)$ is closed.
		Let $$\Gamma_{\g(k,r)}=\{(L,p)\in\g(k,r)\times \p^r\mid p\in L\}\subseteq \g(k,r)\times \p^r$$ be the incidence correspondence and denote the projections as follows.
		\[
		\begin{tikzcd}
			\Gamma_{\g(k,r)} \ar[r,"\pi_2"]\ar[d,"\pi_1"] & \p^r \\
			\g(k,r)
		\end{tikzcd}
		\]
		As $S_{k,d}(X)=\pi_2(\pi_1^{-1}(\Sigma_{k,d}(X)))$, the result follows.
	\end{proof}
	Note that since $\dim(L\cap X_{\textnormal{sing}})\leq k-2$ is an open condition on $\g(k,r)$, the subset $\Sigma^*_{k,d}(X)\subseteq \g(k,r)$ is locally closed and $S^*_{k,d}(X)\subseteq\p^r$ is constructible.
	
	Next, we give set-theoretic equations of $S_{k,d}(X)\cup X$ for projective schemes $X\subseteq\p^r$ satisfying $\N_{d,2}$ (or $\K_d$). In particular, it enables us to determine the locus of $S_{1,d}(X)\cup X$ and compute its dimension with computer algebra systems.
	
	\begin{proposition}\label{prop_Nd2_implies_Kd}
		Let $X\subseteq\p^r$ be a projective scheme that is scheme-theoretically defined by homogeneous polynomials satisfying $\N_{d,2}$. Then $X$ satisfies $\K_d$.
	\end{proposition}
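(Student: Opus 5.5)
The plan is to pass first to a generating set consisting only of $d$-forms, and then to check that the trivial syzygies of these $d$-forms lie in the submodule generated by their linear syzygies, using the vanishing $\beta_{2,j}=0$ for $j\ge d$.

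Let $I$ be the ideal generated by the given homogeneous polynomials satisfying $\N_{d,2}$, so that $X$ is cut out scheme-theoretically by $I$. Condition $\N_{d,1}$ says that every minimal generator of $I$ has degree $\le d$. As explained in the introduction, I replace each generator $G$ of degree $e<d$ by all the products $mG$ with $m$ a monomial of degree $d-e$. This gives $d$-forms $F_0,\dots,F_s$ spanning $I_d$ and generating $I_{\ge d}$. Since $I_{\ge d}$ and $I$ agree in large degrees, $F_0,\dots,F_s$ still define $X$ scheme-theoretically.

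The main step is to show that the $F_i$ satisfy $\N_{d,2}$, i.e. that the syzygy module of $(F_0,\dots,F_s)$ is generated in degree $d+1$ (linear syzygies). I would use the standard fact that truncation $I_{\ge d}$ of an ideal with $\beta_{i,j}(S/I)=0$ for $i\le 2$, $j\ge d$ (equivalently, regularity-type vanishing of the first two Betti rows beyond $d-1$) has a linear resolution in the first steps. Concretely, $\tor_1(I_{\ge d},\c)_{j}=0$ for $j\ge d+2$ follows from the long exact sequence of $0\to I_{\ge d}\to I\to I/I_{\ge d}\to 0$. Here $I/I_{\ge d}$ is a finite-length module concentrated in degrees $<d$, and the Koszul complex computes $\tor_i(I/I_{\ge d},\c)$ in degrees $<d+i$. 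Hence $\tor_1(I_{\ge d})_{d+1+t}\cong\tor_1(I)_{d+1+t}$ for $t\ge 1$, and the latter vanishes by $\N_{d,2}$ (since $\tor_1(I)=\tor_2(S/I)$). I expect this degree bookkeeping to be the only delicate point. One has to check that the connecting maps involve $\tor_2(I/I_{\ge d})$, which lives in degrees $<d+2$ and so does not interfere.

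Finally, each trivial syzygy $(0,\dots,F_j,\dots,-F_i,\dots,0)$ is a syzygy of degree $2d$. Because the syzygy module of $(F_0,\dots,F_s)$ is generated by linear syzygies, every trivial syzygy is an $S$-combination of linear syzygies. Hence $(X,F_0,\dots,F_s)$ satisfies $\K_d$ by \Cref{def_Kd}.
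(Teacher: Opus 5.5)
Your proof is correct, but it reaches the key step by a different route than the paper.

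The paper argues directly with syzygy vectors. It forms the set $\Gamma$ of all products $\mathcal{M}F_i$ and notes that syzygies among multiples of a single $F_i$ are generated by linear ones. It then uses $\N_{d,2}$ to say that the syzygies of the original generators in degree $\ge d+1$ are generated by those $(G_0,\dots,G_s)$ with $\deg G_i=d+1-d_i$. Each of these lifts to a linear syzygy of $\Gamma$ by writing $G_i$ as an $S_1$-combination of monomials of degree $d-d_i$. Scalar syzygies are then removed by passing to a minimal generating subset of $\Gamma$.

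You instead show $\tor_1(I_{\ge d},\mathbb{C})_j\cong\tor_1(I,\mathbb{C})_j=0$ for $j\ge d+2$. You get this from the long exact sequence of $0\to I_{\ge d}\to I\to I/I_{\ge d}\to 0$ together with the Koszul bound $\tor_i(I/I_{\ge d},\mathbb{C})_j=0$ for $j\ge d+i$. Your degree bookkeeping is right.

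Your version avoids checking by hand that every syzygy of $\Gamma$ splits into the two types the paper considers. The same argument also shows that truncation preserves $\N_{d,p}$ for every $p$. The paper's version is more explicit about where the linear syzygies come from, as in \Cref{rem_Nd2_implies_Kd}.

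One detail to tidy: the products $mG$ can be linearly dependent. For example, $G_1G_2$ expands in two ways when $\deg G_1+\deg G_2\le d$. So before claiming ``the syzygy module is generated in degree $d+1$'', you should take $F_0,\dots,F_s$ to be a basis of $I_d$; this is exactly the paper's minimal-generating-subset step. Alternatively, observe that scalar syzygies multiplied by $x_0,\dots,x_r$ are linear syzygies. Since the trivial syzygies live in degree $2d>d$, they are still generated by linear ones.
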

	\begin{proof}
		Denote the polynomial ring of $\p^r$ as $S=\c[x_0,\dots,x_r]$.
		By the assumption, there are forms $F_0,\dots,F_s$ of degree at most $d$ that define $X$ scheme-theoretically.
		Denote $d_i:=\deg F_i$.
		We denote the set of $d$-forms
		\[
			\Gamma := \{\mathcal{M}F_i\mid 0\le i\le s,\ \mathcal{M}\text{ is a monomial of degree $d-d_i$}\}.
		\]
		Then $d$-forms in $\Gamma$ scheme-theoretically define $X$.
		Furthermore, these still satisfy $\N_{d,2}$.
		Indeed, for fixed $i$, the syzygies between the polynomials of the form $\mathcal{M}F_i$, where $\mathcal{M}$ runs over monomials of degree $d-d_i$, are generated by linear syzygies.
		Hence it is enough to consider the syzygies of the polynomials with different $F_i$'s.
		By the condition $\N_{d,2}$, syzygies of $F_0,\dots,F_s$ of degree $\ge d+1$ are generated by the syzygies of the form $(G_0,\dots,G_s)$ where $G_i$ is a $(d+1-d_i)$-form for each $i$.
		Then we represent $G_i$ as a sum of monomials of degree $d+1-d_i$.
		This allows us to write $G_i$ as an $S_1$-linear combination of monomials of degree $d-d_i$.
		Hence, syzygies of the $d$-forms in $\Gamma$ are generated by linear syzygies except the syzygies of degree $d$, i.e., those of the form $(c_1,\cdots,c_{|\Gamma|})$ where $c_i\in\c$ for each $i$.
		The latter part can be ignored by taking a minimal generating subset of $\Gamma$ to remove the redundant elements that are linearly dependent.
		Therefore, the trivial syzygies of such $d$-forms are generated by linear syzygies, i.e., $X$ satisfies $\K_d$.
	\end{proof}

	The following example shows the principle of the proof of \Cref{prop_Nd2_implies_Kd} explicitly.

	\begin{example}\label{rem_Nd2_implies_Kd}
		The rational quartic curve $X\subseteq\p^3$ has the following Betti table, hence it satisfies the property $\N_{3,2}$.
		\[\begin{array}{c|cccc}
			&0 &1 &2 &3 \\ \hline
			0&1 & & & \\
			1& &1 & & \\
			2& &3 &4 &1
		\end{array}\]
		Let $Q$ be the quadratic form and $C_1,C_2,C_3$ be the cubic forms that generate the saturated ideal of $X$. Then
		\begin{equation}\label{quadric_cubic}
			(x_0Q,x_1Q,x_2Q,x_3Q,C_1,C_2,C_3)
		\end{equation}
		scheme-theoretically defines $X$ where $x_0,\dots,x_3$ are the homogeneous coordinates of $\p^3$. Suppose $(Q',L_1,L_2,L_3)$ is a syzygy of $(Q,C_1,C_2,C_3)$, i.e. \[Q'Q+L_1C_1+L_2C_2+L_3C_3=0\]
		where $Q'$ is a quadratic form and $L_i$'s are linear forms.
		By representing \[Q'=x_0L_0'+x_1L_1'+x_2L_2'+x_3L_3'\]
		where $L_0',\dots,L_3'$ are linear forms, we can see the syzygies of (\ref{quadric_cubic}) are generated by linear syzygies. Hence (\ref{quadric_cubic}) satisfies $\K_3$.
	\end{example}
	Hence all results in this section for projective schemes satisfying $\K_d$ also apply to those satisfying $\N_{d,2}$. Now we give set-theoretic equations of $S_{k,d}(X)\cup X$.

	\begin{thmalphabetmaintext}[Set-theoretic equations of $S_{k,d}(X)\cup X$]\label{thm_equations_of_skd}
		Let $X\subseteq\p^r=\textup{Proj}\ \c[x_0,\dots,x_r]$ be a (possibly singular) projective scheme that is scheme-theoretically defined by $d$-forms $F_0,\dots,F_s$ satisfying $\N_{d,2}$ (or more generally $\K_d$). Let \[\{(L_{i0},\dots,L_{is})\}_{1\leq i\leq m}\] be a generating set of the linear syzygies of the $F_i$'s, and write
		\[
		L_{ij} =a^0_{ij}x_0+\cdots+a^r_{ij}x_r
		\]
		where $a^k_{ij}\in\c$.
		Let $M$ be the $m\times (r+1)$ matrix of $d$-forms as follows.
		\[M=\begin{bmatrix}
			\sum\limits^s_{j=0} a^0_{1j}F_j &\sum\limits^s_{j=0} a^1_{1j}F_j& \cdots & \sum\limits^s_{j=0} a^r_{1j}F_j\\
			\sum\limits^s_{j=0} a^0_{2j}F_j &\sum\limits^s_{j=0} a^1_{2j}F_j& \cdots & \sum\limits^s_{j=0} a^r_{2j}F_j\\
			\vdots & \vdots & \ddots & \vdots \\
			\sum\limits^s_{j=0} a^0_{mj}F_j &\sum\limits^s_{j=0} a^1_{mj}F_j& \cdots & \sum\limits^s_{j=0} a^r_{mj}F_j\\
		\end{bmatrix}\]
		Then $I_{r+1-k}(M)$ defines $S_{k,d}(X)\cup X$ set-theoretically for every $k\geq 1$.
		In particular, $I_r(M)$ defines $S_{1,d}(X)\cup X$ set-theoretically.
		Hence $S_{1,d}(X)=\p^r$ if and only if every $r$-minor of $M$ is zero.
	\end{thmalphabetmaintext}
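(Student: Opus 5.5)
The plan is to read $M$ pointwise. For a point $p\in\p^r\setminus X$, let $A(p)$ be the $m\times(r+1)$ scalar matrix of the linear map $\c^{r+1}\to\c^m$, $v\mapsto (\sum_j L_{ij}(v)F_j(p))_{1\le i\le m}$. Writing $F(p)=(F_0(p),\dots,F_s(p))$, the $(i,k)$ entry of $M(p)$ is $\sum_j a^k_{ij}F_j(p)$, which is exactly the coefficient of $x_k$ in $\sum_j L_{ij}y_j$ after substituting $y=F(p)$. Thus $M(p)$ is the coefficient matrix of the linear forms $\{\sum_j L_{ij}(x)F_j(p)\}_i$ in $x$.

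By the proof of \Cref{prop_fiber_of_lifted_kd_map}, the common zero locus of these linear forms is the linear space $\p^{k'}$ with $T_a=\p^{k'}\times\{a\}$, where $a=\varphi(p)$. Hence $\dim\ker M(p)=k'+1$, and
\[
\rank M(p)\le r-k \iff k'\ge k.
\]
Points of $X$ satisfy all minors trivially, since every entry lies in $(F_0,\dots,F_s)$. So it suffices to show that for $p\notin X$ we have $k'\ge k$ if and only if $p\in S_{k,d}(X)$.

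\textbf{Forward direction.} Suppose $k'\ge k$. First, $p\in T_a$: every syzygy satisfies $\sum_j L_{ij}(p)F_j(p)=0$. Next, $T_a$ agrees with $\pi_2^{-1}(a)$ off $E$, and $p\notin X$, so $T_a\not\subseteq E$. The argument of \Cref{prop_fiber_of_lifted_kd_map} then shows $\pi_2^{-1}(a)=T_a$, since $\Gamma_\varphi$ is closed and contains the dense open part of the irreducible $T_a$. Therefore $\varphi$ is constant on $\p^{k'}\setminus X$, and as in \Cref{rem_k-plane_maps_to_a_point} the $F_j$ restricted to $\p^{k'}$ are all proportional to a single $d$-form $f\not\equiv0$. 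So $\p^{k'}\cap X$ is the hypersurface $\{f=0\}$ of degree $d$.

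Now take any $k$-plane $L$ with $p\in L\subseteq\p^{k'}$. Then $L\not\subseteq X$ and $L\cap X=\{f|_L=0\}$ is a hypersurface of degree $d$ in $L$, so $p\in S_{k,d}(X)$. Note that this step does not need the "no line in $X$" hypothesis, because we only use the fiber over the image of a point outside $X$.

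\textbf{Converse.} Let $p\in L\setminus X$ with $L\in\Sigma_{k,d}(X)$, so that $L\cap X\supseteq H$, a hypersurface of degree $e\ge d$. Since $p\notin X$, we have $L\not\subseteq X$, so the $F_j|_L$ are not all zero. Each $F_j|_L$ vanishes on $H$, hence is divisible by its degree-$e$ equation $h$. If $e>d$ they would all be zero, so $e=d$ and $F_j|_L=c_jh$.

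Thus $\varphi$ maps $L\setminus X$ to the single point $a=[c]$. For $x\in L$ and each syzygy $i$ we get
\[
\sum_j L_{ij}(x)\,c_j\,h(x)=\sum_j L_{ij}(x)F_j(x)=0.
\]
Since $h\not\equiv0$ on $L$, the linear form $\sum_j L_{ij}c_j$ vanishes identically on $L$. Hence $L\subseteq\p^{k'}$, giving $k'\ge k$ and $\rank M(p)\le r-k$.

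\textbf{Where care is needed.} The $\K_d$ hypothesis is used only through \Cref{prop_fiber_of_lifted_kd_map}: it ensures $T_a$ equals the actual fiber, so the forward direction produces a genuine hypersurface. The main subtle point is verifying $\pi_2^{-1}(a)=T_a$ without the "no lines" hypothesis. I would handle it as above: $T_a$ is an irreducible linear space meeting the complement of $E$ (it contains $p$), and $\Gamma_\varphi$ is closed and coincides with $T$ off $E$.

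The final claims are the case $k=1$, which gives $I_r(M)$. Also, $S_{1,d}(X)\cup X=\p^r$ is equivalent to $S_{1,d}(X)=\p^r$, since $X\ne\p^r$ or else the statement is trivial.
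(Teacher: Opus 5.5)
Your proposal is correct and follows essentially the same route as the paper: you read $M(p)$ as the coefficient matrix of the linear forms cutting out the fiber $\pi_1(\pi_2^{-1}(\varphi(p)))$ via Vermeire's description (\Cref{prop_fiber_of_lifted_kd_map}). You exclude the degenerate case $\pi_2^{-1}(a)\subseteq E$ because $p\notin X$ (as in \Cref{rem_fiber_of_lifted_kd_map_without_assumption}), and you match fiber dimension $\ge k$ with membership in $S_{k,d}(X)$ via \Cref{rem_k-plane_maps_to_a_point} and the fact that $X$ is cut out by $d$-forms. Your write-up simply spells out steps the paper leaves terse: the closure argument giving $\pi_2^{-1}(a)=T_a$, choosing a $k$-plane through $p$ inside the fiber, and the divisibility argument forcing $e=d$ in the converse.
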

	\begin{proof}
		We use the construction in Section \ref{sec_vermeire} so that $\varphi:\p^r\backslash X\to \p^s$ is the map corresponding to $[F_0,\dots,F_s]$, and $\Gamma_\varphi\subseteq\p^r\times\p^s$ is the closure of the graph of $\varphi$ which is isomorphic to $\bl_X\p^r$. Also, $E$ is the exceptional divisor of the blowing up.
		\[
		\begin{tikzcd}
			Bl_X\p^r=\Gamma_\varphi \ar[r,"\pi_2"]\ar[d,"\pi_1"] &\p^s \\
			\p^r \ar[ur,"\varphi",dotted,swap]
		\end{tikzcd}
		\]
		Then for $a=[a_0,\dots,a_s]\in\pi_2(\Gamma_\varphi)$, $\pi_1(\pi_2^{-1}(a))$ is either defined by the linear forms \[\{L_{10}a_0+\cdots +L_{1s}a_s,\dots,L_{m0}a_0+\cdots +L_{ms}a_s\}\] or contained in $X$ by the proof of \Cref{prop_fiber_of_lifted_kd_map} and \Cref{rem_fiber_of_lifted_kd_map_without_assumption}. The second case occurs if and only if $\pi_2^{-1}(a)$ is contained in $E$.
		For $p\in\p^r\backslash X$, $\pi_2^{-1}(\varphi(p))$ is not contained in $E$, hence the second case does not occur.
		Thus for $p\in\p^r\backslash X$, $\pi_1(\pi_2^{-1}(\varphi(p)))$ is defined by the linear forms 
		\begin{gather*}
			\left(\sum\limits^s_{j=0} a^0_{1j}F_j(p)\right)x_0+\left(\sum\limits^s_{j=0} a^1_{1j}F_j(p)\right)x_1+\cdots+\left(\sum\limits^s_{j=0} a^r_{1j}F_j(p)\right)x_r,\\
			\left(\sum\limits^s_{j=0} a^0_{2j}F_j(p)\right)x_0+\left(\sum\limits^s_{j=0} a^1_{2j}F_j(p)\right)x_1+\cdots+\left(\sum\limits^s_{j=0} a^r_{2j}F_j(p)\right)x_r,\\
			\vdots\\
			\left(\sum\limits^s_{j=0} a^0_{mj}F_j(p)\right)x_0+\left(\sum\limits^s_{j=0} a^1_{mj}F_j(p)\right)x_1+\cdots+\left(\sum\limits^s_{j=0} a^r_{mj}F_j(p)\right)x_r.
		\end{gather*}
		Thus $\dim\pi_1(\pi_2^{-1}(\varphi(p)))\geq k$ if and only if the rank of the matrix
		\[M(p)=\begin{bmatrix}
			\sum\limits^s_{j=0} a^0_{1j}F_j(p) &\sum\limits^s_{j=0} a^1_{1j}F_j(p)& \cdots & \sum\limits^s_{j=0} a^r_{1j}F_j(p)\\
			\sum\limits^s_{j=0} a^0_{2j}F_j(p) &\sum\limits^s_{j=0} a^1_{2j}F_j(p)& \cdots & \sum\limits^s_{j=0} a^r_{2j}F_j(p)\\
			\vdots & \vdots & \ddots & \vdots \\
			\sum\limits^s_{j=0} a^0_{mj}F_j(p) &\sum\limits^s_{j=0} a^1_{mj}F_j(p)& \cdots & \sum\limits^s_{j=0} a^r_{mj}F_j(p)\\
		\end{bmatrix}\]
		is less than or equal to $r-k$.
		Therefore \[S_{k,d}(X)\backslash X=\{p\in\p^r\backslash X\mid \rank M(p)\leq r-k\}\]
		by \Cref{rem_k-plane_maps_to_a_point} and \Cref{rem_fiber_of_lifted_kd_map_without_assumption} since $X$ is scheme-theoretically defined by $d$-forms.
		Also, $\rank M(p)=0$ for $p\in X$ and $\rank M(p)\geq r+1-k$ for $p\notin S_{k,d}(X)\cup X$. Hence \[S_{k,d}(X)\cup X=\{p\in\p^r\mid\rank M(p)\leq r-k\}=V(I_{r+1-k}(M)).\qedhere\]
	\end{proof}
	
	\begin{example}\label{example:RNC}
		Let $X\subseteq\p^4=\proj\c[x_0,\dots,x_4]$ be the rational normal curve defined by $2$-minors of the following matrix
		\[\begin{bmatrix}
			x_0 & x_1 & x_2 & x_3\\
			x_1 & x_2 & x_3 & x_4
		\end{bmatrix}_{\textstyle\raisebox{4pt}.}\]
		The homogeneous ideal of $X$ is $I_X=(F_0,\dots,F_5)$ where
		\begin{alignat*}{6}
			F_0 &= -x_1^2+x_0x_2,\quad
			&F_1 &= -x_1x_2+x_0x_3,\quad
			&F_2 &= -x_2^2+x_1x_3,\\
			F_3 &= -x_1x_3+x_0x_4,\quad
			&F_4 &= -x_2x_3+x_1x_4,\quad
			&F_5 &= -x_3^2+x_2x_4,
		\end{alignat*}
		and it has the following Betti table.
		\[\begin{array}{c|cccc}
			&0 &1 &2 &3 \\ \hline
			0&1 & & & \\
			1& &6 &8 &3 \\
		\end{array}\]
		The linear syzygies of $F_0,\dots,F_5$ are
		\begin{gather*}
			(x_3,-x_2,x_1,0,0,0),\\
			(x_2,-x_1,x_0,0,0,0),\\
			(0,x_4,-x_3,-x_3,x_2,0),\\
			(x_4,0,0,-x_2,x_1,0),\\
			(x_3,0,0,-x_1,x_0,0),\\
			(0,0,x_4,0,-x_3,x_2),\\
			(0,x_4,0,-x_3,0,x_1),\\
			(0,x_3,0,-x_2,0,x_0).
		\end{gather*}
		Hence we get the matrix
		{\footnotesize
		\[
			M= \begin{bmatrix}
				0   & F_2  & -F_1 & F_0      & 0   \\
				F_2 & -F_1 & F_0  & 0        & 0   \\
				0   & 0    & F_4  & -F_2-F_3 & F_1 \\
				0   & F_4  & -F_3 & 0        & F_0 \\
				F_4 & -F_3 & 0    & F_0      & 0   \\
				0   & 0    & F_5  & -F_4     & F_2 \\
				0   & F_5  & 0    & -F_3     & F_1 \\
				F_5 & 0    & -F_3 & F_1      & 0
			\end{bmatrix}
		\]}		
		{\scriptsize
		\[
			=\begin{bmatrix}
				0              & -x_2^2+x_1x_3  & x_1x_2-x_0x_3  & -x_1^2+x_0x_2  & 0              \\
				-x_2^2+x_1x_3  & x_1x_2-x_0x_3  & -x_1^2+x_0x_2  & 0              & 0              \\
				0              & 0              & -x_2x_3+x_1x_4 & x_2^2-x_0x_4   & -x_1x_2+x_0x_3 \\
				0              & -x_2x_3+x_1x_4 & x_1x_3-x_0x_4  & 0              & -x_1^2+x_0x_2  \\
				-x_2x_3+x_1x_4 & x_1x_3-x_0x_4  & 0              & -x_1^2+x_0x_2  & 0              \\
				0              & 0              & -x_3^2+x_2x_4  & x_2x_3-x_1x_4  & -x_2^2+x_1x_3  \\
				0              & -x_3^2+x_2x_4  & 0              & x_1x_3-x_0x_4  & -x_1x_2+x_0x_3 \\
				-x_3^2+x_2x_4  & 0              & x_1x_3-x_0x_4  & -x_1x_2+x_0x_3 & 0
			\end{bmatrix}
		\]}
		\noindent
		as in \Cref{thm_equations_of_skd}.
		Then $I_4(M)$ set-theoretically defines $$S_{1,2}(X)\cup X=\sigma_2X.$$
		Using Macaulay2, one can see \[V(I_4(M))=V(x_2^3-2x_1x_2x_3+x_0x_3^2+x_1^2x_4-x_0x_2x_4)\] which is indeed the locus of $\sigma_2X$.
	\end{example}
	
	\begin{example}\label{example:RQC}
		Let $X\subseteq\p^3=\proj\c[x_0,\dots,x_3]$ be the rational quartic curve that is the image of
		\begin{align*}
			\proj\c[s,t] &\to \proj\c[x_0,\dots,x_3]\\
			[s,t] &\mapsto [s^4,s^3t,st^3,t^4].
		\end{align*}
		The homogeneous ideal of $X$ is $I_X=(Q,C_1,C_2,C_3)$ where
		\begin{align*}
			Q &= x_1x_2-x_0x_3,\\
			C_1 &= x_2^3-x_1x_3^2,\\
			C_2 &= x_0x_2^2-x_1^2x_3,\\
			C_3 &= x_1^3-x_0^2x_2,
		\end{align*}
		and it has the following Betti table.
		\begin{gather*}
			\begin{array}{c|cccc}
				&0 &1 &2 &3 \\ \hline
				0&1 & & & \\
				1& &1 & & \\
				2& &3 &4 &1
			\end{array}\\
			\Small{\text{The Betti table of the saturated ideal $I_X$}}
		\end{gather*}
		Although it satisfies $\N_{3,2}$, the homogeneous ideal $I_X$ is not generated by only cubic forms.
		Hence we replace $Q$ by $(x_0Q,x_1Q,x_2Q,x_3Q)$ as in \Cref{rem_Nd2_implies_Kd}, and we get a desaturated ideal
		\[
			J:=(x_0Q,x_1Q,x_2Q,x_3Q,C_1,C_2,C_3)
		\]
		satisfying \textbf{K$_3$} that still defines $X$ scheme-theoretically with the following Betti table.
		\begin{gather*}
			\begin{array}{c|ccccc}
				&0 &1 &2 &3 &4 \\ \hline
				0&1 & & & & \\
				1& & & & & \\
				2& &7 &10 &5 &1
			\end{array}\\
			\Small{\text{The Betti table of a desaturated ideal $J$}}
		\end{gather*}
		Then we get a $10\times 4$ matrix $M$ of cubic forms whose $3$-minors define $S_{1,3}(X)\cup X$ set-theoretically. Using Macaulay2, one can see \[V(I_3(M))=V(x_1x_2-x_0x_3).\] Hence the locus of trisecant lines of $X$ is a quadric hypersurface.
	\end{example}

	\begin{example}\label{example:6pts}
		Let $X\subseteq\p^2=\proj \c[x,y,z]$ be the set of six points
		\[\{[0,0,1], [1,0,1], [2,0,1], [0,1,1], [1,1,1], [0,2,1]\}\] in the projective plane. In the affine plane $\mathbb{A}^2$ corresponding to $z\neq 0$, $X$ consists of the following six points.
		\begin{center}
			\includegraphics{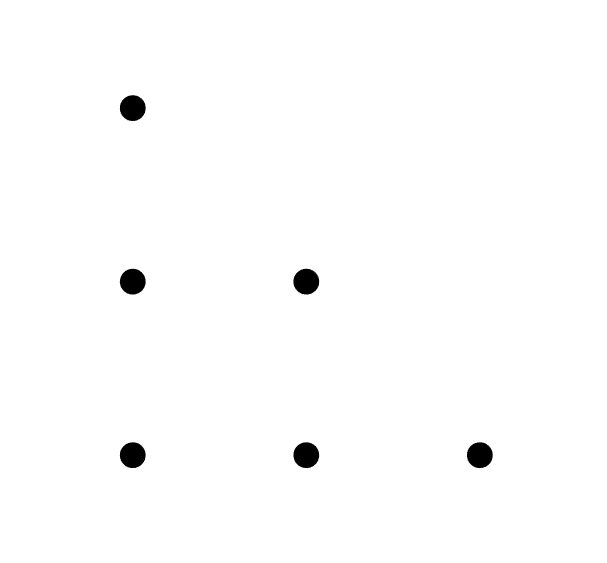}\\
			\Small{\text{Six points $X\subseteq\mathbb{A}^2\subseteq\p^2$}}
		\end{center}
		Using Macaulay2, one can see the homogeneous ideal of $X$ is
		\[I_X=(y^3-3y^2z+2yz^2, xy^2-xyz, x^2y-xyz, x^3-3x^2z+2xz^2),\]
		and the Betti table is given as follows.
		\[\begin{array}{c|ccc}
			&0 &1 &2 \\ \hline
			0&1 & & \\
			1& & & \\
			2& &4 &3 
		\end{array}\]
		Then we get a $3\times 3$ matrix $M$ whose $2$-minors define $S_{1,3}(X)\cup X$ set-theoretically.
		Also, one can see \[V(I_2(M))=V(x^2y+xy^2-2xyz)=V(x)\cup V(y)\cup V(x+y-2z)\] using Macaulay2.
		Indeed, the lines $V(x)$, $V(y)$, $V(x+y-2)$ on $\mathbb{A}^2$ are the trisecant lines of $X$ as follows.
		\vspace{5pt}
		\begin{center}
			\includegraphics{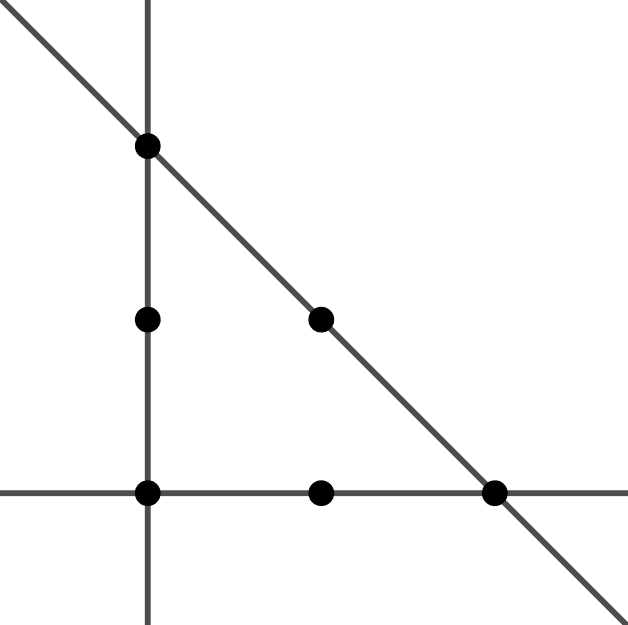}\\
			\Small{\text{Trisecant lines of $X\subseteq\mathbb{A}^2\subseteq\p^2$}}
		\end{center}
	\end{example}
	
	\begin{example}\label{example:curvilinear}
		Let $X\subseteq\p^2=\proj\c[x,y,z]$ be the union of
		\begin{itemize}
			\item the triple point at $[0,0,1]$ defined by $I_1=(x^3,y)$ and
			\item the line defined by $I_2=(x-z)$.
		\end{itemize}
		In the affine plane $\mathbb{A}^2$ corresponding to $z\neq 0$, it is the union of a triple point at the origin curvilinear along the $x$-axis and the line $V(x-1)$.
		\begin{center}
			\includegraphics{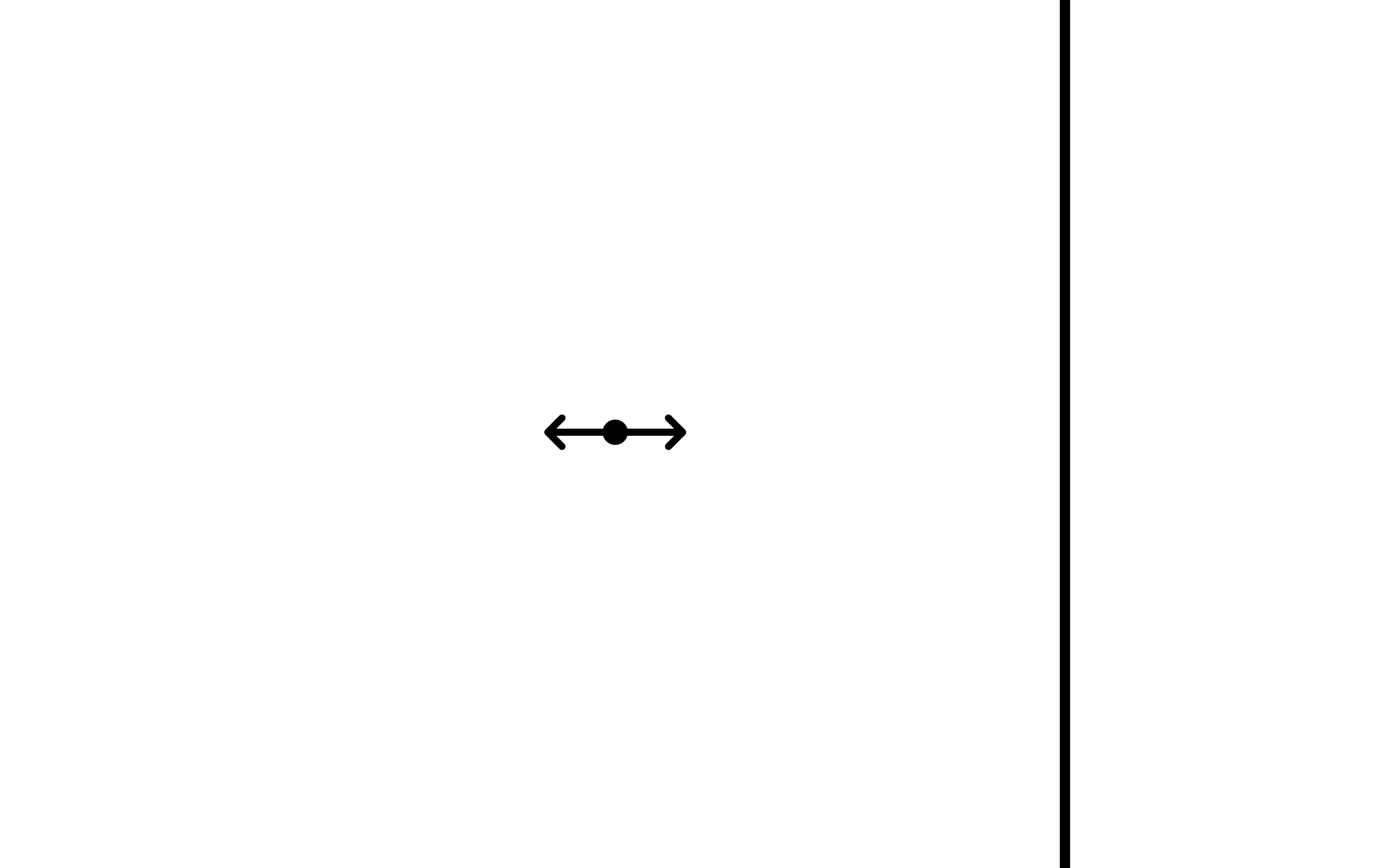}\\
			\Small{\text{Curvilinear triple point at the origin of $\mathbb{A}^2$ and the line $V(x-1)\subseteq\mathbb{A}^2$}}
		\end{center}
		The homogeneous ideal of $X$ is
		\[
		I_X=(xy-yz,x^4-x^3z),
		\]
		and has the following Betti table.
		\begin{gather*}
			\begin{array}{c|ccc}
				&0 &1 &2 \\ \hline
				0&1 & & \\
				1& &1 & \\
				2& & & \\
				3& &1 &1 
			\end{array}\\
			\Small{\text{The Betti table of the saturated ideal $I_X$}}
		\end{gather*}
		By replacing $xy-yz$ by $(xy-yz)\cdot(x,y,z)^2$, we get the ideal
		{\Small
			\[J=(xyz^2-yz^3,xy^2z-y^2z^2,x^2yz-yz^3,xy^3-y^3z,x^2y^2-y^2z^2,x^3y-yz^3,x^4-x^3z)\]}
		\noindent
		that still defines $X$ scheme-theoretically with the following Betti table.
		\begin{gather*}
			\begin{array}{c|cccc}
				&0 &1 &2 &3 \\ \hline
				0&1 & & & \\
				1& & & & \\
				2& & & & \\
				3& &7 &9 &3 
			\end{array}\\
			\Small{\text{The Betti table of a desaturated ideal $J$}}
		\end{gather*}
		Then we get a $9\times 3$ matrix $M$ whose $2$-minors define $S_{1,4}(X)\cup X$ set-theoretically.
		Using Macaulay2, we get
		\[
			V(I_{2}(M))=V(xy-yz)=V(y)\cup V(x-z).
		\]
		Indeed, $V(y)\subseteq\mathbb{A}^2$ is a $4$-secant line of $X$, and $V(x-1)\subseteq\mathbb{A}^2$ is a line contained in $X$.
		\begin{center}
			\includegraphics{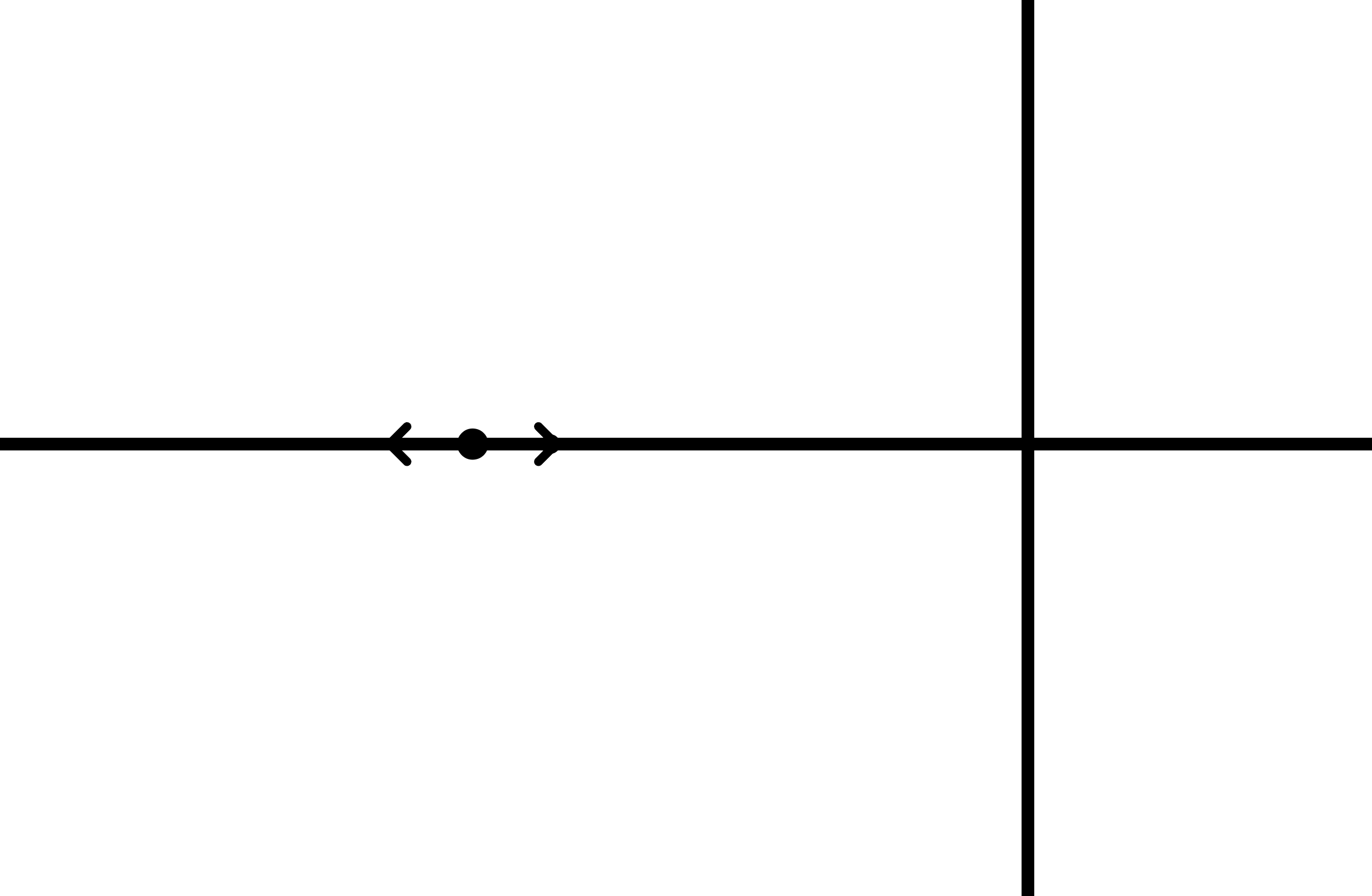}\\
			\Small{\text{$4$-secant lines of $X\subseteq\mathbb{A}^2\subseteq\p^2$}}
		\end{center}
	\end{example}
	
	Using the notion of $S_{k,d}(X)$, we give a filtration.
	
	\begin{proposition}\label{prop_represent_higher_secant_variety}
		Let $X\subseteq\p^r$ be a nondegenerate projective variety and let $q\ge 1$. Then we have
		\begin{equation}\label{eqn_higher_secant_representation}
			\sigma_{q+1}X\subseteq S_{q,q+1}(\sigma_{q}X)\subseteq S_{q-1,q+1}(\sigma_{q}X)\subseteq\cdots\subseteq S_{1,q+1}(\sigma_{q}X).
		\end{equation}
	\end{proposition}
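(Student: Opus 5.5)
The chain $S_{q,q+1}(\sigma_qX)\subseteq S_{q-1,q+1}(\sigma_qX)\subseteq\cdots\subseteq S_{1,q+1}(\sigma_qX)$ is formal. For $2\le k\le q$ and $L\in\Sigma_{k,q+1}(\sigma_qX)$, the intersection $L\cap\sigma_qX$ contains a hypersurface $H\subseteq L$ of degree $\delta\ge q+1$. Take any $p\in L$ and any $(k-1)$-plane $L'$ with $p\in L'\subseteq L$ and $L'\not\subseteq H$. Then $L'\cap H$ is a hypersurface of degree $\delta$ in $L'$, and it is contained scheme-theoretically in $L'\cap\sigma_qX$. If instead $H$ is all of $L$ (the case $L\subseteq\sigma_qX$), every $L'$ lies in $\sigma_qX$. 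Either way $L'\in\Sigma_{k-1,q+1}(\sigma_qX)$, so $L\subseteq S_{k-1,q+1}(\sigma_qX)$.

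The substantive step is $\sigma_{q+1}X\subseteq S_{q,q+1}(\sigma_qX)$. Since $S_{q,q+1}(\sigma_qX)$ is closed by \Cref{prop:Skd_closed}, it suffices to treat a general point of $\sigma_{q+1}X$. Choose general points $p_1,\dots,p_{q+1}\in X$; by nondegeneracy they are linearly independent, so $L=\langle p_1,\dots,p_{q+1}\rangle$ is a $q$-plane. Every point of $L$ outside the coordinate hyperplanes lies on such an $L$, and these are dense in $\sigma_{q+1}X$. It remains to show $L\in\Sigma_{q,q+1}(\sigma_qX)$.

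In coordinates on $L$ with $p_i$ the coordinate points, let $\Delta_i=\langle p_1,\dots,\widehat{p_i},\dots,p_{q+1}\rangle$ be the $i$-th coordinate hyperplane. Each $\Delta_i$ is a $(q-1)$-plane spanned by $q$ points of $X$, hence $\Delta_i\subseteq\sigma_qX$. So $\bigcup_i\Delta_i=V(y_1\cdots y_{q+1})$, a reduced hypersurface of degree $q+1$ in $L$, lies in $L\cap\sigma_qX$. Because $\sigma_qX$ is reduced and $V(y_1\cdots y_{q+1})$ is reduced, the set-theoretic containment gives the scheme-theoretic one: the ideal of $L\cap\sigma_qX$ in $L$ is generated by restrictions of forms vanishing on $\sigma_qX$, each of which vanishes on every $\Delta_i$ and so is divisible by $y_1\cdots y_{q+1}$.

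The main obstacle is exactly this scheme-theoretic containment. One has to make sure the intersection scheme $L\cap\sigma_qX$, which may be non-reduced, still contains the reduced union of hyperplanes. The argument above settles it, since it only needs the equations of $\sigma_qX$ to vanish on each $\Delta_i$. This is also why I would work with general points, so that $L$ is a genuine $q$-plane.
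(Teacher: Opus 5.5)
Your proof is correct and follows the paper's argument. Both show that the $q+1$ coordinate hyperplanes of a $(q+1)$-secant $q$-plane $L$ are $q$-secant $(q-1)$-planes lying in $\sigma_qX$, so their union is a degree $q+1$ hypersurface contained in $L\cap\sigma_qX$, and then conclude by the closedness of $S_{q,q+1}(\sigma_qX)$. You also spell out two steps the paper leaves implicit or calls obvious: the scheme-theoretic containment, via divisibility of every restricted form by $y_1\cdots y_{q+1}$, and the remaining inclusions $S_{k,q+1}(\sigma_qX)\subseteq S_{k-1,q+1}(\sigma_qX)$.
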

	\begin{proof}
		Let $L$ be a $(q+1)$-secant $q$-plane to $X$. Let $p_1,\dots,p_{q+1}$ be the points spanning $L$. Then whenever we pick $q$ points among $p_i$, we get a $q$-secant $(q-1)$-plane to $X$ which is contained in $\sigma_qX$. Hence $L\cap \sigma_qX$ contains a hypersurface of degree $q+1$ of $L$, and we get $\sigma_{q+1}X\subseteq S_{q,q+1}(\sigma_qX)$ since $S_{q,q+1}(\sigma_qX)$ is closed (\Cref{prop:Skd_closed}). Other containments are obvious.
	\end{proof}
	
	It is well known that $(I_{\sigma_qX})_q=0$ \cite[Corollary 2.3]{MR1822836}.
	Hence this filtration gives the following corollary of \Cref{thm_equations_of_skd}.
	
	\setcounter{thmalphabetmaintext}{2}
	\begin{coralphabetmaintext}\label{cor_equations_of_higher_secant}
		Let $X\subseteq\p^r$ be a nondegenerate projective variety such that $\sigma_qX\subsetneq\p^r$ satisfies the property $\N_{q+1,2}$ where $q\ge 1$.
		Define $M$ as in \Cref{intro_set-theoretic_eq} for the minimal generators of $I_{\sigma_qX}$. Then it holds that
		\[
			I_{r-q+1}(M)\subseteq I_{\sigma_{q+1}X}.
		\]
	\end{coralphabetmaintext}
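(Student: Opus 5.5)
The plan is to apply \Cref{thm_equations_of_skd} to the scheme $Y:=\sigma_qX$ with $d=q+1$ and $k=q$, and then use the filtration \eqref{eqn_higher_secant_representation}. The first step is to check the hypotheses. $Y$ satisfies $\N_{q+1,2}$, and $(I_Y)_q=0$ by \cite[Corollary 2.3]{MR1822836}. Since $\N_{q+1,2}$ forces every minimal generator to have degree at most $q+1$, the minimal generators of $I_Y$ are all $(q+1)$-forms $F_0,\dots,F_s$. They satisfy $\N_{q+1,2}$ because $I_Y$ does, and they define $Y$ (even ideal-theoretically). Thus no desaturation step is needed, and \Cref{thm_equations_of_skd} applies directly to $F_0,\dots,F_s$ with $d=q+1$. (If one prefers, \Cref{prop_Nd2_implies_Kd} gives $\K_{q+1}$.)

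Next, \Cref{thm_equations_of_skd} with $k=q$ gives
\[
V(I_{r+1-q}(M))=S_{q,q+1}(Y)\cup Y.
\]
By \Cref{prop_represent_higher_secant_variety}, $\sigma_{q+1}X\subseteq S_{q,q+1}(\sigma_qX)$. Hence
\[
\sigma_{q+1}X\subseteq V(I_{r-q+1}(M)).
\]
Since $\sigma_{q+1}X$ is a variety (reduced), its homogeneous ideal is radical. By the Nullstellensatz, every $(r-q+1)$-minor of $M$ vanishes on $\sigma_{q+1}X$ and hence lies in $I_{\sigma_{q+1}X}$, which gives $I_{r-q+1}(M)\subseteq I_{\sigma_{q+1}X}$.

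Everything here is bookkeeping, and the only point needing care is the reduction to $d$-forms. We must make sure that the $M$ built from the \emph{minimal} generators is the same kind of matrix as in \Cref{thm_equations_of_skd}, i.e.\ that no lower-degree generators exist that would require the $x_iF$ replacement. This is exactly what $(I_{\sigma_qX})_q=0$ together with the degree bound from $\N_{q+1,2}$ guarantees. We also use $\sigma_qX\subsetneq\p^r$, so that $I_Y\neq0$ and the construction is meaningful.
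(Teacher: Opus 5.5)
Your proposal is correct and is essentially the paper's argument. The paper likewise invokes $(I_{\sigma_qX})_q=0$ so that the minimal generators are all $(q+1)$-forms, applies \Cref{thm_equations_of_skd} with $k=q$ and $d=q+1$, and combines this with the inclusion $\sigma_{q+1}X\subseteq S_{q,q+1}(\sigma_qX)$ from \Cref{prop_represent_higher_secant_variety}; your degree bookkeeping from $\N_{q+1,2}$ and the final Nullstellensatz step only make explicit what the paper leaves implicit.
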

	
	\section{The dimension bound}\label{sec_dimension_bound}
	
	In this section, we give an upper bound of $\dim S^*_{k,d}(X)$ which can be regarded as an extension of the $d$-secant lemma to $k$-planes.
	
	Recall that a constructible subset $A$ of a Zariski topological space contains a dense open subset of $\overline{A}$, and $\dim A=\dim \overline{A}$.
	
	\begin{lemma}\label{lem_closure_in_grassmannian}
		Let $\Sigma_0$ be a constructible subset of $\g(k,r)$, and let $\Sigma:=\overline{\Sigma_0}\subseteq\g(k,r)$. Then $\bigcup\limits_{L\in\Sigma_0}L$ is a constructible subset of $\p^r$, and $\bigcup\limits_{L\in\Sigma}L=\overline{\bigcup\limits_{L\in\Sigma_0}L}$. In particular, $\dim\bigcup\limits_{L\in\Sigma}L=\dim\bigcup\limits_{L\in\Sigma_0}L$.
	\end{lemma}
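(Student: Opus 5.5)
The plan is to work with the incidence correspondence $\Gamma_{\g(k,r)}\subseteq\g(k,r)\times\p^r$ and its projections $\pi_1,\pi_2$, as in the proof of \Cref{prop:Skd_closed}. Writing $U_0:=\bigcup_{L\in\Sigma_0}L$ and $U:=\bigcup_{L\in\Sigma}L$, we have
\[
U_0=\pi_2(\pi_1^{-1}(\Sigma_0))\quad\text{and}\quad U=\pi_2(\pi_1^{-1}(\Sigma)).
\]
Since $\pi_1^{-1}(\Sigma_0)$ is constructible in $\Gamma_{\g(k,r)}$ (preimages of constructible sets are constructible), Chevalley's theorem shows that $U_0$ is constructible in $\p^r$. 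This settles the first claim.

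For the equality of closures, I will prove both inclusions.

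First, $\pi_1^{-1}(\Sigma)$ is closed. Because $\Gamma_{\g(k,r)}$ is projective, $\pi_2$ is a closed map, so $U$ is closed in $\p^r$. As $U\supseteq U_0$, we get $U\supseteq\overline{U_0}$.

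Conversely, I need $\pi_1^{-1}(\Sigma)\subseteq\overline{\pi_1^{-1}(\Sigma_0)}$. The map $\pi_1$ is a $\p^k$-bundle, hence flat and open. For an open map, the preimage of the closure equals the closure of the preimage. Concretely: take $(L,p)$ with $L\in\Sigma$ and any open neighborhood $W$ of it. Then $\pi_1(W)$ is an open neighborhood of $L$, so it meets $\Sigma_0$, and therefore $W$ meets $\pi_1^{-1}(\Sigma_0)$. Applying the continuous map $\pi_2$ gives
\[
U=\pi_2(\pi_1^{-1}(\Sigma))\subseteq\pi_2\bigl(\overline{\pi_1^{-1}(\Sigma_0)}\bigr)\subseteq\overline{\pi_2(\pi_1^{-1}(\Sigma_0))}=\overline{U_0}.
\]

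The dimension statement then follows from the recalled fact that a constructible set has the same dimension as its closure: $\dim U=\dim\overline{U_0}=\dim U_0$.

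The only step needing care is the openness of $\pi_1$ (equivalently, that the preimage of the closure is the closure of the preimage). It holds because $\pi_1$ is Zariski-locally trivial with fiber $\p^k$, and the projection $V\times\p^k\to V$ is open. Everything else is formal.
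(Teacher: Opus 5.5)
Your proof is correct and follows essentially the paper's route: the incidence correspondence, Chevalley's theorem for constructibility, the fact that $\pi_2$ is a closed map, and density of the preimage of $\Sigma_0$ inside $\pi_1^{-1}(\Sigma)$. The only difference is how that density is justified. You use openness of the $\p^k$-bundle $\pi_1$, which makes the step explicit and shows that the closure equality holds even without constructibility of $\Sigma_0$. The paper instead picks a dense open subset of $\Sigma$ contained in $\Sigma_0$ and asserts, without comment, that its preimage is dense in $\Gamma_\Sigma$.
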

	\begin{proof}
		Let $\Gamma_{\Sigma}:=\{(L,x)\in\Sigma\times\p^r\mid x\in L\}$ be the incidence correspondence over $\Sigma$.
		\[
		\begin{tikzcd}
			\Gamma_{\Sigma} \ar[r,"\pi_2"]\ar[d,"\pi_1"] &\p^r\\
			\Sigma
		\end{tikzcd}
		\]
		Then $\bigcup\limits_{L\in\Sigma_0}L=\pi_2(\pi_1^{-1}(\Sigma_0))$ is a constructible subset of $\p^r$. As $\Sigma_0$ is constructible, it contains a dense open subset $U$ of $\Sigma$. Then $\pi_1^{-1}(U)$ is a dense open subset of $\Gamma_\Sigma$, hence $\pi_2(\pi_1^{-1}(U))$ is a dense subset of $\pi_2(\Gamma_\Sigma)=\bigcup\limits_{L\in\Sigma}L$. This is because taking the closure and taking the image commute for any closed morphism.
		Thus $\bigcup\limits_{L\in\Sigma_0}L=\pi_2(\pi_1^{-1}(\Sigma_0))\supseteq\pi_2(\pi_1^{-1}(U))$ is a dense subset of $\bigcup\limits_{L\in\Sigma}L$, i.e. $\overline{\bigcup\limits_{L\in\Sigma_0}L}=\bigcup\limits_{L\in\Sigma}L$. Also, they have the same dimension as $\bigcup\limits_{L\in\Sigma_0}L$ is constructible.
	\end{proof}
	
	Now we prove that without changing the dimension of the locus of $k$-planes contained in the base, the base can be assumed to be a closed subvariety of $\g(k,r)$, and furthermore, its dimension can be assumed to be minimal in the following sense.
	
	\begin{lemma}\label{lem_reduce_base_in_grassmannian}
		Let $\Sigma_0$ be a nonempty constructible subset of $\g(k,r)$, and $ S_0:=\bigcup\limits_{L\in\Sigma_0}L\subseteq\p^r$. Let $\dim S_0=m$. Then there is a closed subvariety $\Sigma$ of $\g(k,r)$ of dimension $m-k$ such that the general element of $\Sigma$ is contained in $\Sigma_0$, and $ S:=\bigcup\limits_{L\in\Sigma}L\subseteq\p^r$ has dimension $m$.
	\end{lemma}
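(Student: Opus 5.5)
We first pass to a closed base and then cut it down by general hyperplane sections until the incidence map becomes generically finite.

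\emph{Step 1: closed irreducible base.} Let $\Sigma_1:=\overline{\Sigma_0}\subseteq\g(k,r)$. By \Cref{lem_closure_in_grassmannian}, $\bigcup_{L\in\Sigma_1}L=\overline{S_0}$ has dimension $m$. Decompose $\Sigma_1$ into irreducible components. Then $\overline{S_0}$ is the finite union of the corresponding unions of $k$-planes, each of which is closed. Hence some component $\Sigma_1'$ has $\dim\bigcup_{L\in\Sigma_1'}L=m$. Since $\Sigma_0$ is constructible, it contains a dense open subset of $\Sigma_1$, and hence a dense open subset of $\Sigma_1'$. So we may replace $\Sigma_1$ by $\Sigma_1'$ and assume it is irreducible.

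\emph{Step 2: the incidence map.} Let $\Gamma_{\Sigma_1}\subseteq\Sigma_1\times\p^r$ be the incidence correspondence with projections $\pi_1,\pi_2$ as in the proof of \Cref{lem_closure_in_grassmannian}. The map $\pi_1$ is a $\p^k$-bundle, so $\Gamma_{\Sigma_1}$ is irreducible of dimension $\dim\Sigma_1+k$. Its image under $\pi_2$ is $S_1:=\bigcup_{L\in\Sigma_1}L$, which has dimension $m$. Hence $\dim\Sigma_1\geq m-k$, and the general fiber of $\pi_2$ over $S_1$ has dimension $e:=\dim\Sigma_1+k-m\geq0$. If $e=0$, we take $\Sigma=\Sigma_1$ and are done.

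\emph{Step 3: cutting down.} If $e>0$, we cut $\Sigma_1$ by $e$ general hyperplane sections in the Plücker embedding, or more conveniently by $e$ general ample divisors on $\Sigma_1$. This produces a closed subvariety $\Sigma$ of dimension $m-k$, irreducible by Bertini since $\dim\Sigma_1-e=m-k\geq1$ when $e\geq1$. (If instead $m-k=0$, take a single general point; then $S$ is one $k$-plane and $\dim S=k=m$.) Generality also gives two properties:
\begin{itemize}
\item $\Sigma$ meets the dense open subset $U\subseteq\Sigma_1$ contained in $\Sigma_0$ in a dense open subset of $\Sigma$, so the general element of $\Sigma$ lies in $\Sigma_0$;
\item $\pi_2|_{\pi_1^{-1}(\Sigma)}$ still dominates $S_1$, so $\dim\bigcup_{L\in\Sigma}L=m$.
\end{itemize}

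\emph{Main obstacle: dominance after cutting.} The substantive point is the second property. For a general point $x\in S_1$, the fiber $\pi_2^{-1}(x)$ is identified with the $e$-dimensional locus $\Sigma_x\subseteq\Sigma_1$ of planes through $x$. An ample divisor meets every positive-dimensional projective subvariety, but we need a uniform statement: for general divisors $H_1,\dots,H_e$, a general $x$ should have $\Sigma_x\cap H_1\cap\cdots\cap H_e\neq\varnothing$. To get this, set up the incidence $\{(x,H_1,\dots,H_e): \Sigma_x\cap H_1\cap\cdots\cap H_e\neq\varnothing\}$ and count dimensions. Each $\Sigma_x$ for general $x$ has an $e$-dimensional component, and $e$ hyperplanes always meet an $e$-dimensional projective variety. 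So for every $x$ in a dense open subset of $S_1$, every choice of $(H_1,\dots,H_e)$ works. This shows that $\pi_1^{-1}(\Sigma)\to S_1$ is surjective over that dense open set, hence dominant, and gives $\dim S=m$ because $S$ is closed.
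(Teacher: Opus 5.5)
Your proposal is correct and is essentially the paper's proof: pass to an irreducible component of (the closure of) $\Sigma_0$ whose $k$-planes sweep out an $m$-dimensional set, then cut by general Plücker hyperplane sections. Bertini gives irreducibility, and because the loci $\Sigma_x$ of planes through $x$ are positive-dimensional, they meet the hyperplanes, so the union of $k$-planes stays $m$-dimensional. The only cosmetic difference is that you cut by all $e$ hyperplanes at once via the projective dimension theorem, while the paper cuts one hyperplane at a time; the incidence-and-dimension-count set-up you mention is unnecessary, since that theorem already gives $\Sigma_x\cap H_1\cap\cdots\cap H_e\neq\varnothing$ for every choice of hyperplanes.
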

	\begin{proof}
		Note that there is an irreducible component $\Sigma_1$ of $\Sigma_0$ such that $\bigcup\limits_{L\in\Sigma_1}L$ has dimension $m$ in $\p^r$.
		Obviously $\Sigma_1$ is a constructible subset of $\g(k,r)$.	
		Let $\Sigma$ be the closure of $\Sigma_1$ in $\g(k,r)$, and $$\Gamma_\Sigma:=\{(L,x)\in\Sigma\times\p^r\mid x\in L\}$$ be the incidence correspondence over $\Sigma$.
		By \Cref{lem_closure_in_grassmannian}, $$S:=\textnormal{im}(\Gamma_{\Sigma}\to\p^r)=\bigcup\limits_{L\in\Sigma}L$$ has dimension $m$.
		
		Now suppose $\Sigma$ has dimension greater than $m-k$. Then $\Gamma_{\Sigma}$ has dimension greater than $m$, hence the general fiber of $\Gamma_{\Sigma}\to  S$ has dimension $\geq 1$.
		\[
		\begin{tikzcd}
			\Gamma_{\Sigma} \ar[r,"\pi_2"]\ar[d,"\pi_1"] & S \\
			\Sigma
		\end{tikzcd}
		\]
		Then for the general point $p\in S$, \[\pi_1(\pi_2^{-1}(p))=\{L\in\Sigma\mid p\in L\}\]
		also has dimension $\geq 1$. Hence $\pi_1(\pi_2^{-1}(p))$ meets any hyperplane of $\p^{{r+1\choose k+1}-1}$ which denotes the ambient projective space of $\g(k,r)$.
		Let $H$ be the general hyperplane of $\p^{{r+1\choose k+1}-1}$. 
		Note that if $\dim \Sigma=1$, then $m=k$ so that $S$ is a $k$-plane and $\Sigma$ is a point, a contradiction.
		Thus we may assume $\dim \Sigma\ge 2$.
		Then $H\cap\Sigma$ is a closed subvariety of $\g(k,r)$ by Bertini's theorem.
		Let $\Gamma_{H\cap \Sigma}$ be the incidence variety over $H\cap \Sigma$.
		Then $\Gamma_{H\cap \Sigma}$ still surjects into $ S$ since we have seen $\pi_1(\pi_2^{-1}(p))$ meets $H$ for general $p\in  S$.
		By the constructibility of $\Sigma_1$, the general element of $H\cap \Sigma$ is contained in $\Sigma_1$, hence also in $\Sigma_0$.
		Therefore we can reduce the dimension of $\Sigma\subseteq\p^{{r+1\choose k+1}-1}$ by taking the general hyperplane sections until the dimension of $\Sigma$ becomes $m-k$.
	\end{proof}
	
	Now we prove the dimension bound stated in \Cref{intro_dimension_bound} in the following consecutive theorems.
	First, we show that if the dimension $r$ of the ambient projective space is large enough, then $\dim S^*_{k,d}(X)$ is less than $r$.
	The proof of the following theorem follows the argument in \cite[Theorem A.3]{MR3705293} which considers the case $k=1$.
	We extend it to the higher dimensional cases.
	\begin{theorem}\label{thm_dim_skd<=r-1}
		Let $X\subseteq\p^r$ be a projective variety of dimension $n$ and let $1\le k\le n+1$ and $d\ge 2$.
		If $r>n+1+\dfrac{n+1-k}{d-1}$, then $\dim S^*_{k,d}(X)\leq r-1$. 
	\end{theorem}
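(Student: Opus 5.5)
We argue by contradiction, following the strategy of \cite[Theorem A.3]{MR3705293} with lines replaced by $k$-planes. Suppose $\dim S^*_{k,d}(X)=r$. By \Cref{lem_reduce_base_in_grassmannian} applied to the constructible set $\Sigma_0=\Sigma^*_{k,d}(X)$, we get a closed subvariety $\Sigma\subseteq\g(k,r)$ of dimension $r-k$, i.e.\ a congruence of $k$-planes. Its general member lies in $\Sigma^*_{k,d}(X)$ and its members fill $\p^r$. Let $\Gamma\subseteq\Sigma\times\p^r$ be the incidence correspondence, with $\pi_1:\Gamma\to\Sigma$ and $\pi_2:\Gamma\to\p^r$. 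Then $\pi_2$ is surjective and generically finite between $r$-dimensional varieties. After passing to a resolution $\wt\Gamma\to\Gamma$ (an isomorphism over the general point of $\Sigma$), we may apply \Cref{lem_order_of_ramification_divisor} with $P=\p^r$ and $\mu:\wt\Gamma\to\p^r$; here $c=r-n$.

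The first step is to build the divisor $E_0$. For general $L\in\Sigma$, the intersection $L\cap X$ contains a hypersurface $H_L\subseteq L$ of degree $\delta_L\ge d$. Since $\dim(L\cap X_\sing)\le k-2$, $H_L$ meets $X_{\rm sm}$ in each of its components. The pieces $\pi_1^{-1}(L)\supseteq H_L$ sweep out a divisor in $\Gamma$, because $\dim\Sigma+(k-1)=r-1$. We take $E_0$ to be one of its components (or the sum of components, each weighted appropriately), which dominates $\Sigma$ and maps into $X$ meeting $X_{\rm sm}$. Along the general fibre $\pi_1^{-1}(L)\cong L$, the ideal $\mathcal I_X\cdot\mathcal O_L$ vanishes along $H_L$ with multiplicities summing to $\deg H_L\ge d$. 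So, summing over the components $E_j$ of this divisor with their degrees $e_j$ in $L$, we get $\sum_j e_j\,\mathrm{ord}_{E_j}(\mathcal I_X\mathcal O_\Gamma)\ge d$.

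The second step compares ramification with focal loci. By \Cref{lem_order_of_ramification_divisor}, $\mathrm{ord}_{E_j}(R)\ge (r-n)a_j-1$ with $a_j=\mathrm{ord}_{E_j}$. Restricting to a general fibre $L$, \Cref{lem_focal_locus_is_hypersurface_of_degree_r-k} says $R|_L$ is a hypersurface of degree $r-k$, unless it is all of $L$, which is excluded for general $L$ since $\pi_2$ is generically étale. Hence
\[
r-k\ \ge\ \sum_j e_j\bigl((r-n)a_j-1\bigr)\ \ge\ (r-n)\sum_j e_ja_j-\sum_j e_j .
\]
Writing $\sum_j e_ja_j\ge d$ is not quite enough, since $\sum_j e_j$ could be as large as $\sum_j e_j a_j$. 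So we keep $D:=\sum_j e_ja_j\ge d$ and $e:=\sum_j e_j\le D$, the latter using $a_j\ge1$. This gives $r-k\ge (r-n)D-e\ge (r-n-1)D\ge (r-n-1)d$, hence $(r-n-1)(d-1)\le n+1-k$, that is, $r\le n+1+\frac{n+1-k}{d-1}$. This contradicts the hypothesis.

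The main obstacle is the first step: making $E_0$ rigorous. We need to check that the locus swept by the $H_L$ really has codimension one in $\Gamma$, and that its general points map to $X_{\rm sm}$; this is exactly where $\dim(L\cap X_\sing)\le k-2$ enters. We also need to compute the orders $a_j$ correctly on the resolution, keeping $\pi_2$'s ramification divisor compatible with the focal locus on the general fibre. For this we would work over an open set of $\Sigma$ where $\Sigma$ is smooth and $\Gamma\to\Sigma$ is a $\p^k$-bundle, so that no resolution is needed near the general fibre.
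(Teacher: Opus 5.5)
Your proposal is correct and is essentially the paper's argument. Both reduce to a congruence $\Sigma$ of dimension $r-k$ via \Cref{lem_reduce_base_in_grassmannian}, apply \Cref{lem_order_of_ramification_divisor} to the components of the divisorial part of $\mu^{-1}(X)$ that dominate $\Sigma$, use $ca_j-1\ge (c-1)a_j$, and compare with the degree-$(r-k)$ focal locus on a general fibre to get $r-k\ge (r-n-1)d$. The differences are only cosmetic: the paper desingularizes $\Sigma$ and pulls back the $\p^k$-bundle instead of resolving $\Gamma$, and it phrases your degree bookkeeping as the scheme containment $(c-1)H\subseteq (c-1)E\cap L_0\subseteq R\cap L_0$, after discarding the pieces $E'$, $E''$, $Z_i$ that meet a general fibre only in dimension $\le k-2$.
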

	\begin{proof}
		Suppose $\dim S^*_{k,d}(X)=r$.
		By \Cref{lem_reduce_base_in_grassmannian}, there is a closed subvariety $\Sigma_0\subseteq\g(k,r)$ of dimension $r-k$ such that the general element of $\Sigma_0$ is contained in $\Sigma^*_{k,d}(X)$, and the incidence variety $$\Gamma_{\Sigma_0}\subseteq\g(k,r)\times\p^r$$ over $\Sigma_0$ maps to $\p^r$ surjectively. Let $\rho:\Sigma\to \Sigma_0$ be a surjective desingularization of $\Sigma_0$, and $\pi:\Gamma_{\Sigma}\to \Sigma$ be the pullback of $\Gamma_{\Sigma_0}\to \Sigma_0$ by $\rho:\Sigma\to \Sigma_0$ as follows.
		\[
		\begin{tikzcd}
			\Gamma_\Sigma \ar[r,swap,"\eta"]\ar[d,"\pi"]\ar[rr,bend left=18,"\mu"]\ar[rd,phantom,"\square"] &\Gamma_{\Sigma_0} \ar[r,swap,"\mu_0"]\ar[d,"\pi_0"] &\p^r \\
			\Sigma \ar[r,swap,"\rho"] &\Sigma_0
		\end{tikzcd}
		\]
		Then $\Sigma$ is a smooth projective variety of dimension $r-k$, and $\Gamma_\Sigma$ is a smooth projective variety of dimension $r$. As $\eta$ is a base change of $\rho$, it is surjective. Hence $\mu$ is surjective and generically finite.
		
		Let $\mu^{-1}(X)$ be the scheme-theoretic inverse image of $X$. 
		Then $$\dim \mu^{-1}(X)\leq r-1$$ by the surjectivity of $\mu$. Suppose $\dim \mu^{-1}(X)\leq r-2$. Let $L_0$ be the general fiber of $\pi$. Then $L_0$ must meet $\mu^{-1}(X)$ on the locus of dimension $\geq k-1$ as the general element of $\Sigma$ is contained in $\Sigma_{k,d}^*(X)$. Hence the general fiber of $\pi|_{\mu^{-1}(X)}$ has dimension $\geq k-1$, and with the fact that $\pi|_{\mu^{-1}(X)}$ is surjective, we get a contradiction to $\dim\Sigma=r-k$.
		Let $D$ be the largest effective divisor of $\Gamma_\Sigma$ contained in $\mu^{-1}(X)$, and let $Z_1,\dots,Z_s$ be the irreducible components and embedded components of $\mu^{-1}(X)$ of dimension $\leq r-2$.
		Write $D$ as a sum of effective divisors $$D=E+E'+E''$$ such that the prime divisors of $E$ dominate $\pi$ and $\mu(E)\cap X_{\textnormal{sm}}\neq\varnothing$, the prime divisors of $E'$ dominate $\pi$ and $\mu(E')\cap X_{\textnormal{sm}}=\varnothing$, and the prime divisors of $E''$ do not dominate $\pi$.
		
		Let $L_0$ be the general fiber of $\pi$. Then $E''$ does not intersect with $L_0$. Also if $(L_0,x)\in$ $E'\cap L_0$, then $x\in X_{\textnormal{sing}}$. Hence $$\dim (E'\cap L_0)\leq k-2.$$ Also $\dim (Z_i\cap L_0)\leq k-2$ since if $Z_i$ meets the general fiber $L_0$, then $Z_i$ surjects into $\Sigma$ where $\dim Z_i\leq r-2$, $\dim \Sigma=r-k$.
		Hence the scheme-theoretic intersection $E\cap L_0$ scheme-theoretically contains a degree $\geq d$ hypersurface $H$ of $L_0$ possibly except some locus of dimension $\leq k-2$. However, as $E\cap L_0$ is closed, it should contain the entire $H$ scheme-theoretically since $H$ is Cohen--Macaulay so that it does not have embedded points. In particular, we have $H\subseteq E$ as schemes. Let $c:=\codim X$ and $(c-1)H$ be the divisor of $L_0$. Then we consider $(c-1)H$ as a closed subscheme of $\Gamma_\Sigma$. Let $(c-1)E$ be the divisor of $\Gamma_\Sigma$. Then $(c-1)H\subseteq (c-1)E$ as schemes. Thus $$(c-1)H\subseteq (c-1)E\cap L_0$$ as schemes.
		
		Now write $E=\sum\limits_{i}a_iE_i$ where $a_i\in\mathbb{N}$, and $E_i$'s are distinct prime divisors of $E$.
		Let $R$ be the ramification divisor of $\mu$. Then by \Cref{lem_order_of_ramification_divisor}, $$\textnormal{ord}_{E_i}R\geq (c-1)a_i,$$ hence $R\geq (c-1)E$.
		Then $$R\cap L_0\supseteq (c-1)E\cap L_0\supseteq (c-1)H$$ as schemes.
		By \Cref{lem_focal_locus_is_hypersurface_of_degree_r-k}, $R\cap L_0$ is a degree $r-k$ hypersurface of $L_0$ since the general fiber $L_0$ is not contained in $R$.
		As $(c-1)H$ is a degree $\geq (c-1)d$ hypersurface of $L_0$, we get $r-k\geq (c-1)d$. This implies $$r\leq n+1+\dfrac{n+1-k}{d-1}$$ which is a contradiction by the assumption.	
	\end{proof}
	
	Then we show a general projection does not reduce the dimension of $S^*_{k,d}$.
	
	\begin{lemma}\label{lem_general_projection_preserves_dim_skd}
		Let $X\subseteq\p^r$ be a projective variety of dimension $n$ and let $1\le k\le n+1$ and $d\ge 2$.
		Suppose $m:=\dim S^*_{k,d}(X)\geq n+1$.
		If $r\geq n+k+2$, then for the general projection $\pi_q:\p^r\backslash\{q\}\to \p^{r-1}$, we have 
		\begin{equation*}
			\dim S^*_{k,d}(\pi_q(X))\begin{cases}
				\geq m &\text{if $m<r$}\\
				=r-1 &\text{if $m=r$}.
			\end{cases}
		\end{equation*}	
		Here $\pi_q(X)$ is considered as a projective variety in $\p^{r-1}$, hence $S^*_{k,d}(\pi_q(X))$ is the union of some $k$-planes in $\p^{r-1}$.
	\end{lemma}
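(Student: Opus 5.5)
The strategy is to push the family of $k$-planes realizing $S^*_{k,d}(X)$ forward along a general projection and check that the two defining properties survive. By \Cref{lem_reduce_base_in_grassmannian}, choose a closed subvariety $\Sigma\subseteq\g(k,r)$ of dimension $m-k$ whose general member $L$ lies in $\Sigma^*_{k,d}(X)$, with $S=\bigcup_{L\in\Sigma}L$ of dimension $m$. Since $r\ge n+k+2$ and $\dim X\le n$, a general point $q$ does not lie on the cone $\bigcup_{x\in X}\langle x,L\rangle$ for general $L$. Hence $q\notin L$ and $\pi_q|_L$ is an isomorphism onto a $k$-plane $L'=\pi_q(L)\subseteq\p^{r-1}$. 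The hypersurface $H\subseteq L\cap X$ of degree $\ge d$ maps isomorphically to a hypersurface $\pi_q(H)\subseteq L'\cap \pi_q(X)$ of the same degree. This inclusion is scheme-theoretic, because $\pi_q(H)\subseteq\pi_q(X)$ is the image of a closed subscheme under a finite morphism and $\pi_q|_L$ is an isomorphism.

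The next step is the singularity condition $\dim(L'\cap\pi_q(X)_{\sing})\le k-2$. The singular locus of $\pi_q(X)$ is contained in $\pi_q(X_{\sing})$ together with the double-point locus of $\pi_q|_X$ and the images of points where $q\in T_xX$. For general $q$, the non-isomorphism locus of $\pi_q$ on $X$ has dimension at most $2n+1-r$, using $\dim\sigma_2X\le 2n+1$ and $\dim\tan X\le 2n$. I would show that for general $q$ and general $L\in\Sigma$, the preimage of $L'$ meets this bad locus in dimension $\le k-2$. The natural tool is an incidence count over $\Sigma\times\p^r$: the points of $X$ that become singular, or that fold onto $L'$, form a family of dimension bounded by $n+1$ over each $L$, namely the cone $\langle q,L\rangle\cap X$ taken over the lines through $q$. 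Intersecting with $L$ and using $r\ge n+k+2$ should give dimension $\le k-2$. We also need $L'\cap\pi_q(X)$ to contain no more than controlled extra components, but extra components do no harm, since the required inclusion is only one-directional. I expect this to be the main obstacle. The difficulty is the uniformity: $q$ must be chosen generally \emph{before} $L$, but the condition must hold for a general member of the $(m-k)$-dimensional family. One therefore has to do a dimension count on the incidence $\{(q,L)\}$ rather than argue one $L$ at a time.

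Finally comes the dimension of the image. The image family $\Sigma'=\{\pi_q(L)\}$ sweeps out $\pi_q(S)$, which lies inside $S^*_{k,d}(\pi_q(X))$ up to the closure and constructibility handled by \Cref{lem_closure_in_grassmannian}. If $m<r$, then $S$ is a proper subvariety and a general $q\notin S$, so $\pi_q|_S$ is finite and $\dim\pi_q(S)=m$, which gives $\ge m$. If $m=r$, then $\pi_q(S)=\p^{r-1}$, which gives $\dim S^*_{k,d}(\pi_q(X))=r-1$, the maximal possible value. In the case $m=r$ the general $L$ does pass near $q$, but $q\notin L$ still holds for general $L$, so the first step applies to a dense subfamily of $\Sigma$.
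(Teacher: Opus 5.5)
\textbf{There is a genuine gap in your second step.} Your first and third steps are sound and parallel the paper. The paper also passes to a closed family $\Sigma$ of dimension $m-k$ via \Cref{lem_reduce_base_in_grassmannian}, uses $\dim J(L,X)\le n+k+1<r$, and takes $q\notin S$ when $m<r$. You are also right that the hypersurface condition only needs $q\notin L$.

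The gap is the second step, which is the real content of the lemma. You never establish $\dim(\pi_q(L)\cap\pi_q(X)_{\sing})\le k-2$, and the mechanism you sketch does not work. The bound $2n+1-r$ on the non-isomorphism locus $B_q$ of $\pi_q|_X$ cannot simply be ``intersected with $L$''. The reason is that $L$ is not in general position with respect to $X$: it meets $X$ in a $(k-1)$-dimensional hypersurface. As soon as $2n+1-r\ge k-1$ (e.g.\ $r=n+k+2$ and $n\ge 2k$), the dimension of $B_q$ alone does not prevent a whole component of $L\cap X$ from lying in $B_q$. Your ``cone $\langle q,L\rangle\cap X$ over the lines through $q$'' is not a set whose dimension controls this. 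Saying the count ``should give dimension $\le k-2$'' leaves the main point unproved.

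Two ideas are missing.

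First, you already have $q\notin J(L,X)$ for general $L$, but you use it only to get $q\notin L$. It gives much more:
\begin{itemize}
\item $\langle q,L\rangle\cap X=L\cap X$;
\item $\pi_q^{-1}(\pi_q(x))\cap X=\{x\}$ for every $x\in L\cap X$.
\end{itemize}
So no new points and no double points appear over $\pi_q(L)$. It follows that $\pi_q(L)\cap\pi_q(X)_{\sing}\subseteq\pi_q(L\cap Z)$ for the closed set $Z:=X_{\sing}\cup\{x\in X_{\textnormal{sm}}: q\in T_xX\}$.

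Second, you must bound $\dim(L\cap Z)$. The paper does this as follows.
\begin{itemize}
\item It fixes a general $L_0\in\Sigma$. Since $\dim(L_0\cap X_{\sing})\le k-2$, it can pick a smooth point $p_i$ of $X$ on each $(k-1)$-dimensional component $H_i$ of $L_0\cap X$.
\item It chooses $q$ outside $J(L_0,X)\cup\bigcup_iT_{p_i}X$, and also outside $S$ if $m<r$.
\item Then $Z\cap H_i\subsetneq H_i$ for each $i$, so $\dim(L_0\cap Z)\le k-2$.
\item Openness of $\{L: q\notin J(L,X)\}$ and upper semicontinuity of $L\mapsto\dim(L\cap Z)$ propagate both conditions to a dense open subfamily of $\Sigma$.
\end{itemize}
This order of choices ($L_0$, then $q$, then openness in $L$) is exactly what resolves the uniformity problem you flagged.

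Your incidence idea can also be made to work, but the count has to be done pointwise. Once $q\notin J(L,X)$, only ramification matters. The centers $q$ with $q\in T_xX$ for some $x\in L\cap X_{\textnormal{sm}}$ form a set of dimension $\le (k-1)+n<r$. A dimension count on the corresponding incidence in $\Sigma\times\p^r$ then handles general $L$ for general $q$.
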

	\begin{proof}
		By \Cref{lem_reduce_base_in_grassmannian}, there is a closed subvariety $\Sigma\subseteq \g(k,r)$ of dimension $m-k$ such that the general element of $\Sigma$ is contained in $\Sigma^*_{k,d}(X)$, and $ S:=\bigcup\limits_{L\in \Sigma}L$ has dimension $m$.
		
		\[
		\begin{tikzcd}
			\Gamma_{\Sigma} \ar[r,"\pi_2"]\ar[d,"\pi_1"] & S \\
			\Sigma
		\end{tikzcd}
		\]
		
		Let $\Sigma_0$ be the dense open subset of $\Sigma$ such that $\Sigma_0\subseteq \Sigma^*_{k,d}(X)$. Then $\bigcup\limits_{L\in\Sigma_0}L$ also has dimension $m$ by \Cref{lem_closure_in_grassmannian}.
		Pick a general element $L_0\in \Sigma_0$ so that $L_0\nsubseteq X$, which is ensured by the condition $m\geq n+1$.
		Then
		\[
			\dim (L_0\cap X_{\text{sing}})\leq k-2,
		\]
		and $L_0\cap X$ contains a degree $\geq d$ hypersurface of $L_0$. In particular, the union of the irreducible components of $L_0\cap X$ of dimension $k-1$ is a degree $\geq d$ hypersurface of $L_0$. Let $H_1,\dots ,H_s$ be the irreducible components of $L_0\cap X$ of dimension $k-1$. As $\dim (L_0\cap X_{\text{sing}})\leq k-2$, each $H_i$ contains a smooth point $p_i$ of $X$.
		Let $J(L_0,X)$ be the join of $L_0$ and $X$.
		As $\dim J(L_0,X)\leq n+k+1$ and $\dim T_{p_i}X=n$, we may pick a point $q\in\p^r$ not contained in any of
		\[
			J(L_0,X),T_{p_1}X,\dots,T_{p_s}X.
		\]
		If $m\neq r$, then we furthermore require $q\notin S$.
		We now fix the point $q$.
		
		Let
		\[
			\Xi :=\overline{\{(L,z,x,y)\in \Sigma\times\p^r\times X\times \p^r\mid x\neq y, y\in L, z\in \overline{xy}\}}.
		\]
		Let $\Theta:=p_{12}(\Xi )\subseteq \Sigma\times\p^r$ be the image of the $(1,2)$-th projection. Then
		\[
			\Theta=\overline{\{(L,z)\in \Sigma\times\p^r\mid z\in J(L,X)\}}
		\]
		where $J(L,X)$ denotes the join of $L$ and $X$. Let $\pi_1:\Sigma\times \p^r\to \Sigma$ be the first projection map. 
		Let
		\[
			Y:=(\Sigma\times \{q\})\cap ((\Sigma\times \p^r)\backslash \Theta)
		\]
		which is an open subset of $\Sigma\times \{q\}$.
		Then as $q\notin J(L_0,X)$, we have $(L_0,q)\in Y$. Hence $Y$ is nonempty, and thus it is a dense open subset of $\Sigma\times \{q\}$. Let $\Sigma_1:=\pi_1(Y)$. Then $\Sigma_1$ is a dense open subset of $\Sigma$, and $L_0\in \Sigma_1$. Furthermore, for any $L\in \Sigma_1$, we have $q\notin J(L,X)$ by the definition of $\Sigma_1$. Now let $\Sigma_2:=\Sigma_0\cap \Sigma_1$ which is a dense open subset of $\Sigma$.
		
		Note that $q\notin J(L,X)$ ensures that the projection from $q$ does not introduce a new intersection point of $X$ and $L$ after the projection. In other words, if $p\in\pi_q(L)\cap\pi_q(X)$, then $p=\pi_q(\widetilde{p})$ for some $\widetilde{p}\in L\cap X$. Furthermore, there is no point $x\in X$ such that $\pi_q(x)=p$ unless $x=\widetilde{p}$.
		
		Let $g:X_{\textnormal{sm}}\to\g(n,r)$ be the Gauss map sending $p\in X_{\textnormal{sm}}$ to $T_pX$. Let
		\[
			W:=\{T\in\g(n,r)\mid q\notin T\}.
		\]
		Then $W$ is a nonempty open subset of $\g(n,r)$. Hence $g^{-1}(W)$ is a nonempty open subset of $X$. Let $Z:=X\backslash g^{-1}(W)$ which is a proper closed subset of $X$.
		Define
		\[
			\Sigma_3:=\{L\in \Sigma_2\mid \dim(L\cap Z)\leq k-2\}
		\]
		which is an open subset of $\Sigma_2$ by the upper semicontinuity. Now we will show $L_0\in \Sigma_3$. As $q$ is not contained in any of
		\[
			J(L_0,X),T_{p_1}X,\dots,T_{p_s}X,X,
		\]
		the projection $\pi_q|_X:X\to \p^{r-1}$ maps $p_i$ to a smooth point of $\pi_q(X)$. Hence $\pi_q(H_i)$ has the smooth point $\pi_q(p_i)$ of $\pi_q(X)$. This means the general point in $\pi_q(H_i)$ is a smooth point of $\pi_q(X)$.
		Hence for each $i$ and for the general $p\in H_i$, $q$ is not contained in $T_pX$.
		Hence the locus of $p\in L_0\cap X_{\text{sm}}$ such that $q\in T_pX$ has dimension $\leq k-2$. Recall that we also have $\dim (L_0\cap X_{\text{sing}})\leq k-2$.
		Thus $L_0\in \Sigma_3$. In particular, $\Sigma_3$ is nonempty, hence is a dense open subset of $\Sigma_2$. By the construction, for all $L\in \Sigma_3$, we have
		\[
			\dim (\pi_q(L)\cap \pi_q(X)_{\text{sing}})\leq k-2,
		\]
		and $\pi_q(L)\cap\pi_q(X)$ scheme-theoretically contains a degree $\geq d$ hypersurface of $\pi_q(L)$. Hence $\pi_q(L)$ is an element of $\Sigma^*_{k,d}(\pi_q(X))$ for all $L\in\Sigma_3$.
		
		As $\Sigma_3$ is dense open in $\Sigma$, $ S_3:=\bigcup\limits_{L\in\Sigma_3}L$ has dimension $m$ by \Cref{lem_closure_in_grassmannian}.
		As $\pi_q(L)$ is an element of $\Sigma^*_{k,d}(\pi_q(X))$ for all $L\in\Sigma_3$, we have $\pi_q( S_3)\subseteq S^*_{k,d}(\pi_q(X))$. Therefore the result follows since $\pi_q( S_3)$ has dimension $m$ if $m<r$, and has dimension $r-1$ if $m=r$. Also the choice of $q\in\p^r$ was general in the sense that we only required it to avoid finitely many proper closed subsets of $\p^r$.
	\end{proof}
	
	Finally, we deduce the following bound of $\dim S^*_{k,d}$.

	\begin{thmalphabetmaintext}\label{thm_dim_skd_max}
		Let $X\subseteq\p^r$ be a projective variety of dimension $n$ and let $1\le k\le n+1$ and $d\ge 2$. Then \[\dim S^*_{k,d}(X)\leq \max(n+1+\dfrac{n+1-k}{d-1},n+k).\]
	\end{thmalphabetmaintext}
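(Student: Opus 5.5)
We argue by contradiction, combining \Cref{thm_dim_skd<=r-1} with repeated general projections via \Cref{lem_general_projection_preserves_dim_skd}. Set $B:=\max(n+1+\frac{n+1-k}{d-1},n+k)$ and $m:=\dim S^*_{k,d}(X)$, and suppose $m>B$. Then in particular $m\ge n+k+1\ge n+1$, so the hypothesis $m\ge n+1$ of \Cref{lem_general_projection_preserves_dim_skd} holds. Also $r\ge m\ge n+k+1$.

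\textbf{Reduction to $m=r$.} If $m<r$, then $r\ge m+1\ge n+k+2$, so \Cref{lem_general_projection_preserves_dim_skd} applies. For a general $q$, the image $X':=\pi_q(X)\subseteq\p^{r-1}$ is again a projective variety of dimension $n$ (a general point lies off $X$, and the projection is finite). It satisfies $\dim S^*_{k,d}(X')\ge m$, and since $S^*_{k,d}(X')\subseteq\p^{r-1}$ we also have $\dim S^*_{k,d}(X')\le r-1$. Replace $X$ by $X'$ and $m$ by $m':=\dim S^*_{k,d}(X')$. The inequality $m'>B$ still holds, and the codimension $r-m$ drops by at least one. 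Iterating, we reach an embedding $X\subseteq\p^{r}$, with $r$ renamed, of an $n$-dimensional variety having $\dim S^*_{k,d}(X)=r>B$. If at some stage $m'=r-1$ exactly, we have already arrived at this situation.

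\textbf{Contradiction.} Now $r=\dim S^*_{k,d}(X)>B\ge n+1+\frac{n+1-k}{d-1}$. \Cref{thm_dim_skd<=r-1} then gives $\dim S^*_{k,d}(X)\le r-1$, contradicting $\dim S^*_{k,d}(X)=r$. Hence $m\le B$.

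The main point to verify is the bookkeeping in the projection step. First, at every stage the inequality $r\ge n+k+2$ must hold whenever $m<r$; this follows from $m\ge n+k+1$, which is exactly where the term $n+k$ in the maximum enters. Second, the projected scheme must stay a variety of the same dimension $n$, so that the constants $n,k,d$ in \Cref{thm_dim_skd<=r-1} are unchanged. One should also note that the case $m=r$ of \Cref{lem_general_projection_preserves_dim_skd} is never needed, since we stop projecting once $m=r$. The process terminates because $r$ strictly decreases while $m$ does not decrease.
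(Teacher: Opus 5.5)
Your proposal is correct and takes essentially the same route as the paper: you project generically via \Cref{lem_general_projection_preserves_dim_skd} until $S^*_{k,d}$ fills the ambient space, then contradict \Cref{thm_dim_skd<=r-1}. The only difference is bookkeeping. You stop projecting as soon as $m=r$, so you never need the $m=r$ branch of the lemma, and you avoid the paper's case split according to which term of the maximum is larger; the paper instead projects down to a prescribed ambient dimension ($r=n+k+1$, or the unique integer in $\bigl(n+1+\frac{n+1-k}{d-1},\,n+2+\frac{n+1-k}{d-1}\bigr]$).
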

	\begin{proof}
		We assume $r> \max(n+1+\dfrac{n+1-k}{d-1},n+k)$, or the result follows trivially. We divide into two cases.
		
		For the first case, suppose $n+k+1>n+1+\dfrac{n+1-k}{d-1}$. If $$\dim S^*_{k,d}(X)\geq n+k+1,$$ then we take general projections until $r=n+k+1$. Let $\pi(X)$ be the final image of $X$ under such general projections.
		Then we get $\dim S^*_{k,d}(\pi(X))= n+k+1$ from \Cref{lem_general_projection_preserves_dim_skd}, and we get $\dim S^*_{k,d}(\pi(X))\leq n+k$ from \Cref{thm_dim_skd<=r-1}. This is a contradiction. Hence $\dim S^*_{k,d}(X)\leq n+k$.
		
		For the second case, suppose $n+k+1\leq n+1+\dfrac{n+1-k}{d-1}$. If $$\dim S^*_{k,d}(X)>n+1+\dfrac{n+1-k}{d-1},$$ then we take general projections until $r$ satisfies \[n+1+\dfrac{n+1-k}{d-1}<r\leq n+2+\dfrac{n+1-k}{d-1}.\] Let $\pi(X)$ be the final image of $X$ under such general projections. Then we get $\dim S^*_{k,d}(\pi(X))= r$ from \Cref{lem_general_projection_preserves_dim_skd}, and we get $\dim S^*_{k,d}(\pi(X))\leq r-1$ from \Cref{thm_dim_skd<=r-1}. This is a contradiction. Hence $\dim S^*_{k,d}(X)\leq n+1+\dfrac{n+1-k}{d-1}$.
	\end{proof}

	\begin{example}\label{example:bound_sharp_Vero}
		Let $X=\nu_2(\p^2)\subseteq\p^5$ be the Veronese surface.
		Then we get $\dim S_{2,2}(X)\le 4$ by \Cref{thm_dim_skd_max}.
		As $S_{2,2}(X)=\sigma_2(X)$ is a fourfold, the equality of the bound \Cref{thm_dim_skd_max} holds in this case.
	\end{example}
	
	\section{Applications}\label{sec:applications}
	\subsection{A lower bound on the number of linear syzygies}\label{subsec_lower_bound_betti}
	
	In this subsection, we give a lower bound on the number of independent linear syzygies of $d$-forms satisfying $\K_d$ and give some examples. As before, all results in this subsection for projective varieties satisfying $\K_d$ also apply to the projective varieties satisfying \textbf{N$_{d,2}$} (\Cref{prop_Nd2_implies_Kd}).
	
	\begin{remark}\label{rem_lower_bound_linear_syzygies}
		Let $X\subseteq\p^r$ be a projective variety satisfying $\K_d$ by $d$-forms $F_0,\dots,F_s$. If $\dim S_{k,d}(X)\leq r-1$, then there are at least $r+1-k$ independent linear syzygies of $F_0,\dots,F_s$. Indeed, if there are at most $r-k$ independent linear syzygies of $F_0,\dots,F_s$, then we have $S_{k,d}(X)\cup X=\p^r$ by \Cref{thm_equations_of_skd}, which is a contradiction.
	\end{remark}
	
	\begin{remark}\label{rem_skd_not_full_necessary}
		The condition $S_{k,d}(X)\neq \p^r$ in \Cref{rem_lower_bound_linear_syzygies} is necessary since without this condition, there can be fewer than $r+1-k$ linear syzygies. For example, the rational normal curve $C\subseteq\p^3$ has the Betti number $\beta_{2,1}(C)=2<3+1-1$, and $\g(1,4)\subseteq\p^9$ has the Betti number $\beta_{2,1}(\g(1,4))=5<9+1-1$ where both varieties satisfy $\N_{2,2}$, hence also $\K_2$.
	\end{remark}
	
	Using the dimension bound of $S^*_{k,d}(X)$ in \Cref{thm_dim_skd<=r-1}, we get the following corollary.

	\begin{coralphabetmaintext}\label{cor_lower_bound_beta_2d-1}
		Let $X\subseteq\p^r$ be a projective variety of dimension $n$ and let $1\le k\le n+1$ and $d\ge 2$.
		Suppose that the homogeneous ideal $I_X$ is generated by $d$-forms satisfying $\N_{d,2}$.
		If $r>n+1+\dfrac{n+1-k}{d-1}$ and $\dim X_{\sing}\leq k-2$, then $\beta_{2,d-1}(X)\geq r+1-k$.
	\end{coralphabetmaintext}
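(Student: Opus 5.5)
The plan is to combine \Cref{thm_dim_skd<=r-1} with \Cref{rem_lower_bound_linear_syzygies}. We work with the minimal generators $F_0,\dots,F_s$ of $I_X$. By hypothesis these are $d$-forms satisfying $\N_{d,2}$. By \Cref{prop_Nd2_implies_Kd}, and since the generators are already all of degree $d$ and minimal so no redundancy needs to be removed, $(X,F_0,\dots,F_s)$ satisfies $\K_d$.

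First we use the hypothesis $\dim X_{\sing}\le k-2$. By \Cref{rem_skd=skd*_if_sing_small} it gives $S_{k,d}(X)=S^*_{k,d}(X)$. Since $r>n+1+\frac{n+1-k}{d-1}$, \Cref{thm_dim_skd<=r-1} yields $\dim S_{k,d}(X)\le r-1$. Also $\dim X=n\le r-1$, because the hypothesis on $r$ forces $r>n+1$. Hence $S_{k,d}(X)\cup X$ is a proper closed subset of $\p^r$.

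Next we apply \Cref{thm_equations_of_skd}, which gives $S_{k,d}(X)\cup X=V(I_{r+1-k}(M))$. Here $M$ has one row for each element of a generating set of linear syzygies, and we may take this set to be a basis of the degree-$(d+1)$ part of the syzygy module, so $m=\beta_{2,d-1}(X)$. Note that the $(i,j)$ convention gives $\beta_{2,d-1}=\dim\tor_2(S_X,\c)_{d+1}$, which is exactly the number of minimal linear syzygies among the degree-$d$ generators. A point worth checking is that any generating set of linear syzygies produces the same row space of $M(p)$ up to linear combinations of rows, so the rank conditions do not depend on the choice, and taking a basis is legitimate.

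If $m\le r-k$, then $M$ has fewer than $r+1-k$ rows. Then $I_{r+1-k}(M)=0$ and $V(I_{r+1-k}(M))=\p^r$, contradicting the properness just established. Therefore $\beta_{2,d-1}(X)=m\ge r+1-k$. The main subtlety, and the only real obstacle, is the bookkeeping identifying $m$ with $\beta_{2,d-1}(X)$ through minimality. Everything else is a direct assembly of earlier results, essentially \Cref{rem_lower_bound_linear_syzygies}.
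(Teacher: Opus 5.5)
Your proposal is correct and follows the paper's proof. The paper combines \Cref{thm_dim_skd<=r-1} with $S_{k,d}(X)=S^*_{k,d}(X)$ to get $\dim S_{k,d}(X)\le r-1$, and then invokes \Cref{rem_lower_bound_linear_syzygies}, which is exactly your argument: fewer than $r+1-k$ rows would force $I_{r+1-k}(M)=0$ and hence $S_{k,d}(X)\cup X=\p^r$ by \Cref{thm_equations_of_skd}. Your added bookkeeping (identifying the number of rows of $M$ with $\beta_{2,d-1}(X)$ via minimality, and checking $\dim X<r$) only makes explicit what the paper's remark leaves implicit.
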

	\begin{proof}
		If $r>n+1+\dfrac{n+1-k}{d-1}$, then $\dim S^*_{k,d}(X)\leq r-1$ by \Cref{thm_dim_skd<=r-1}. Since $\dim X_\sing\leq k-2$, we also have $\dim S_{k,d}(X)\leq r-1$ since $S_{k,d}(X)=S^*_{k,d}(X)$ (\Cref{rem_skd=skd*_if_sing_small}). Hence the result follows from \Cref{rem_lower_bound_linear_syzygies}.
	\end{proof}
	
	The following gives examples $X$ that attain the bound in \Cref{cor_lower_bound_beta_2d-1}.
	
	\begin{example}[A variety $X$ for which $\beta_{2,d-1}(X)$ attains the lower bound]\label{exp_sharp_cases_of_lower_bound}
		Let $2\leq d\leq 5$ be an integer and $\seg(\p^{d-1}\times\p^{d})\subseteq\p^{d^2+d-1}$ be the embedded Segre variety and \[X=\sigma_{d-1}(\seg(\p^{d-1}\times\p^{d}))\] be its $(d-1)$-secant variety.
		Then $\dim X=d^2+d-3$ and $\dim X_{\text{sing}}=d^2+d-7$. Also the minimal free resolution of $X$ is given by the Eagon--Northcott complex with the following Betti table.
		\[\begin{array}{c|ccc}
			&0 &1 &2 \\ \hline
			0&1 & & \\
			d-1& &d+1 &d 
		\end{array}\]
		Consider the function $f(n,k,d):=\lfloor n+1+\dfrac{n+1-k}{d-1}\rfloor$ where $n=\dim X=d^2+d-3$.
		Then
		\begin{itemize}
			\item $f(n,k,d)=d^2+d-2$ when $k=d^2$,
			\item $f(n,k,d)\ge d^2+d-1$ when $k<d^2$.
		\end{itemize}
		Hence the lower bound of $\beta_{2,d-1}(X)$ obtained from \Cref{cor_lower_bound_beta_2d-1} is $d$. Here, the condition $d\le 5$ guarantees that $\dim X_\sing\le d^2-2$.
		Since $\beta_{2,d-1}(X)=d$, this example attains the lower bound in \Cref{cor_lower_bound_beta_2d-1}.		
	\end{example}

	\begin{remark}\label{rem_smooth_threefold_in_p5}
		Let $X\subseteq\p^5$ be a smooth threefold whose homogeneous ideal $I_X$ is minimally generated by quartic forms satisfying $\N_{4,2}$.
		Then we get $\beta_{2,3}(X)\geq 4$ from \Cref{cor_lower_bound_beta_2d-1}. Note that this is obtained from the bound $\dim S_{2,4}(X)\leq 4$ (\Cref{thm_dim_skd<=r-1}). Also if $\beta_{2,3}(X)=4$, then $\dim S_{1,4}(X)=5$ by \Cref{rem_lower_bound_linear_syzygies}.
	\end{remark}
	
	\subsection{Double point loci and $S_{k,2}(X)$}
	Let $\pi_p:X\to\p^{r-1}$ be the projection of $X$ from the general point $p\in\p^r\backslash X$.
	The following proposition gives another characterization of $S_{k,d}(X)$ in terms of projections.
	
	\begin{proposition}[Characterization of $S_{k,d}(X)$ in terms of the projection]\label{prop_Skd_characterization}
		Let $X\subseteq\p^r$ be a projective variety and let $k\ge 1$ and $d\ge 2$. Then for a point $p\in\p^r\backslash X$, it holds that $p\in S_{k,d}(X)$ if and only if the $d$-ple point locus \[Z_d(p,X)=\{x\in\p^{r-1}\mid \leng\ \pi_p^{-1}(x)\geq d\}\]
		contains a $(k-1)$-plane.
	\end{proposition}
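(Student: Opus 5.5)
The plan is to pass between $k$-planes through $p$ and $(k-1)$-planes in $\p^{r-1}$ using the cone construction. Fix $p\in\p^r\backslash X$ and the projection $\pi_p:\p^r\backslash\{p\}\to\p^{r-1}$. A $(k-1)$-plane $\Lambda\subseteq\p^{r-1}$ corresponds to the $k$-plane $L=\langle p,\Lambda\rangle=\overline{\pi_p^{-1}(\Lambda)}$ through $p$, and this gives a bijection. The first observation is that if $p\in S_{k,d}(X)$, then the relevant $k$-plane may be taken to contain $p$. This holds by definition, since $p$ lies on some $L\in\Sigma_{k,d}(X)$. So the statement reduces to the following claim: for a $k$-plane $L\ni p$, the scheme $L\cap X$ contains a hypersurface of degree $\ge d$ in $L$ if and only if $\leng\pi_p^{-1}(x)\ge d$ for every $x\in\Lambda:=\pi_p(L\backslash\{p\})$.

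For $x\in\Lambda$, let $l_x=\langle p,x\rangle\subseteq L$. The fiber $\pi_p^{-1}(x)$ is the scheme $l_x\cap X$, since $p\notin X$.

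($\Rightarrow$) Suppose $H\subseteq L\cap X$ with $H$ a hypersurface of degree $e\ge d$ in $L$. Then $p\notin H$, because $p\notin X$. Hence no line $l_x$ is contained in $H$, and $l_x\cap H$ is a zero-dimensional subscheme of the line $l_x$ of length exactly $e$. It follows that $\leng(l_x\cap X)\ge\leng(l_x\cap H)=e\ge d$ for all $x\in\Lambda$, i.e.\ $\Lambda\subseteq Z_d(p,X)$.

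($\Leftarrow$) Suppose $\Lambda\subseteq Z_d(p,X)$, so every line of $L$ through $p$ meets $X$ in length $\ge d$. I need to produce a hypersurface of degree $\ge d$ in $L$ contained in $L\cap X$; this is the main step. Choose coordinates on $L\cong\p^k$ with $p=[0:\dots:0:1]$, and let $t$ be the last coordinate. The ideal of $L\cap X$ in $L$ is generated by the restrictions $g_1,\dots,g_N$ of generators of $I_X$. None of these restrictions lies in $(t)$-torsion trouble, since $p\notin X$. Since $p\notin L\cap X$, the projection $L\cap X\to\Lambda$ is finite, so the pushforward $\mathcal{F}=\pi_*\mathcal{O}_{L\cap X}$ is a coherent sheaf on $\Lambda\cong\p^{k-1}$. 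The fiber lengths give $\dim\mathcal{F}\otimes\kappa(x)\ge d$ at every point of $\Lambda$.

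I would then restrict to the open set where $\mathcal{F}$ is locally free, of rank $e\ge d$; this set is nonempty because every fiber is nonempty, so $\mathcal{F}$ has full support. There, $\mathcal{F}$ is a locally free module over $\mathcal{O}_\Lambda$ on which multiplication by $t$ acts. Its characteristic polynomial in $t$ is a monic polynomial of degree $e$ over the base and, by Cayley–Hamilton, annihilates $\mathcal{F}$. This yields a degree-$e$ form $h$ vanishing on that open part of $L\cap X$; it is the norm, which is the usual construction of the image divisor. Its zero scheme $H=V(h)$ is then compared with $L\cap X$ over the open set. The torsion-free part of $\mathcal{O}_{L\cap X}$, pushed forward and saturated, should itself be the structure sheaf of a divisor in $L$. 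The cleaner route I will attempt is to take the pure $(k-1)$-dimensional part $H'$ of $L\cap X$ with its scheme structure (the unmixed part): since $L\cap X$ dominates $\Lambda$ and fibers over general points have length $\ge d$, $H'$ is a Cartier divisor of $L$ of degree equal to the generic fiber length $\ge d$, and $H'\subseteq L\cap X$. That $H'$ is a divisor follows because an unmixed codimension-one subscheme of a smooth variety is Cartier (its ideal is the intersection of primary ideals of height one in a UFD, hence principal). Its degree equals the length of $H'\cap l_x$ for general $x$, which equals $\leng(l_x\cap X)\ge d$, since the lower-dimensional components miss general $l_x$.

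The expected obstacle is exactly this last comparison. Lower-dimensional or embedded components of $L\cap X$ could contribute to the lengths of special lines, but they are avoided by general lines, so using general $x$ suffices. That would complete ($\Leftarrow$), and hence the proposition.
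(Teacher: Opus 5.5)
Your proof is correct and is essentially the paper's argument. The forward direction is the one the paper calls clear. For the converse, you show $\dim(L\cap X)=k-1$, take the $(k-1)$-dimensional (unmixed) part of $L\cap X$, and read off its degree on a general line through $p$ that avoids the lower-dimensional and embedded components. The Cayley--Hamilton/norm detour is unnecessary, and you rightly abandon it; what remains is slightly more careful than the paper's version, since you fix the scheme structure on that part and use the UFD argument to justify that it is a hypersurface contained in $L\cap X$.
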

	\begin{proof}
		We only show the converse as the other direction is clear. Suppose the $d$-ple point locus of the projection $\pi_p:X\to\p^{r-1}$ contains a $(k-1)$-plane $\Lambda$. Then every line in the $k$-plane $L:=\langle p,\Lambda\rangle$ passing through $p$ is a $d$-secant line of $X$. If $\dim (L\cap X)\leq k-2$, then the $d$-secant lines in $L$ through $p$ form a family of dimension at most $k-2$, which is a contradiction. Also $L\cap X\neq L$ since $p\notin X$. Thus $\dim (L\cap X)=k-1$. If the $(k-1)$-dimensional part of $L\cap X$ is a hypersurface of degree $\leq d-1$ in $L$, then not every line in $L$ passing through $p$ is a $d$-secant line of $X$. Therefore $p\in S_{k,d}(X)$.
	\end{proof}

	Then we get a depth formula of projections from \Cref{prop_Skd_characterization}.
	\begin{corollary}\label{cor_depth_formula}
		Let $X\subseteq\p^r$ be a nondegenerate arithmetically Cohen--Macaulay (ACM) projective variety of codimension $\ge 2$ satisfying $\N_{2,2}$. Define $M$ as in \Cref{intro_set-theoretic_eq}, and denote the projection of $X\subseteq\p^r$ from a point $p\in \p^r\backslash X$ as $X_p\subseteq\p^{r-1}$.
		Then we have
		\[
		\depth X_p=r+1-\rank M(p).
		\]
	\end{corollary}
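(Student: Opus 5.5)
The plan is to combine three earlier results: \Cref{prop_Xp_using_secant_locus} (the depth of $X_p$ in terms of $\dim\wt{Z}_2(p,X)$), \Cref{thm_d-ple_linear} (the double point locus is linear), and the rank description of $M(p)$ from the proof of \Cref{thm_equations_of_skd}. Throughout, fix $p\in\p^r\backslash X$. Since $X$ is ACM, $\depth X=n+2$ and $H^i(\mathcal{O}_X(j))=0$ for $1\le i\le n-1$ and all $j$, so the cohomological hypothesis of \Cref{prop_Xp_using_secant_locus} holds. That result then gives
\[
\depth X_p=\min(n+2,\ \dim\wt{Z}_2(p,X)+2).
\]
It therefore remains to express $\dim\wt{Z}_2(p,X)$ through $\rank M(p)$.

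For the second step, recall from the proof of \Cref{thm_equations_of_skd} (with $d=2$) that $\pi_1(\pi_2^{-1}(\varphi(p)))$ is a linear space $L_p\ni p$ of dimension $r-\rank M(p)$. By \Cref{rem_k-plane_maps_to_a_point} and \Cref{prop_fiber_of_lifted_kd_map}, $L_p$ is the maximal linear space through $p$ meeting $X$ in a quadric hypersurface of $L_p$ (when $\dim L_p\ge1$). I will then identify $L_p$ with $\langle p,Z_2(p,X)\rangle$.

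In one direction, every secant line through $p$ is a fiber-contained line, since $\varphi$ restricted to a $2$-secant line through $p$ is given by a one-dimensional system of quadrics, hence is constant. So $\langle p,Z_2\rangle\subseteq L_p$. Conversely, every line through $p$ in $L_p$ meets the quadric $L_p\cap X$ in length $2$, so $\pi_p(L_p)\subseteq Z_2(p,X)$. Hence $\wt{Z}_2(p,X)=L_p\cap X$ is a quadric in $L_p$, consistent with \Cref{thm_d-ple_linear}, and
\[
\dim\wt{Z}_2(p,X)=r-1-\rank M(p).
\]
When $L_p=\{p\}$, both $\wt{Z}_2$ is empty and $\rank M(p)=r$, so the formula also holds with $\dim\varnothing=-1$.

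In the last step I substitute: $\depth X_p=\min(n+2,\ r+1-\rank M(p))$. I must show the minimum is always the second term, i.e. $\dim L_p\le n+1$, equivalently $\dim(L_p\cap X)\le n$. This is clear, since $L_p\cap X$ is a hypersurface in $L_p$ contained in $X$, so $\dim L_p-1\le n$.

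I expect the main care to be needed in the identification $L_p=\langle p,Z_2(p,X)\rangle$, in particular verifying that secant lines through $p$ lie in the fiber even for points of $\tan(X)$ or when $X$ contains lines. For this I will use that $X$ is cut out scheme-theoretically by quadrics and argue via \Cref{rem_fiber_of_lifted_kd_map_without_assumption}: since $p\notin X$, the fiber over $\varphi(p)$ is not contained in $E$. I also note that the codimension $\ge2$ hypothesis is only needed to invoke \Cref{prop_Xp_using_secant_locus}.
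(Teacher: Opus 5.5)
\textbf{Gap in the last step.} Your final step uses the wrong value of $\depth X$. For an ACM variety, $\depth X=\depth S_X=\dim S_X=n+1$, not $n+2$. So \Cref{prop_Xp_using_secant_locus} gives $\depth X_p=\min\bigl(n+1,\ r+1-\rank M(p)\bigr)$. You therefore need $\dim L_p\le n$, i.e.\ $\dim(L_p\cap X)\le n-1$. Your bound $\dim L_p\le n+1$ does not give this.

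The missing case is $\dim L_p=n+1$, and excluding it is exactly where nondegeneracy and $\codim X\ge 2$ are needed. In that case the quadric $L_p\cap X$ is $n$-dimensional, so the irreducible $X$ lies in $L_p$. Nondegeneracy then forces $L_p=\p^r$, i.e.\ $\codim X=1$.

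Your remark that the codimension hypothesis is only needed to invoke \Cref{prop_Xp_using_secant_locus} is incorrect: that proposition has no codimension assumption. The hypothesis is genuinely necessary. For a smooth quadric hypersurface $Q\subseteq\p^r$ there are no linear syzygies, so $\rank M(p)=0$ and the formula would predict $r+1$. But $Q_p=\p^{r-1}$ has depth $r$, and your argument as written does not rule this out. The paper uses the hypothesis at precisely this point: $\depth X_p=\dim\wt{Z}_2(p,X)+2$ ``since $X$ is ACM of codimension $\ge 2$''.

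\textbf{Comparison of routes.} Apart from this, your argument is sound and differs from the paper's. You identify $Z_2(p,X)$ directly with the image under $\pi_p$ of the fiber $L_p=\pi_1(\pi_2^{-1}(\varphi(p)))$. Two facts drive this:
\begin{enumerate}
\item every $2$-secant line through $p$ is contracted by $\varphi$;
\item every line through $p$ in $L_p$ meets the quadric $L_p\cap X$ in length exactly $2$.
\end{enumerate}
This reproves, for $d=2$, that $Z_2(p,X)$ is linear, and gives $\dim\wt{Z}_2(p,X)=r-1-\rank M(p)$ uniformly for all $p\notin X$.

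The paper instead proceeds in three steps:
\begin{enumerate}
\item it cites \Cref{thm_d-ple_linear} for linearity, which requires $p\in(\sigma_2X\cup\tan(X))\backslash X$ and a separate treatment of the case $Z_2=\varnothing$;
\item it uses \Cref{prop_Skd_characterization} to express $\dim Z_2$ through the largest $k$ with $p\in S_{k,2}(X)$;
\item it converts that into $\rank M(p)$ via \Cref{thm_equations_of_skd}.
\end{enumerate}
Your version is more self-contained and avoids the external linearity theorem. The paper's is shorter given the cited results.
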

	\begin{proof}
		By \Cref{prop_Xp_using_secant_locus}, we have
		\begin{align*}
			\depth X_p &=\dim\wt{Z}_2(p,X)+2\\
			&=\dim Z_2(p,X)+2
		\end{align*}
		since $X$ is ACM of codimension $\ge 2$.
		If $p\notin \left(\sigma_2X\cup \tan(X)\right)\backslash X$, then $Z_2(p,X)=\varnothing$ and we get $\depth X_p=1$.
		As $p\notin S_{1,2}(X)\cup X$, we get $\rank M(p)=r$ by \Cref{thm_equations_of_skd}, hence the statement follows.
		Assume $p\in \left(\sigma_2X\cup \tan(X)\right)\backslash X$.
		As $X$ satisfies $\N_{2,2}$, the double point locus $Z_2(p,X)$ is empty or a linear space by \Cref{thm_d-ple_linear}.
		Hence by \Cref{prop_Skd_characterization},
		\[
			\dim Z_2(p,X)=\max\Big(\{0\}\cup\{k\in \z\mid p\in S_{k,2}(X)\}\Big)-1.
		\]
		The right hand side is equal to $r-\rank M(p)-1$ by \Cref{thm_equations_of_skd}. Therefore the result follows.
	\end{proof}
	
	\begin{example}\label{exp_3-minors_of_3x4_generic_matrix_for_general_projection}
		Let $\seg(\p^2\times \p^3)\subseteq\p^{11}$ be the embedded Segre variety and $X:=\sigma_2(\seg(\p^2\times \p^3))$ be its 2-secant variety.
		Then $\dim X=9$ and $\dim X_{\text{sing}}=5$. Also, it has the following Betti table.
		\[\begin{array}{c|ccc}
			&0 &1 &2 \\ \hline
			0&1 & & \\
			1& & & \\
			2& &4 &3 
		\end{array}\]
		From this, we immediately get $\dim S_{8,3}(X)=11$ since if $\dim S_{8,3}(X)\leq 10$, then $\beta_{2,2}(X)\geq 4$ by \Cref{rem_lower_bound_linear_syzygies}.
		Also, since $r=11>9+1+\dfrac{9+1-9}{3-1}$ we have $\dim S_{9,3}(X)=\dim S^*_{9,3}(X)\leq 10$ by \Cref{thm_dim_skd<=r-1}.
		Hence for the general point $p\in\p^{11}\backslash X$ and the projection $\pi_p:X\to \p^{10}$, the triple point locus $Z_3(p,X)$ should contain a $7$-plane and it should not contain an $8$-plane. Also it is a linear space by \Cref{thm_d-ple_linear}. Hence $Z_3(p,X)=\p^7$.
		
		Let $M$ be the matrix obtained as in \Cref{thm_equations_of_skd}.
		Since $M$ has the size $3\times 12$, we have
		\[
			I_4(M)=I_5(M)=\cdots=I_{11}(M)=0.
		\]
		Hence
		\[
			S_{1,3}(X)=S_{2,3}(X)=\cdots=S_{8,3}(X)=\p^{11}.
		\]
		Also we get $I_3(M)\neq 0$ and $V(I_3(M))=X$ so that $S_{9,3}(X)\subseteq X$ using Macaulay2. This implies that the triple point locus $Z_3(p,X)$ of any outer projection is a $7$-plane by \Cref{prop_Skd_characterization}.
	\end{example}
	
	\begin{example}\label{exp_g(14)_for_general_projection}
		The 6-dimensional smooth projective variety $X:=\g(1,4)\subseteq\p^9$ has the following Betti table.
		\[\begin{array}{c|cccc}
			&0 &1 &2 &3 \\ \hline
			0&1 & & & \\
			1& &5 &5 & \\
			2& & & &1
		\end{array}\]
		From this, we get $\dim S_{4,2}(X)=9$ since if $\dim S_{4,2}(X)\leq 8$, then $\beta_{2,1}(X)\geq 6$ by \Cref{rem_lower_bound_linear_syzygies}.
		Furthermore, since $r=9>6+1+\dfrac{6+1-6}{2-1}$, we have $\dim S_{6,2}(X)=\dim S^*_{6,2}(X)\leq 8$ by \Cref{thm_dim_skd<=r-1}.
		Hence for the general point $p\in\p^9\backslash X$ and the projection $\pi_p:X\to \p^8$, the double point locus $Z_2(p,X)$ should contain a $3$-plane and it should not contain a $5$-plane. Also it is a linear space by \Cref{thm_d-ple_linear}. Hence $Z_2(p,X)$ is either a $3$-plane or a $4$-plane.
		
		Let $M$ be the matrix obtained as in \Cref{thm_equations_of_skd}.
		Since $M$ has the size $5\times 10$, we have
		\[
			I_6(M)=\cdots=I_9(M)=0,
		\]
		and one can check $I_5(M)=0$ using Macaulay2.
		Hence
		\[
			S_{1,2}(X)=S_{2,2}(X)=\cdots=S_{5,2}(X)=\p^9.
		\]
		We also get $I_4(M)\neq 0$ and $V(I_4(M))=X$ so that $S_{6,2}(X)\subseteq X$.
		Thus the double point locus $Z_2(p,X)$ of any outer projection is a 4-plane by \Cref{prop_Skd_characterization}.
		Furthermore, by the proof of \Cref{thm_equations_of_skd}, the inclusion $S_{6,2}(X)\subseteq X$ implies $\rank M(p)=4$.
		This implies $\depth X_p=6$ by \Cref{cor_depth_formula}.
	\end{example}

	\begin{example}
		Let $X=S(2,3)\subseteq\p^6=\proj\c[x_0,\dots,x_6]$ be the rational normal scroll defined by $2$-minors of the matrix
		\[\begin{bmatrix}
			x_0 & x_1 & x_3 & x_4 & x_5\\
			x_1 & x_2 & x_4 & x_5 & x_6
		\end{bmatrix}_{\textstyle\raisebox{4pt}.}\]
		Then
		\[
			(\sigma_2X)_\sing=V(x_3,x_4,x_5,x_6)\cup X=S_{2,2}(X)\cup X.
		\]
		Here, the $2$-plane $L:=V(x_3,x_4,x_5,x_6)$ is the linear span of the plane conic $S(2)\subseteq S(2,3)$.
		Hence $L\subseteq S_{2,2}(X)$.
		In general, let $X\subseteq\p^r$ be a nondegenerate smooth projective variety satisfying $\N_{2,2}$. 
		By considering the map from the secant bundle $B$ (see e.g. \cite{MR3436394}) to $\sigma_2X$ which is bijective off $S_{2,2}(X)\cup X$, one might expect
		\[
			(\sigma_2X)_\sing\subseteq S_{2,2}(X)\cup X.
		\]
		However, the equality need not hold in this inclusion.
		For example, the Veronese surface $\nu_2(\p^2)\subseteq\p^5$ satisfies $(\sigma_2(\nu_2\p^2))_\sing=\nu_2\p^2$ while $S_{2,2}(\nu_2\p^2)\cup \nu_2\p^2=\sigma_2(\nu_2\p^2)$.
	\end{example}

	\section{Multisecant lines to the varieties of small codimension}	\label{sec_Multisecant Lines to the Varieties of Codimension Two}
	
	For the codimension two case, it is well studied whether the multisecant lines of a smooth projective variety fill up the ambient projective space as described in the introduction.
	
	If a threefold in \Cref{question:conjecture}(1) exists, it should be found among smooth non-ACM threefolds since, for an ACM projective variety $X\subseteq\p^{n+2}$ of codimension 2, it is known that $(I_X)_{d}=0$ if and only if $S_{1,d+1}(X)=\p^{n+2}$ for all $d\leq n$. The proofs can be found in \cite[Proposition 3.7]{MR1857133} and \cite[Proposition 6.3]{MR2601633}.
	
	However, we found a threefold regular in codimension $1$ whose 4-secant lines do not fill up $\p^5$.
	The following remark was used when we were finding the examples. The examples are constructed via binomial maps between projective spaces.
	
	\begin{remark}\label{rem_determine_probabilistically}
		Although it takes time to compute the minors of $M$, we can determine whether $S_{1,d}(X)$ is equal to $\p^r$ by choosing a point that is a certificate of $S_{1,d}(X)\ne \p^r$. Pick a random point $p\in\p^r$. If $\rank M(p)\geq r$, then $S_{1,d}(X)\neq\p^r$ as $p\notin S_{1,d}(X)\cup X$. Otherwise, if $\rank M(p)\leq r-1$, then $S_{1,d}(X)=\p^r$ with a high probability. This is because if $S_{1,d}(X)\neq\p^r$, then $\dim S_{1,d}(X)\cup X\leq r-1$ so that a randomly selected point in $\p^r$ is rarely contained in $S_{1,d}(X)\cup X$. As computing the rank of a matrix with scalar entries is very fast, one can repeat this process for several points to determine almost surely whether $S_{1,d}(X)$ is equal to $\p^r$ or not.
	\end{remark}
	
	\begin{example}[Threefold $X\subseteq\p^5$ regular in codimension $1$ such that $S_{1,4}(X)\neq \p^5$]\label{example_defective_threefold}
		Let $X\subseteq\p^5$ be the image of the map
		\begin{align*}
			\phi:\proj\c[y_0,\dots,y_3] &\dashrightarrow \proj\c[x_0,\dots,x_5]\\
			[y_0,\dots,y_3] &\mapsto [s_0,\dots,s_5]
		\end{align*}
		where
		\begin{align*}
			s_0&=y_0y_1y_3\\
			s_1&=y_0^2y_2\\
			s_2&=y_0y_2^2\\
			s_3&=y_0^2y_1\\
			s_4&=y_0y_1^2+y_2^2y_3\\
			s_5&=y_0^2y_3+y_2y_3^2.
		\end{align*}
		Then $X$ is a nondegenerate singular threefold of degree $8$ such that $(I_X)_3=0$ with the following Betti table.
		\[\begin{array}{c|cccc}
			&0 &1 &2 &3 \\ \hline
			0&1 & & & \\
			1& & & & \\
			2& & & & \\
			3& &7 &8 &2
		\end{array}\]
		Also, the point $[1,1,1,1,1,1]$ is a certificate of $S_{1,4}(X)\neq\p^5$.
		The set-theoretic equations of the locus $S_{1,4}(X)\cup X$ can be checked in the auxiliary Macaulay2 code.
		The threefold $X$ is regular in codimension $1$ as $X_\sing$ is a union of three lines, but it is not normal.
		We could also check that $\dim(S_{1,4}(X))=4$ and
		\[
			S_{1,4}(X)\cup X=S_{2,4}(X)\cup X.
		\]
		This and \Cref{thm_dim_skd<=r-1} partially explain the reason why $\dim S_{1,4}(X)$ is small in this case.
		\Cref{thm_dim_skd<=r-1} gives
		\[
			\dim S_{2,4}(X')\le 4
		\]
		when a threefold $X'\subseteq\p^5$ has at most isolated singularities.
		However, as $\dim X_{\sing}=1$, this is not a complete explanation.
		We could not compute the radical of these set-theoretic equations.
		Note that $X$ is not ACM since the projective dimension is greater than the codimension.
	\end{example}
	
	When the codimension is greater than two, not many things are known. For example, for $n\geq 2$, no smooth variety $X$ of dimension $2n$ in $\p^{3n+1}$ is known such that $(I_X)_2=0$ and $S_{1,3}(X)\neq\p^{3n+1}$, cf. \cite[p. 575]{MR3024093}. For this case, we found a normal example as follows.
	
	\begin{example}[Normal fourfold $X\subseteq\p^7$ such that $S_{1,3}(X)\neq \p^7$]\label{example_defective_fourfold}
		Let $X\subseteq\p^7$ be the image of the map
		\begin{align*}
			\phi:\proj\c[y_0,\dots,y_4] &\dashrightarrow \proj\c[x_0,\dots,x_7]\\
			[y_0,\dots,y_4] &\mapsto [s_0,\dots,s_7]
		\end{align*}
		where
		\begin{align*}
			s_0&=y_0y_2^2,\\
			s_1&=y_2y_3y_4,\\
			s_2&=y_1^3,\\
			s_3&=y_0y_2y_4,\\
			s_4&=y_1y_2y_3,\\
			s_5&=y_1^2y_2,\\
			s_6&=y_2^2y_4,\\
			s_7&=y_0y_1y_2+y_3^2y_4.
		\end{align*}
		Then $X$ is a nondegenerate normal fourfold of degree $11$ such that $(I_X)_2=0$ with the following Betti table.
		\[\begin{array}{c|cccc}
			&0 &1 &2 &3 \\ \hline
			0&1 & & & \\
			1& & & & \\
			2& &9 &12 &3 \\
			3& & & &1
		\end{array}\]
		Also, the point $[1,1,1,1,1,1,1,1]$ is a certificate of $S_{1,3}(X)\neq\p^7$.
		The set-theoretic equations of the locus $S_{1,3}(X)\cup X$ can be checked in the auxiliary Macaulay2 code which also checks $\dim(S_{1,3}(X))=6$.
		We could not compute the radical of these set-theoretic equations.
		In this case, the locus $X_\sing$ is a union of a line, three $2$-planes, and a hyperquadric lying on a $3$-plane so that $\dim X_{\sing}=2$.
		Note that $X$ is ACM since the projective dimension is equal to $\codim X$.
	\end{example}

	In summary, we conclude the following.
	
	\setcounter{thmalphabetmaintext}{1}
	\begin{coralphabetmaintext}
		There exist the following nondegenerate projective varieties:
		\begin{enumerate}
			\item a threefold $X\subseteq\p^5$ regular in codimension $1$ with $S_{4}(X)\neq \p^5$ and $(I_X)_3=0$,
			\item a normal fourfold $X\subseteq\p^7$ with $S_{3}(X)\neq\p^7$ and $(I_X)_2=0$.
		\end{enumerate}
	\end{coralphabetmaintext}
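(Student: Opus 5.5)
The corollary restates Corollary~\ref{cor:existence}. Its proof consists of certifying the two explicit varieties in Examples~\ref{example_defective_threefold} and~\ref{example_defective_fourfold}. The plan is the same in both cases. First compute the homogeneous ideal $I_X$ of the image of the binomial-type map $\phi$ by elimination, i.e.\ as the kernel of the ring map $x_i\mapsto s_i$. Since $\phi$ is given by forms in $\c[y_0,\dots,y_n]$, the image is irreducible; it is reduced because we take a kernel, so $X$ is a variety. Next read off the Betti table. Nondegeneracy is equivalent to $(I_X)_1=0$. The Betti table shows $(I_X)_3=0$ (resp.\ $(I_X)_2=0$), and it also shows that $I_X$ is generated by quartics (resp.\ cubics) with only linear first syzygies. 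Hence $X$ satisfies $\N_{4,2}$ (resp.\ $\N_{3,2}$), and the minimal generators can be used directly as the $F_i$ in Theorem~\ref{thm_equations_of_skd}, with no desaturation step.

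Second, build the matrix $M$ from the $8$ (resp.\ $12$) linear syzygies as in Theorem~\ref{thm_equations_of_skd}. This gives an $8\times 6$ (resp.\ $12\times 8$) matrix of $d$-forms. Evaluate $M$ at $p=[1,\dots,1]$. By Theorem~\ref{thm_equations_of_skd}, $V(I_r(M))=S_{1,d}(X)\cup X$. So if $\rank M(p)=r$ (that is, $5$, resp.\ $7$), then $p\notin S_{1,d}(X)\cup X$, and therefore $S_{d}(X)=S_{1,d}(X)\neq\p^r$. This is exactly the certificate described in Remark~\ref{rem_determine_probabilistically}. It is an exact rank computation over $\mathbb{Q}$, so no probabilistic argument is needed.

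Third, verify the geometric regularity conditions. For the dimension, check that the Jacobian of $\phi$ has generic rank $n+1$; this gives $\dim X=3$ (resp.\ $4$). For the threefold, compute the singular locus from the Jacobian ideal of $I_X$ via the minors of size $\codim X=2$. Show that its radical defines a union of three lines. Since $\dim X_\sing=1=\dim X-2$, $X$ is regular in codimension $1$. For the fourfold, the same Jacobian computation shows $\dim X_\sing=2=\dim X-2$, which gives $R_1$. For $S_2$, observe that the Betti table has projective dimension $3=\codim X$. So $S_X$ is Cohen--Macaulay, hence $X$ is ACM, and in particular $S_2$ holds. Serre's criterion then gives that $X$ is normal.

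The main obstacle is computational: computing the Jacobian minors and the radical of the singular locus for a degree-$8$ (resp.\ degree-$11$) variety can be expensive. I would first try saturating and decomposing the Jacobian ideal over a finite field. Failing that, I would bound $\dim X_\sing$ directly via a Gröbner basis of $I_X+I_c(\mathrm{Jac})$, since only the dimension is needed. For the fourfold, the normality claim deliberately avoids computing the normalization by going through ACM plus $R_1$.
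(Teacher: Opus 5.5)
Your proposal is correct and follows the paper's route. The corollary is witnessed by the explicit binomial-map examples: the Betti tables give $(I_X)_3=0$ (resp.\ $(I_X)_2=0$) and $\N_{d,2}$, so the minimal generators go directly into \Cref{thm_equations_of_skd}; the exact check $\rank M(p)=r$ at $p=[1,\dots,1]$ certifies $S_d(X)\neq\p^r$; and a Jacobian computation of $X_{\sing}$ gives $R_1$. Your derivation of normality for the fourfold, from $R_1$ together with the ACM property (projective dimension $3=\codim X$, Auslander--Buchsbaum, Serre's criterion), spells out a step the paper only indicates by remarking that this $X$ is ACM.
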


\bibliographystyle{amsalpha}
\bibliography{ref.bib}
\end{document}